\begin{document} 

\newtheorem{theorem}{Theorem}[section]
\newtheorem{proposition}[theorem]{Proposition}
\newtheorem{lemma}[theorem]{Lemma}
\newtheorem{corollary}[theorem]{Corollary}
\newtheorem{definition}[theorem]{Definition}
\newtheorem{conjecture}[theorem]{Conjecture}
\newtheorem{example}[theorem]{Example}
\newtheorem{remark}[theorem]{Remark}

\newcommand{\cl}{\ensuremath{\prec}}
\newcommand{\cle}{\ensuremath{\preccurlyeq}}

\makeatletter
\newlength{\earraycolsep}
\setlength{\earraycolsep}{2pt}
\def\eqnarray{\stepcounter{equation}\let\@currentlabel%
\theequation
\global\@eqnswtrue\m@th
\global\@eqcnt\z@\tabskip\@centering\let\\\@eqncr
$$\halign to\displaywidth\bgroup\@eqnsel\hskip\@centering
$\displaystyle\tabskip\z@{##}$&\global\@eqcnt\@ne
\hskip 2\earraycolsep \hfil$\displaystyle{##}$\hfil
&\global\@eqcnt\tw@ \hskip 2\earraycolsep
$\displaystyle\tabskip\z@{##}$\hfil
\tabskip\@centering&\llap{##}\tabskip\z@\cr}
\makeatother

\title{Multi-triangulations as complexes of star~polygons}
\author{Vincent Pilaud \and Francisco Santos}

\thanks{This paper was written while the first author was visiting the second one via an internship agreement between the ENS and the University of Cantabria. Research of both authors was also funded by grant MTM2005-08618-C02-02 of the Spanish Ministry of Education and Science.}
\maketitle

\begin{abstract}
Maximal $(k+1)$-crossing-free graphs on a planar point set in convex position, that is, $k$-triangulations, have received attention in recent literature, with motivation coming from several interpretations of them.

We introduce a new way of looking at $k$-triangulations, namely as complexes of star polygons.
With this tool we give new, direct, proofs of the fundamental properties of $k$-triangulations, as well as some new results.
This interpretation also opens-up new avenues of research, that we briefly explore in the last section.
\end{abstract}

\section{Introduction}\label{sectionintroduction}

A \emph{multi-triangulation} of order $k$, or \emph{$k$-triangulation} of a convex $n$-gon is a maximal set of edges such that no $k+1$ of them mutually cross. 

\begin{example}
\rm
\label{exm:2k+2}
For $k=1$ these are simply triangulations. For $n\le 2k+1$, the complete graph $K_n$ on $n$ vertices does not contain $k+1$ mutually intersecting edges, and thus it is the unique $k$-triangulation of the $n$-gon. 
So, the first non-trivial case is $n=2k+2$. It is easy to check that there are $k+1$ $k$-triangulations of the $(2k+2)$-gon, each of them obtained from the complete graph $K_{2k+2}$ by deleting one of the diagonals $[i,i+k]$ (see Fig.~\ref{fig:2k+2}). See Lemma~\ref{lemma:2k+3} for a discussion of the next case, $n=2k+3$.

Figure~\ref{fig:2triang8points} shows another example; a $2$-triangulation of an octagon.
\end{example}

\begin{figure}
\centerline{\includegraphics[scale=1]{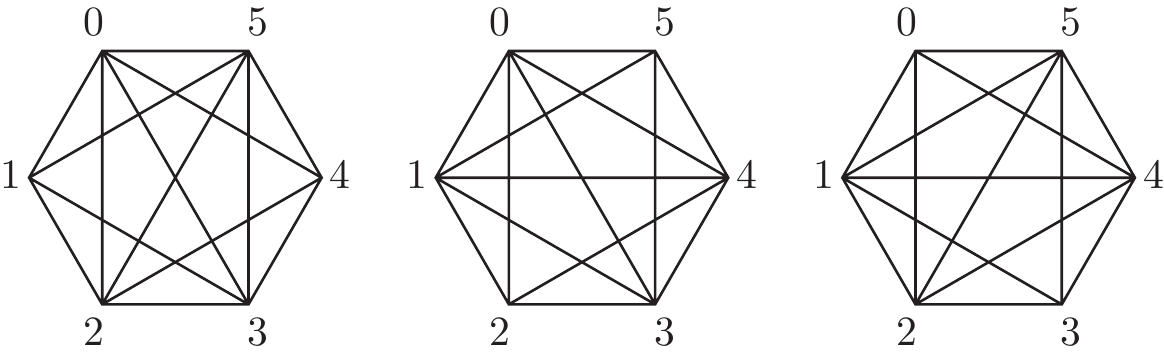}}
\caption{\small{The three $2$-triangulations of the hexagon.}}\label{fig:2k+2}
\end{figure}

\begin{figure}
\centerline{\includegraphics[scale=1]{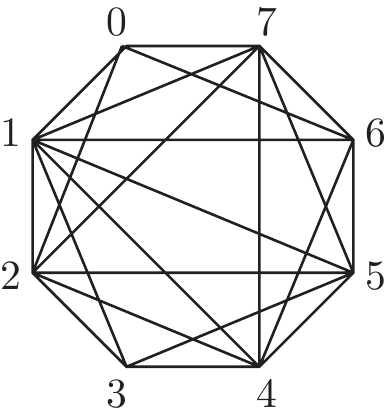}}
\caption{\small{A $2$-triangulation of the octagon.}}\label{fig:2triang8points}
\end{figure}

As far as we know, multi-triangulations first appear in the work of Capoyelas and Pach~\cite{cp-tttccp-92}, who prove that a $k$-triangulation of the $n$-gon cannot have more than $k(2n-2k-1)$ edges. 
Nakamigawa~\cite{n-gdfcp-00}, and independently Dress, Koolen and Moulton~\cite{dkm-lahp-02} then proved that all $k$ triangulations actually have that number of edges (for $n\ge 2k+1$). Both proofs use the concept of \emph{flip} between $k$-triangulations. As the name suggests, a flip creates one $k$-triangulation from another one, removing and inserting a single edge.
In fact, Nakamigawa~\cite{n-gdfcp-00} shows that essentially every edge of a $k$-triangulation can be flipped (our Corollary~\ref{graphflips}). 

The following list summarizes these and other nice properties of $k$-triangulations that have been proved in the literature:

\begin{theorem}[\cite{cp-tttccp-92,dkm-lahp-02, j-gt-03,n-gdfcp-00}]
\label{thm:intro}
\hfill
\begin{enumerate}[(a)]
\item All $k$-triangulations of a convex $n$-gon have the same number of edges, equal to $k(2n-2k-1)$~\cite{cp-tttccp-92,dkm-lahp-02,n-gdfcp-00}.
\item Any edge of length at least $k+1$ can be flipped and the graph of flips is regular and connected~\cite{dkm-lahp-02,n-gdfcp-00}.
\item The set of $k$-triangulations of the $n$-gon is enumerated by the same Catalan determinant counting families of $k$ mutually non-crossing Dyck paths~\cite{j-gt-03,j-gtdfssp-05}.
\item Any $k$-triangulation has at least $2k$ edges of length $k+1$~\cite{n-gdfcp-00} (this is the analogue of ``every triangulation has at least two ears").
\item There exists a ``deletion'' operation that allows one to obtain $k$-triangulations of an $n$-gon from those of an $(n+1)$-gon, and vice-versa~\cite{j-gt-03,n-gdfcp-00}.
\item The simplicial complex whose facets are the $k$-triangulations of an $n$-gon is a vertex-decomposable sphere of dimension $k(n-2k-1)-1$~\cite{j-gt-03}.
\end{enumerate}
\end{theorem}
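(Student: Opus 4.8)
The plan is to deduce the entire theorem from one structural result, which I will call the \emph{star-decomposition theorem}: in any $k$-triangulation $T$ of the convex $n$-gon, the edges of length at least $k$ organize into \emph{$k$-stars}, a $k$-star being a set of $2k+1$ vertices $s_0<s_1<\cdots<s_{2k}$ joined by the $2k+1$ edges $[s_i,s_{i+k}]$ (indices modulo $2k+1$), with each relevant edge lying in a controlled number of stars. Concretely, I would first establish the local incidence structure: edges of length $\le k-1$ lie in no $k$-star, edges of length exactly $k$ (all of which are forced to be present, since short edges cannot enter a $(k+1)$-crossing) lie in exactly one $k$-star, and edges of length $\ge k+1$ lie in exactly two. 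Proving that the edges really close up into these self-intersecting $(2k+1)$-gons, with no defects, is where I expect most of the work to sit; it is the analogue, for star polygons, of the classical fact that every diagonal of a triangulation bounds exactly two triangles. Once this is in hand, the rest of the theorem should follow by counting, local surgery, and induction.

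Given the decomposition, parts (a) and (d) become bookkeeping. A double count of incidences between stars and edges yields $n+2q=(2k+1)S$, where $q$ is the number of edges of length $\ge k+1$, where $S$ is the number of $k$-stars, and where $n$ counts the length-$k$ edges (there are exactly $n$, each in one star). This single relation underdetermines $q$ and $S$, so I would add an independent count of $S$ --- an angle count, or an induction on $n$ via the deletion operation of part (e) --- giving $S=n-2k$; the incidence relation then forces $q=k(n-2k-1)$, and adding the $n(k-1)$ forced edges of length $<k$ and the $n$ edges of length $k$ gives the total $k(2n-2k-1)$ of (a). For (d) I would sharpen the angle count: a length-$(k+1)$ edge is an ``ear'' of a star, and a global counting inequality over the cyclic structure forces at least $2k$ such ears, exactly mirroring the two-ears theorem at $k=1$.

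The flip statement (b) I would realize as local surgery on stars: deleting a relevant edge $e$ merges the (at most two) stars through $e$ into a region that admits precisely one other retiling by $k$-stars, reinserting a unique edge $e'$. This shows every edge of length $\ge k+1$ is flippable, and since the number of flippable edges is the constant $k(n-2k-1)$, the flip graph is regular; connectivity I would obtain by exhibiting flips that monotonically drive any $k$-triangulation toward a fixed canonical one. The deletion operation (e) I would read off directly from the effect on the star complex of removing a polygon vertex: stars through that vertex contract, producing a $k$-triangulation of the $(n-1)$-gon, with an explicit inverse insertion.

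The genuinely separate parts are (c) and (f). For the enumeration (c) I would construct a bijection between $k$-triangulations and families of $k$ pairwise non-crossing monotone lattice paths, reading the path data off the star decomposition (for instance recording the length-$(k+1)$ boundary edges star by star); the count then becomes the Lindstr\"om--Gessel--Viennot determinant, i.e.\ the Catalan determinant, so the delicate point is making the bijection clean and verifying the non-crossing condition. For (f) I would prove the complex of relevant edges vertex-decomposable by induction, using the deletion operation of (e) to peel off a short edge and identifying both the link and the deletion as smaller complexes of the same type; vertex-decomposability yields shellability, and together with the dimension count $k(n-2k-1)-1$ from (a) this gives the sphere. The main obstacle overall is the foundational star-decomposition theorem, on which (a)--(f) all rest as corollaries; among those corollaries, designing the explicit path bijection for (c) and checking the decomposition hypotheses for (f) are the most technical.
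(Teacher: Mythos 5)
Your foundational ``star-decomposition theorem'' is precisely the paper's Theorem~\ref{angle} and Corollary~\ref{incidences}, and for parts (a), (b), (d), (e) your architecture (incidence double count, flip as surgery on the two stars sharing an edge, monotone flips toward a canonical triangulation, deletion as contraction of a star) is the paper's. But your plan omits the one idea the paper cannot do without: the \emph{unique common bisector} theorem (Theorem~\ref{common bisector}), stating that any two $k$-stars in a $(k+1)$-crossing-free graph have exactly one common bisector. This is what supplies the independent determination of the number $S$ of stars: Corollary~\ref{findingstars} gives a bijection between vertices outside a fixed star $S_0$ and the stars other than $S_0$ (send $r$ to the unique star whose common bisector with $S_0$ has $r$ as an endpoint), whence $S=n-2k$. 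Your proposed substitute, an ``angle count'', does not supply it: each $k$-star has $2k+1$ angles and each $k$-relevant angle of $T$ lies in a unique star (Theorem~\ref{angle}), but the number of $k$-relevant angles of $T$ equals $\sum_v \bigl(d(v)-1\bigr)=2(n+q)-n=n+2q$, where $d(v)$ is the number of edges of length at least $k$ at $v$ --- exactly your edge-incidence relation $n+2q=(2k+1)S$ over again, so no new information. (For $k=1$ the independent relation is Euler's formula; there is no planarity to invoke for $k\ge 2$.) Your fallback, induction via the deletion operation of (e), is the classical route the paper is explicitly avoiding, and as you set it up it risks circularity: the paper's own proof that inflation yields a $k$-triangulation invokes Corollary~\ref{starsenumeration}, i.e.\ the edge count itself. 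The common bisector is equally indispensable for (b): it \emph{is} the edge $e'$ your surgery reinserts (Lemma~\ref{commonedge}), and uniqueness of the flip (Lemma~\ref{lemma:flip}) is deduced from it; without it, ``the merged region admits precisely one other retiling'' is an assertion, not an argument.

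The genuinely failing steps are the two parts you add beyond the paper's program. For (c), your route --- reading a family of $k$ mutually non-crossing Dyck paths off the star decomposition and applying Lindstr\"om--Gessel--Viennot --- is a known open problem, not a ``delicate point'': the paper states explicitly that such a bijection is known only for $k=2$~\cite{e-btdp-06}, and quotes Rubey that a completely bijective proof ``remains open'' and ``appears\dots difficult''. The existing proofs of (c) are non-bijective (Jonsson's induction on $n$, or the translation to polyomino fillings combined with invariance of the count under change of shape with fixed content). For (f), the induction you sketch does not have the stated form: the link and the deletion of a vertex of $\Delta_{n,k}$ (i.e.\ of a single $k$-relevant edge of the $n$-gon) are not complexes of the form $\Delta_{m,k}$, and making vertex-decomposability go through is exactly the content of Jonsson's theorem; moreover, shellability plus the dimension count yields only a ball or a sphere --- one also needs that every ridge lies in exactly two facets, which is the flip statement (b). Note that the paper proves neither (c) nor (f); it cites both, so these are precisely the places where your proposal makes new claims, and in both places the proposed argument does not close the gap.
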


In these statements the \emph{length} of an edge $[p_i,p_j]$ between $n$ points $p_1,\dots,p_n$ in convex position and labeled cyclically is defined as $\min\{|j-i|,|i-j|\} \mod n$. Only edges of length greater than $k$ are \emph{relevant}, since the rest cannot be part of a $(k+1)$-crossing (hence they show up in all $k$-triangulations).

It is interesting that $k$-triangulations admit several different interpretations: as particular cyclic split systems~\cite{dkm-2kn-01, dkm-4n-05}; as line arrangements in the hyperbolic plane~\cite{dkm-lahp-02}; and as certain fillings of triangular polyominoes~\cite{j-gtdfssp-05} (see also~\cite{k-gdidcffs-06, r-idsfmp-07}).
Another very close subject is the study of topological $k$-quasi-panar graphs~\cite{at-mneqpg-07}. That is, graphs without $k+1$ mutually intersecting edges, but where the edges are not forced to be straight line segments, they are only required to not intersect one another twice.

\medskip

In this paper we introduce a new way of looking at $k$-triangulations. Namely, as \emph{complexes of star polygons of type $\left\{\frac{2k+1}{k}\right\}$}.
If $p$ and $q$ are two coprime integers, a \emph{star polygon} of type $\{p/q\}$ is a polygon formed by connecting a set $V=\{s_j\;|\; j\in\mathbb{Z}_p\}$ of $p$ points of the unit circle with the set $E=\{[s_{j},s_{j+q}] \;|\; j\in\mathbb{Z}_p\}$ of edges of length $q$ (see~\cite[pp. 36-38]{c-ig-73},~\cite[ pp. 93-95]{c-rp-73} and Fig.~\ref{starpolygons}). The natural generalization of the triangle that is relevant for $k$-triangulations is:

\begin{definition}
A \emph{$k$-star} is a star polygon of type $\left\{\frac{2k+1}{k}\right\}$.
\end{definition}

\begin{figure}[!h]
\centerline{\includegraphics[scale=1]{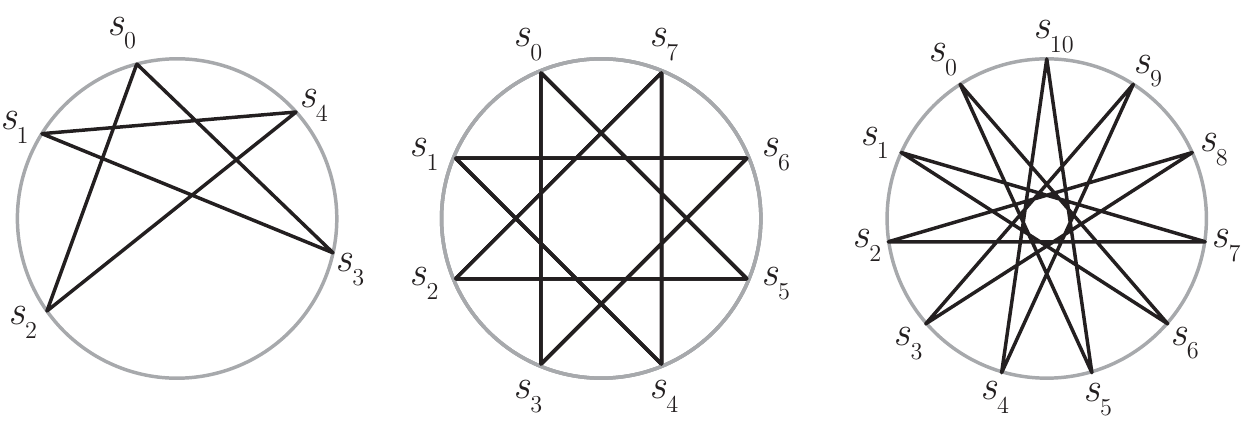}}
\caption{\small{Star polygons of type $\{5/2\}$, $\{8/3\}$ and $\{11/5\}$.
The first is a $2$-star and the last is a $5$-star.
}}\label{starpolygons}
\end{figure}

\newpage

Our main new result is:
\begin{theorem}
\label{thm:main}
Let $T$ be a $k$-triangulation of the $n$-gon (with $n\ge 2k+1$). Then
\begin{enumerate}
\item $T$ contains exactly $n-2k$ $k$-stars (Corollary~\ref{starsenumeration});
\item Each edge of $T$ belongs to zero, one or two $k$-stars, depending on whether its length is smaller, equal or greater than $k$ (Corollary~\ref{incidences});
\item Any common edge $f$ of two $k$-stars $R$ and $S$ of $T$ can be ``flipped" to another edge $e$ so that $T\triangle\{e,f\}$ is a $k$-triangulation. Moreover, the edges $e$ and $f$ depend only on $R\cup S$, not the rest of $T$ (Lemma~\ref{lemma:flip} and Corollary~\ref{graphflips}).
\end{enumerate}
\end{theorem}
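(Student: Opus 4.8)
The plan is to derive part (1) from part (2) by a double count, and to rest both (2) and (3) on a single structural lemma describing the $k$-star(s) through a given edge. Before anything else I would record the elementary combinatorics of one $k$-star $S$ on vertices $s_0,\dots,s_{2k}$ in convex position: each edge $[s_i,s_{i+k}]$ has $k-1$ star-vertices on its shorter side, hence ambient length at least $k$; two star-edges $e_i,e_j$ cross exactly when $i-j\in\{\pm1,\dots,\pm(k-1)\}\pmod{2k+1}$, so their crossing graph is the circulant power $C_{2k+1}^{\,k-1}$, whose maximal cliques are the $2k+1$ windows of $k$ consecutive edges. In particular $S$ realizes $k$-crossings but never a $(k+1)$-crossing, consistent with the fact (Example~\ref{exm:2k+2}) that on $2k+1$ points all of $K_{2k+1}$ is the unique $k$-triangulation.

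For part (1), suppose $T$ contains $N$ $k$-stars and count incident (edge, star) pairs two ways. Each star supplies $2k+1$ edges, so the total is $N(2k+1)$. Summing part~(2) over edges, it equals (number of edges of length $k$) $+\,2\cdot$(number of relevant edges). All $kn$ edges of length at most $k$ lie in every $k$-triangulation, and by part~(a) of Theorem~\ref{thm:intro} the total edge count is $k(2n-2k-1)$; hence there are $k(n-2k-1)$ relevant edges and exactly $n$ edges of length $k$. Thus $N(2k+1)=n+2k(n-2k-1)=(2k+1)(n-2k)$, so $N=n-2k$. (The same count is reversible: establishing (1) directly, e.g.\ by induction on $n$ via the deletion operation, would instead recover the edge count of part~(a).)

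The heart is therefore part (2). The first case is immediate from the length bound: an edge of length $<k$ cannot be a star-edge, so it lies in no $k$-star. For the other cases I would prove the key lemma: for an edge $f=[a,b]\in T$ together with a choice of which ambient arc of $f$ carries the $k-1$ short-side star-vertices, there is \emph{exactly one} $k$-star of $T$ containing $f$ with that orientation. I would obtain existence and uniqueness from the maximality of $T$ by an inductive star-completion, in which the short-side vertices are constrained to the chosen arc and the long-side vertices are forced one at a time, each completing a maximal $k$-crossing with the edges already placed without creating a $(k+1)$-crossing — the higher-$k$ analogue of the unique apex that a boundary edge of a triangulation determines. A length-$k$ edge has exactly $k-1$ vertices on its short arc, so only one orientation is admissible and it lies in one star; a relevant edge admits both orientations, giving two stars with distinct vertex sets, hence exactly two. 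I expect this lemma to be the \textbf{main obstacle}, since its existence-and-uniqueness core is the engine for everything that follows.

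For part (3), let $R,S$ be the two stars sharing $f$, arising from the two orientations. They span at most $4k$ vertices and, by the lemma, their union has a rigid local type; within $R\cup S$ there is a unique edge $e\neq f$ whose insertion into $(R\cup S)\setminus\{f\}$ again produces two $k$-stars (the $k$-analogue of exchanging the diagonals of a quadrilateral), so $e$ and $f$ depend only on $R\cup S$. It remains to check that $T'=T\triangle\{e,f\}$ is a $k$-triangulation, and the residual difficulty here is \emph{locality}: one must show that every $(k+1)$-crossing possibly involving $e$ or $f$ is confined to $R\cup S$, so removing $f$ and adding $e$ neither creates a distant crossing nor disturbs any other star. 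This reduces to the sharp form of the lemma — that the $k$ edges of $T$ crossing $f$ on each side are precisely the star-edges of $R$ and of $S$ adjacent to $f$ — after which the absence of $(k+1)$-crossings in $T'$ follows from the local rigidity of $R\cup S$, and maximality of $T'$ follows because $f$ was supported only by the two local windows destroyed by the flip.
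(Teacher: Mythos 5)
Your overall architecture does parallel the paper's: your ``edge plus choice of short side determines a unique $k$-star'' lemma is essentially the paper's engine, Theorem~\ref{angle} (every $k$-relevant angle of $T$ lies in a unique $k$-star of $T$), and Corollaries~\ref{incidences} and~\ref{starsenumeration} are then obtained by exactly the kind of incidence count you propose. But the proposal has a genuine gap precisely where you flag the ``main obstacle'': the key lemma is never proved. The sketch --- ``long-side vertices are forced one at a time, each completing a maximal $k$-crossing with the edges already placed'' --- does not engage the actual difficulty, which is to show that the forced completion closes up after $2k+1$ steps into a genuine $k$-star (that the greedily determined vertices interleave correctly and the last edge returns to the starting vertex). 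This is where the paper spends its hardest argument: in the proof of Theorem~\ref{angle} it introduces $v$-farther and $v$-\emph{maximal} $(k-1)$-crossings intersecting the angle $\angle(u,v,w)$, proves by two separate maximality-violation arguments that $[u,b_1]\in T$ and that $\angle(a_1,b_1,u)$ is again an angle of $T$ whose associated $(k-1)$-crossing is again maximal, and only then iterates. Without a substitute for this mechanism, parts (2) and (3) have no foundation, and part (1), which you correctly reduce to part (2) by double counting, falls with them. (Also note that your count inputs Theorem~\ref{thm:intro}(a); this is not circular, since (a) is in the cited literature, but the paper's Corollary~\ref{starsenumeration} derives the edge count and the star count simultaneously from the bijections of Corollary~\ref{findingstars}, which is exactly what makes its proof of (a) new and self-contained.)

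Second, the specific structural claims underpinning your part (3) are false, not merely unproven. Neither ``every $(k+1)$-crossing possibly involving $e$ or $f$ is confined to $R\cup S$'' nor ``the $k$ edges of $T$ crossing $f$ on each side are precisely the star-edges of $R$ and of $S$ adjacent to $f$'' holds: $T$ may contain many edges crossing $f$ besides those of $R\cup S$ (they are only constrained not to contain, among themselves, a $k$-crossing), and a $k$-crossing of $T$ all of whose edges cross the bisector $e$ need not use any edge of $R\cup S$ at all. The correct statement, which is what the paper proves (Lemma~\ref{sandwich}), is that any $k$-crossing of $T$ whose edges all cross $e$ has each of its edges \emph{sandwiched} between a corresponding pair of parallel edges $[r_{2i-1},r_{2i}]$ and $[s_{2i-1},s_{2i}]$ of $R$ and $S$; when $f$ is a common edge of $R$ and $S$, one of these sandwich regions degenerates to $f$ itself, so every $k$-crossing preventing $e$ must contain $f$ --- it is this, not locality, that makes $T\triangle\{e,f\}$ $(k+1)$-crossing-free. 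Likewise, the existence and uniqueness of the exchange edge $e$ is asserted by analogy with flipping a diagonal of a quadrilateral; the paper proves it as Theorem~\ref{common bisector} (existence by a parity argument using the odd number $2k+1$ of edges of a star, uniqueness by exhibiting a $(k+1)$-crossing from two putative bisectors), and then identifies the two new stars of $T\triangle\{e,f\}$ explicitly in Lemma~\ref{commonedge}. These are the missing ideas you would need to supply.
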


But $k$-stars in $k$-triangulations are not only interesting ``per se''. Rather, we think they are the right way of looking at $k$-triangulations, in much the same way as triangles, instead of edges, are the right way of looking at triangulations. As evidence for this, in this paper we give new proofs of the basic properties of $k$-triangulations (parts (a), (b), (d) and (e) of Theorem~\ref{thm:intro}). Our proofs are direct and based on simple combinatorial properties of the mutual disposition of $k$-stars in a $(k+1)$-crossing-free graph (see Section~\ref{sectionstars}). In contrast, the way these basic results are proved in~\cite{dkm-lahp-02,j-gt-03,j-gtdfssp-05,n-gdfcp-00} is more indirect:

\begin{enumerate}
\item First, a ``deletion'' operation that relates $k$-triangulations of the $n$-gon and the $(n+1)$-gon is introduced. (We introduce this operation in Section~\ref{sectionflatinflat}).
\item Second, this operation is used to prove that $k$-triangulations admit flips (either in full generality or only for particular cases, as in~\cite{dkm-lahp-02}).
\item Finally, flips are used to show that every $k$-triangulation has the same number of edges as certain special ones constructed explicitly, namely $k(2n-2k-1)$.
\end{enumerate}

The structure of this paper is as follows: After introducing some notation in Section~\ref{sectionnotation} we discuss basic properties of $k$-stars in Section~\ref{sectionstars}. Section~\ref{sectioncomplexes} proves parts (1) and (2) of Theorem~\ref{thm:main}, as well as part (a) of Theorem~\ref{thm:intro}. Section~\ref{sectionflips} finishes the study of fundamental properties of $k$-triangulations, by introducing the graph of flips. We have to mention that another advantage of $k$-stars is that they allow for a much more explicit (and algorithmically better) way of understanding flips.

In Section~\ref{sectionears} we look at a particularly nice and simple family of $k$-triangulations. They are the analogue of ``triangulations with only two ears'' and admit several different characterizations. To emphasize that none of our proofs so far make use of the recursive operation relating $k$-stars of the $n$-gon and the $(n+1)$-gon we only introduce this operation in Section~\ref{sectionflatinflat}. The operation is almost literally the same one as in previous papers, but again it is more natural to look at it as the ``flattening'' (and, conversely, ``inflation'') of a single $k$-star.

Finally, we discuss in Section~\ref{sectionopen} further properties and questions about $k$-triangulations that may hopefully be easier to analyse using $k$-stars and which could be developed further in future papers. Among them:
\begin{itemize}
\item Dyck multi-paths: as mentioned in Theorem~\ref{thm:intro}, $k$-triangula\-tions of the $n$-gon are counted by the same determinant of Catalan numbers that counts certain families of non-crossing Dyck paths~\cite{j-gt-03,j-gtdfssp-05} (see also~\cite{k-gdidcffs-06, r-idsfmp-07}). An explicit bijection between these two combinatorial sets has only been found for $k=2$~\cite{e-btdp-06}.

\item multi-associahedron: we also said that the simplicial complex $\Delta_{n,k}$ whose facets are $k$-trianguations of the $n$-gon is a combinatorial sphere~\cite{j-gt-03}. It is natural to think that this sphere is polytopal, as happens for $k=1$ where it is the polar of the associahedron~\cite{l-at-89}.

\item rigidity: the number of edges of a $k$-triangulation of the $n$-gon is exactly that of a generically minimally rigid graph in dimension $2k$. We conjecture that all $k$-triangulations are minimally rigid in dimension $2k$ and prove it for $k=2$.

\item surfaces: regarding a $k$-triangulation $T$ as a complex of star polygons naturally defines a polygonal complex associated to it. This complex is an orientable surface with boundary. It seems interesting to study the action of flips on this surface. In particular, we can think of the fundamental group of the graph of flips as acting on the mapping class group of the surface.

\item chirotope: there is also a natural chirotope of rank $3$ defined on the set of $k$-stars of a $k$-triangulation. This is the analogue of the chirotopes (or pseudo-line arrangements) that Pocchiola and Vegter introduce on pseudo-triangulations~(\cite{pv-ot-94}, see also~\cite{pv-vc-96}).
\end{itemize}


\section{Notation}\label{sectionnotation}

Let $k$ and $n$ be two integers such that $k\ge 1$ and $n\ge 2k+1$.

Let $V_n$ be the \emph{set of vertices} of a convex $n$-gon, {\it i.e.}~any set of points on the unit circle, labelled counterclockwise by the cyclic set $\mathbb{Z}_n$.
All throughout the paper, we will refer to the points in $V_n$ by their labels to simplify notation.
For $u,v,w\in V_n$, we will write $u\cl v\cl w$ meaning that $u$, $v$ and $w$ are in counterclockwise order on the circle.
For any $u,v\in V_n$, let $\llbracket u,v\rrbracket$ denote the \emph{cyclic interval} $\{w\in V_n\;|\; u\cle w\cle v\}$.
The intervals $\rrbracket u,v\llbracket$, $\llbracket u,v\llbracket$ and $\rrbracket u,v\rrbracket$ are defined similarly.
Let $|u-v|=\min(|\llbracket u, v\llbracket|,|\llbracket v, u\llbracket|)$ be the \emph{cyclic distance} between $u$ and $v$.

For $u\ne v\in V_n$, let $[u,v]$ denote the \emph{edge} connecting the vertices $u$ and $v$. We say that $[u,v]$ is of \emph{length} $|u-v|$.
Let $E_n={V_n \choose 2}$ be the \emph{set of edges} of the complete graph on $V_n$.
Two edges $[u,v]$ and $[u',v']$ are said to \emph{cross} when the open segments $]u,v[$ and $]u',v'[$ intersect.
For $\ell\in\mathbb{N}$, an \emph{$\ell$-crossing} is a set of $\ell$ mutually intersecting edges.

\begin{definition}
A \emph{$k$-triangulation} of the $n$-gon is a maximal $(k+1)$-crossing-free subset of $E_n$.
\end{definition}

Obviously, an edge $[u,v]$ of $E_n$ can appear in a $(k+1)$-crossing only if $|u-v|>k$. We say that such an edge is \emph{$k$-relevant}. We say that an edge $[u,v]$ is a \emph{$k$-boundary} if $|u-v|=k$ and that it is \emph{$k$-irrelevant} if $|u-v|<k$.
Every $k$-triangulation of the $n$-gon consists of all the $kn$ $k$-irrelevant plus $k$-boundary edges and some $k$-relevant edges.

An \emph{angle} $\angle(u,v,w)$ of a subset $E$ of $E_n$ is a pair $\{[u,v],[v,w]\}$ of edges of $E$ such that $u\cl v\cl w$ and for all $t\in\rrbracket w,u\llbracket$, the edge $[v,t]$ is not in $E$. We call $v$ the \emph{vertex} of the angle $\angle(u,v,w)$. If $t\in\rrbracket w,u\llbracket$, we say that \emph{$t$ is contained} in $\angle(u,v,w)$, and that the edge $[v,t]$ is a \emph{bisector} of $\angle(u,v,w)$.
An angle is said to be \emph{$k$-relevant} if both its edges are either $k$-relevant or $k$-boundary edges.

As said in the introduction, a $k$-star is a set of edges of the form $\{[s_j,s_j+k] \;|\; j\in \mathbb{Z}_{2k+1} \}$, where $s_0\cl s_1 \cl \ldots \cl s_{2k} \cl s_0$ are cyclically ordered.
Observe that there are two natural cyclic orders on the vertices of a $k$-star $S$: the \emph{circle order}, defined as the cyclic order around the circle, and the \emph{star order}, defined as the cyclic order tracing the edges of $S$. More precisely, if $s_0,\ldots,s_{2k}$ are the vertices of $S$ cyclically ordered around the circle, we rename the vertices $r_i=s_{iq}$ to obtain the star order $r_0,\ldots,r_{2k}$.


\section{Mutual positions of $k$-stars}\label{sectionstars}

In this section, let $R$ and $S$ denote  two $k$-stars  of a $(k+1)$-crossing-free subset $E$ of $E_n$.
We study their mutual position.

\begin{lemma}\label{starangles}
\begin{enumerate}
\item Any angle of $S$ (or $R$) is also an angle of $E$ and is $k$-relevant.
\item For any vertex $t$ not in $S$, there is a unique angle $\angle(u,v,w)$ in $S$ that is bisected by $[v,t]$.
\end{enumerate}
\end{lemma}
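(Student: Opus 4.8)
The plan is to first pin down the local structure of a single $k$-star and then read off both statements from it. Write the vertices of $S$ in circle order as $s_0\cl s_1\cl\cdots\cl s_{2k}\cl s_0$, so that the edges are $[s_j,s_{j+k}]$ with indices in $\mathbb{Z}_{2k+1}$. The first thing I would record is that every vertex has degree exactly two in $S$: the edges of $S$ through $s_j$ are $[s_j,s_{j+k}]$ and $[s_{j-k},s_j]$, and since $-k\equiv k+1\pmod{2k+1}$ the two neighbours of $s_j$ are $s_{j+k}$ and $s_{j+k+1}$, which are \emph{consecutive} among the vertices of $S$. Hence $S$ has exactly one angle at each vertex, namely $\angle(s_{j+k+1},s_j,s_{j+k})$ (one checks $s_{j+k+1}\cl s_j\cl s_{j+k}$, since the star vertex $s_j$ cannot lie in the gap $\rrbracket s_{j+k},s_{j+k+1}\llbracket$ between two consecutive vertices of $S$). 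The points contained in this angle are precisely those of that gap, and as $j$ runs over $\mathbb{Z}_{2k+1}$ the map $j\mapsto j+k$ is a bijection, so the $2k+1$ angles of $S$ correspond bijectively to the $2k+1$ gaps between consecutive vertices of $S$, each angle containing exactly the points of one gap.

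Granting this, statement (2) is immediate: a vertex $t\notin S$ lies in exactly one gap $\rrbracket s_i,s_{i+1}\llbracket$, which is the set of points contained in exactly one angle, the angle at $s_{i-k}$; this is the unique angle bisected by the edge from its apex to $t$. Existence and uniqueness both come from the disjointness and exhaustiveness of the gaps.

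For statement (1) the $k$-relevance is a length count: the two arcs cut off by a star edge $[s_j,s_{j+k}]$ have the vertices $s_{j+1},\dots,s_{j+k-1}$ and $s_{j+k+1},\dots,s_{j+2k}$ of $S$ in their interiors, so both arcs have length at least $k$ and the edge has length at least $k$; thus both edges of any angle of $S$ are $k$-relevant or $k$-boundary. The real content is that each such angle is also an angle of $E$, i.e.\ that no $t$ contained in it can be joined to the apex by an edge of $E$. Relabelling so that the apex is $s_0$ and fixing $t$ in the gap $\rrbracket s_k,s_{k+1}\llbracket$, I would consider the $k$ edges $e_a=[s_a,s_{a+k}]$ for $a=1,\dots,k$ and verify two crossing facts: (i) each $e_a$ crosses $[s_0,t]$, because the chord $[s_0,t]$ separates $\{s_1,\dots,s_k\}$ from $\{s_{k+1},\dots,s_{2k}\}$ and $e_a$ has one endpoint in each; and (ii) $e_a$ and $e_b$ cross for $a<b$, because the circle order $s_a\cl s_b\cl s_{a+k}\cl s_{b+k}$ makes the two chords interleave. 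Together these show that $\{[s_0,t],e_1,\dots,e_k\}$ is a $(k+1)$-crossing; since $e_1,\dots,e_k\in S\subseteq E$, the edge $[s_0,t]$ cannot lie in $E$ without violating $(k+1)$-crossing-freeness. Hence the angle survives as an angle of $E$.

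The one genuinely geometric step --- and the place where the choice of type $\left\{\frac{2k+1}{k}\right\}$ is actually used --- is exhibiting, in fact~(ii), the $k$ mutually crossing star edges that moreover all cross the bisector; everything else is bookkeeping about the cyclic index shift by $k$. I would therefore present (i) and~(ii) as the core computation, and treat the degree-two/consecutive-neighbour structure and the gap--angle bijection as preliminary observations, since they will be reused throughout the section.
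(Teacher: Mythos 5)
Your proof is correct and takes essentially the same approach as the paper: the key step in both is to exhibit the $(k+1)$-crossing formed by the $k$ star edges not incident to the apex (the unique perfect matching of the remaining $2k$ vertices by edges of $S$) together with the candidate bisector $[v,t]$, and to get $k$-relevance by counting star vertices on either side of a star edge. The only differences are presentational: you index in circle order where the paper uses star order, and you spell out the gap--angle bijection for part (2), which the paper dismisses as immediate from the definition of bisector.
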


\begin{proof}
Let $V=\{s_j\;|\; j\in\mathbb{Z}_{2k+1}\}$ denote the vertices of $S$ in star order.
Suppose that $E$ contains an edge $[s_j,t]$ where $j\in\mathbb{Z}_{2k+1}$ and $t\in\rrbracket s_{j+1},s_{j-1}\llbracket$. Then the set of edges $$\{[s_{j+1},s_{j+2}],[s_{j+3},s_{j+4}],\ldots,[s_{j-2},s_{j-1}],[s_j,t]\}$$ forms a $(k+1)$-crossing. Thus $\angle(s_{j-1},s_j,s_{j+1})$ is an angle of $E$.

Since any edge of $S$ separates the other vertices of $S$ into two parts of size $k-1$ and $k$, it is at least a $k$-boundary. Consequently, the angle $\angle(s_{j-1},s_j,s_{j+1})$ is $k$-relevant.
This finishes the proof of part (1). Part (2) is obvious from the definition of bisector.
\end{proof}

\begin{corollary}
$R$ and $S$ can not share any angle.
\end{corollary}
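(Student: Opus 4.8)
The plan is to argue by contradiction: assume $R\neq S$ share an angle $\angle(u,v,w)$, and produce an edge of one of the two stars that bisects an angle of the other. This contradicts part~(1) of Lemma~\ref{starangles}, which guarantees that every angle of $R$ or $S$ is an angle of $E$ and therefore has no bisector lying in $E$. First I would unwind the hypothesis: if $\angle(u,v,w)$ is an angle of both $R$ and $S$, then $v$ is a common vertex, $u$ and $w$ are precisely the two star-neighbors of $v$ in each star, and the edges $[u,v]$ and $[v,w]$ belong to both. Since a $k$-star is completely determined by its set of vertices, $R\neq S$ forces the vertex sets to differ somewhere beyond $\{u,v,w\}$.

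Next I would trace the two stars outward from the shared edge $[v,w]$, following the star order, and show by induction that they must agree vertex by vertex. Suppose inductively that the two stars have so far traversed the same vertices and arrived at a common vertex $p$ through a common star edge $[p',p]$. The remaining star edge at $p$ leads to a vertex $p_S$ in $S$ and to a vertex $p_R$ in $R$; both are star-neighbors of $p$ distinct from $p'$. If $p_S\neq p_R$, then one of them, say $p_R$, lies strictly inside the mouth of the angle of $S$ at $p$; the edge $[p,p_R]$ then belongs to $R\subseteq E$ and bisects this angle of $S$, contradicting Lemma~\ref{starangles}(1). Hence $p_S=p_R$, the agreement propagates, and going once around the star yields $R=S$, the desired contradiction.

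The step that needs the most care — and the main obstacle — is the geometric claim that the two candidate neighbors $p_S$ and $p_R$ always lie in the same open arc determined by the shared edge $[p',p]$, so that whenever they differ one of them is forced into the other's mouth. This rests on the fact that in any $k$-star the two star-neighbors of a vertex occupy consecutive positions in the circle order of that star's vertices, so that the mouth of the angle at $p$ is exactly the little arc between those two neighbors; it also uses that the shared edge $[p',p]$ is traversed in the same rotational direction in both stars, a property the induction keeps track of. Once this is in place the contradiction is immediate: the mouth of an angle contains no vertex of its own star, so $[p,p_R]$ (or symmetrically $[p,p_S]$) is a genuine bisector present in $E$. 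A more computational alternative would be to exhibit an explicit $(k+1)$-crossing inside $R\cup S$, but the bisector route above keeps all the combinatorics inside Lemma~\ref{starangles}.
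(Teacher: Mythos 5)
Your proof is correct and takes essentially the same route as the paper: the paper also recovers the entire $k$-star from a single shared angle by propagating around it, using Lemma~\ref{starangles}(1) to pin down the next vertex uniquely at each step (the next vertex is the first neighbour in $E$ clockwise after the current edge), so that two stars sharing an angle must coincide. Your bisector contradiction is precisely the reason that next vertex is unique, so the two arguments differ only in presentation.
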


\begin{proof}
By the first point of the previous lemma, the knowledge of one angle $\angle(s_{j-1},s_j,s_{j+1})$ of $S$ permits the recovery of all the $k$-star $S$: the vertex $s_{j+2}$ is the unique vertex such that $\angle(s_j,s_{j+1},s_{j+2})$ is an angle of $E$ ({\it i.e.}~the first neighbour of $s_{j+1}$ after $s_j$ when moving clockwise), and so on.
\end{proof}

Since the number of edges of a $k$-star is $2k+1$, this corollary implies that $R$ and $S$ can not share more than $k$ edges. Note that, for example, the two $k$-stars of any $k$-triangulation of a $(2k+2)$-gon share exactly $k$ edges (see Figure~\ref{fig:2k+2}).

\begin{corollary}
For any edge $[u,v]$ of $E$, the number  of vertices of $S$ between $u$ and $v$ and the number  of vertices of $S$ between $v$ and $u$ are different.
\end{corollary}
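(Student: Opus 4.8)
The plan is to split into cases according to how many of the endpoints $u,v$ are themselves vertices of $S$, and to notice that in most cases a parity argument already forces the two counts apart. Let $a$ and $b$ denote the numbers of vertices of $S$ lying in the open arcs $\rrbracket u,v\llbracket$ and $\rrbracket v,u\llbracket$ respectively. Since $S$ has $2k+1$ vertices, if neither $u$ nor $v$ lies in $S$ then $a+b=2k+1$, and if both lie in $S$ then $a+b=2k-1$. In either case the sum is odd, so $a\ne b$ and there is nothing more to prove.

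The only genuine case is when exactly one endpoint, say $u$, is a vertex of $S$ while $v$ is not (the sub-case $v\in S$, $u\notin S$ being symmetric). Here $a+b=2k$, so the statement could fail only if $a=b=k$, and I would argue by contradiction assuming this. Writing the vertices of $S$ in circle order as $s_0\cl\dots\cl s_{2k}$ with $u=s_j$, a short count shows that having exactly $k$ vertices of $S$ on each side of $[s_j,v]$ forces $v$ to lie in the open arc $\rrbracket s_{j+k},s_{j+k+1}\llbracket$ between the two circle-consecutive vertices $s_{j+k}$ and $s_{j+k+1}$ (indices mod $2k+1$). These two vertices are precisely the star-neighbours of $s_j$, since the two edges of $S$ incident to $s_j$ are $[s_j,s_{j+k}]$ and $[s_j,s_{j-k}]=[s_j,s_{j+k+1}]$. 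Hence $\angle(s_{j+k+1},s_j,s_{j+k})$ is the angle of $S$ at $s_j$, and its empty arc $\rrbracket s_{j+k},s_{j+k+1}\llbracket$ is exactly where $v$ sits.

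At this point I would invoke Lemma~\ref{starangles}(1): this angle of $S$ is also an angle of $E$, which by definition means that no edge $[s_j,t]$ with $t\in\rrbracket s_{j+k},s_{j+k+1}\llbracket$ belongs to $E$. But $v$ lies in that arc and $[u,v]=[s_j,v]\in E$, a contradiction. Alternatively, one can produce the contradiction directly, without quoting the lemma, by checking that $[s_j,v]$ together with the $k$ mutually crossing star edges $[s_{j+1},s_{j+k+1}],\dots,[s_{j+k},s_{j+2k}]$ forms a $(k+1)$-crossing, which is the very construction used in the proof of Lemma~\ref{starangles}.

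The routine parity cases carry no difficulty; the one place that requires care is recognizing that ``$k$ vertices of $S$ on each side'' coincides exactly with $v$ lying inside the angular sector of $S$ at the vertex $u$, after which the conclusion is immediate. The only bookkeeping to watch is the passage between circle and star order and the reduction mod $2k+1$ when locating $s_{j+k}$ and $s_{j+k+1}$.
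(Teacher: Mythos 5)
Your proof is correct and takes essentially the same route as the paper: parity rules out the cases where zero or two endpoints of $[u,v]$ lie in $S$, and in the remaining case the equal split forces $v$ into the arc cut off by the angle of $S$ at $u$, contradicting Lemma~\ref{starangles}(1) (equivalently, producing the explicit $(k+1)$-crossing used in that lemma's proof). The paper's own proof is just a condensed version of this exact argument, so your extra bookkeeping on circle versus star order is the only difference.
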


\begin{proof}
Suppose that $S$ has the same number of vertices on both sides of an edge $f=[u,v]$. Since the number of vertices of $S$ is $2k+1$, one of the two vertices of $f$ is a vertex of $S$, say $u$. Then $v$ is contained in the angle of $S$ of vertex $u$, which implies that $f$ is not in $E$.
\end{proof}

Let $V$ be the set of vertices of the $k$-star $S$.
If $|\llbracket u,v\rrbracket\cap V|<|\llbracket v,u\rrbracket\cap V|$, then we say that $S$ lies on the \emph{positive side} of the oriented edge from $u$ to $v$ (otherwise we say that $S$ lies on the \emph{negative side} of the oriented edge from $u$ to $v$). 
The $k$-star $S$ is said to be \emph{contained in an angle} $\angle(u,v,w)$ of $E$ if it lies on the positive side of both the edges $[u,v]$ and $[v,w]$ oriented from $u$ to $v$ and from $v$ to $w$ respectively.

\begin{lemma}
Let $\angle(u,v,w)$ be an angle of $E$ containing the $k$-star $S$. Then
\begin{enumerate}[(i)]
\item either $v$ is a vertex of $S$ and $\angle(u,v,w)$ is an angle of $S$;
\item or $v$ is not a vertex of $S$ and $\angle(u,v,w)$ has a common bisector with an angle of $S$.
\end{enumerate}
\end{lemma}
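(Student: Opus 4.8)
The plan is to analyze where the vertex $v$ of the angle $\angle(u,v,w)$ sits relative to the $k$-star $S$, and in each case use the containment hypothesis together with Lemma~\ref{starangles} to pin down the desired bisector. Recall that $S$ being contained in $\angle(u,v,w)$ means $S$ lies on the positive side of the oriented edges $u\to v$ and $v\to w$, i.e.\ the majority of the $2k+1$ vertices of $S$ lie in the cyclic interval $\rrbracket v,u\llbracket$ (for the first edge) and in $\rrbracket w,v\llbracket$ (for the second). Since $t\in\rrbracket w,u\llbracket$ for every vertex $t$ contained in the angle, I would first extract from these two ``majority'' conditions the combinatorial constraint that nearly all vertices of $S$ are forced into the region swept out by the angle, namely $\rrbracket w,u\llbracket$, while at most a bounded number can sit outside it.

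First I would handle case (i): suppose $v$ is itself a vertex of $S$. Then by Lemma~\ref{starangles}(1), the angle of $S$ at $v$, say $\angle(s_{j-1},s_j,s_{j+1})$ with $s_j=v$, is an angle of $E$. I would argue that the containment of $S$ in $\angle(u,v,w)$ forces this star-angle at $v$ to coincide with $\angle(u,v,w)$: since $\angle(u,v,w)$ is an angle of $E$, no edge $[v,t]$ with $t\in\rrbracket w,u\llbracket$ lies in $E$, so in particular the two neighbours $s_{j-1},s_{j+1}$ of $v$ in $S$ cannot lie strictly inside the angle. Combined with the positive-side conditions (which place the bulk of $S$ inside $\rrbracket w,u\llbracket$), a short counting argument shows $u=s_{j-1}$ and $w=s_{j+1}$, giving conclusion (i).

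The substantive case is (ii), where $v\notin S$. Here the idea is to apply Lemma~\ref{starangles}(2): because $v$ is not a vertex of $S$, there is a unique angle $\angle(u',v',w')$ of $S$ bisected by some edge $[v',v]$, so $v$ lies strictly inside that star-angle. I would then show that this star-angle of $S$ and the given angle $\angle(u,v,w)$ of $E$ share a common bisector. The natural candidate is the edge $[v,v']$ (or an edge joining $v$ to the relevant vertex of $S$): I must verify both that $[v,v']$ is a bisector of the star-angle — which is exactly the content of part (2) of the earlier lemma — and that it is simultaneously a bisector of $\angle(u,v,w)$, i.e.\ that $v'\in\rrbracket w,u\llbracket$. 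This last containment is where the positive-side hypotheses do the real work: since $S$ sits on the positive side of both $u\to v$ and $v\to w$, the vertices $u',w'$ of $S$ flanking the bisected angle (and hence $v'$) must fall in the interval $\rrbracket w,u\llbracket$ determined by the angle at $v$.

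The main obstacle I anticipate is the careful bookkeeping in case (ii): making precise which vertex of $S$ plays the role of $v'$, and proving that it genuinely lands inside $\rrbracket w,u\llbracket$ rather than straddling $u$ or $w$. The positive-side conditions only guarantee a \emph{majority} of $S$'s vertices on each side, so I would need to rule out boundary configurations where a vertex of $S$ coincides with or lies just outside $u$ or $w$; this likely requires invoking the odd cardinality $2k+1$ of $S$ (so ties are impossible, as already exploited in the corollary about unequal vertex counts on the two sides of an edge) to force a strict inequality. Once that strictness is secured, identifying the common bisector is immediate, so the whole difficulty is concentrated in this one interval-membership verification.
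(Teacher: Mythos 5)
Your case (i) is fine and matches the paper; your case (ii) also starts as the paper does (invoke Lemma~\ref{starangles}(2) to get the unique angle $\angle(u',v',w')$ of $S$ bisected by $[v',v]$, and reduce everything to showing $v'\in\rrbracket w,u\llbracket$). But the one concrete mechanism you offer for that containment is false. You assert that the positive-side hypotheses force the flanking vertices $u',w'$ of the bisected star-angle into $\rrbracket w,u\llbracket$, and deduce $v'\in\rrbracket w,u\llbracket$ from that. The flanking vertices are exactly the two vertices of $S$ that are circle-adjacent to $v$, and nothing prevents one of them from lying inside $\rrbracket u,v\llbracket$ or $\rrbracket v,w\llbracket$. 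Concretely, take $k=2$ and eight points in counterclockwise order $u\cl p_1\cl v\cl w\cl p_2\cl p_3\cl p_4\cl p_5$; let $S$ be the $2$-star on $\{p_1,\dots,p_5\}$ and $E=S\cup\{[u,v],[v,w]\}$. Then $E$ is $3$-crossing-free (the only edges of $S$ crossing $[u,v]$ are the two star edges at $p_1$, which share the vertex $p_1$ and hence do not cross each other), $\angle(u,v,w)$ is an angle of $E$ containing $S$, and the star-angle bisected from $v$ is $\angle(p_2,p_4,p_1)$ at $v'=p_4$, whose flanking vertex $p_1$ lies in $\rrbracket u,v\llbracket$, not in $\rrbracket w,u\llbracket$. (The lemma's conclusion still holds, since $p_4\in\rrbracket w,u\llbracket$, but not for your reason.) Your closing diagnosis also misplaces the difficulty: the odd-cardinality/no-tie observation you propose only addresses the degenerate possibilities $v'=u$ or $v'=w$, not the main case.

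What is actually needed — and what the paper does — is a pigeonhole applied to $v'$ itself, argued by contradiction. Suppose $v'\in\rrbracket u,v\llbracket$. Since $v$ lies in the gap of the star-angle at $v'$, walking counterclockwise from $v'$ one meets the $k$ further vertices of $S$ ending at the flanking vertex just before $v$, all of them before reaching $v$; hence this block of $k+1$ circle-consecutive vertices of $S$ lies entirely in $\rrbracket u,v\llbracket$. But $S$ being on the positive side of the edge $[u,v]$ allows at most $k$ vertices of $S$ in $\llbracket u,v\rrbracket$ — contradiction. Symmetrically, $v'\notin\rrbracket v,w\llbracket$. Finally, $v'=u$ and $v'=w$ are excluded because then $[u,v]$ (resp.\ $[v,w]$), an edge of $E$, would be a bisector of an angle of $S$, contradicting Lemma~\ref{starangles}(1); here your equal-count remark does work as an alternative (if $v'=u$, each closed side of $[u,v]$ contains exactly $k+1$ vertices of $S$, so $S$ is not on the positive side), but it is only this boundary subcase that it handles. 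With these three exclusions, $v'\in\rrbracket w,u\llbracket$ and $[v,v']$ is the desired common bisector.
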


\begin{proof}
Suppose first that $v$ is a vertex of $S$. Let $\angle(x,v,y)$ denote the angle of $S$ at vertex $v$. Since $\angle(u,v,w)$ contains $S$, we have $w\cle y\cl x\cle u$. But since $\angle(u,v,w)$ is an angle of $E$, we have $x=u$ and $y=w$, so that $\angle(u,v,w)$ is an angle of $S$.

Suppose now that $v$ is not a vertex of $S$. Then, by Lemma~\ref{starangles}~(2)
there exists a unique angle $\angle(x,y,z)$ of $S$ containing $v$. If $y\in\rrbracket u,v\llbracket$, then $\rrbracket u,v\llbracket$ contains all the $k+1$ vertices of $S$ between $y$ and $z$, which is not possible (because $S$ lies on the positive side of the edge $[u,v]$, oriented from $u$ to $v$). For the same reason, $y\not\in \rrbracket v,w \llbracket$.
If $y=u$ or $y=w$, then $[u,v]$ or $[v,w]$ is a bisector of $\angle(x,y,z)$, which contradicts Lemma~\ref{starangles}. 
Consequently, $\angle(u,v,w)$ contains $y$, so that $[v,y]$ is a common bisector of $\angle(u,v,w)$ and $\angle(x,y,z)$.
\end{proof}

In the following statement and the rest of the paper, a \emph{bisector} of $S$ is a bisector of an angle of $S$.

\begin{theorem}\label{common bisector}
Every pair of $k$-stars whose union is $(k+1)$-crossing-free have a unique common bisector.
\end{theorem}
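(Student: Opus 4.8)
The plan is to realise a common bisector as the edge produced by the previous lemma, applied to a well-chosen angle of $R$, and then to show that exactly one such edge can occur. Recall that a common bisector of $R$ and $S$ is an edge $[v,y]$ that simultaneously bisects an angle of $R$ (so that $v$ is a vertex of $R$) and an angle of $S$ (so that $y$ is a vertex of $S$). The previous lemma hands us exactly such an edge as soon as we produce an angle $\angle(u,v,w)$ of $R$ that \emph{contains} $S$: indeed, by the corollary asserting that $R$ and $S$ share no angle, case (i) of that lemma is impossible, so we land in case (ii) and obtain the common bisector $[v,y]$, where $\angle(x,y,z)$ is the angle of $S$ containing $v$. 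Thus existence reduces to the single statement that there is an angle of $R$ containing $S$, and the previous lemma moreover attaches to each such angle exactly one candidate bisector.

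To locate such an angle, I would use that, by the corollary on unequal sides, $S$ lies strictly on one side of every edge of $R$, so each edge of $R$ can be oriented towards the strict majority of the vertices of $S$. Writing $w_m=|\,\rrbracket a_m,a_{m+1}\llbracket\cap V\,|$ for the number of vertices of $S$ in the consecutive gaps of $R$ (these sum to $2k+1$), one checks directly from the definition of ``contained in an angle'' that the angle of $R$ at a vertex $v$ contains $S$ precisely when the two edges of $R$ incident to $v$ both point towards $S$ in the appropriate sense. Encoding this through the partial sums of the $w_m$ read along the star order of $R$, the condition for a fixed angle becomes a single inequality comparing three consecutive partial sums, namely that the partial-sum word be non-increasing across that vertex. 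The theorem is then equivalent to the purely combinatorial assertion that this cyclic word of partial sums has \emph{exactly one} such point.

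The hard part will be establishing this ``exactly one'', and it is here that the $(k+1)$-crossing-free hypothesis must genuinely be used: counting the $w_m$ alone controls neither existence nor uniqueness. For instance, for $k=1$ the distribution placing one vertex of $S$ in each gap of $R$ makes all three angles of $R$ ``contain'' $S$, while other counts admit none; these are exactly the configurations in which edges of $R$ and $S$ cross. I would therefore prove the count by contradiction in both directions. If two distinct angles of $R$ contained $S$, I would combine the two resulting bisectors with alternate edges of $R$ and of $S$ — in the same spirit as the explicit $(k+1)$-crossing built in the proof of Lemma~\ref{starangles} — into a family of $k+1$ pairwise crossing edges of $R\cup S$, contradicting the hypothesis; a parallel argument rules out the case of none. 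Producing these crossings explicitly, and verifying that the candidate bisectors are forced to interleave with the star edges so as to span them, is the main obstacle. Once the angle of $R$ containing $S$ is shown to be unique, the previous lemma attaches to it a single bisector $[v,y]$, and the symmetric roles of $R$ and $S$ confirm that $[v,y]$ is the unique common bisector.
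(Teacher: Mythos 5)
There is a genuine gap here, in fact two. First, your reduction of uniqueness is not valid: you claim the theorem is equivalent to there being exactly one angle of $R$ containing $S$, but a common bisector $[v,y]$ only requires that $y$ lie inside the angle of $R$ at $v$ and that $v$ lie inside the angle of $S$ at $y$ --- it does \emph{not} require the whole star $S$ to be contained in the angle of $R$ at $v$. So even a complete proof that exactly one angle of $R$ contains $S$ would leave open the possibility of a second common bisector emanating from a vertex of $R$ whose angle does not contain $S$; your closing appeal to ``the symmetric roles of $R$ and $S$'' does not address this. The paper's uniqueness argument sidesteps the issue entirely by working directly with two putative common bisectors: labelling the star orders so that $e=[r_0,s_0]$ and $f=[r_a,s_b]$, it observes that $a\ne 0\ne b$ and that $a$ and $b$ have the same parity, writes $a=2\alpha$, $b=2\beta$ with $1\le\beta\le\alpha\le k$, and exhibits the explicit $(k+1+\alpha-\beta)$-crossing
$$\{[r_{2i},r_{2i+1}]\;|\;0\le i\le \alpha-1\}\cup\{[s_{2j},s_{2j+1}]\;|\;\beta\le j\le k\}.$$
This parity normalization and explicit crossing are the heart of the proof, and nothing in your sketch identifies them.

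Second, the steps you yourself flag as ``the main obstacle'' are precisely where the content lies, and your calibration of where $(k+1)$-crossing-freeness is genuinely needed is off. For existence the paper needs no crossing construction at all: $S$ cannot lie on the negative side of two consecutive edges $[r_{j-1},r_j]$ and $[r_j,r_{j+1}]$ of $R$ (essentially because the arcs $\llbracket r_{j-1},r_j\rrbracket$ and $\llbracket r_j,r_{j+1}\rrbracket$ meet in a single point and would each have to contain at least $k+1$ vertices of $S$), and since $2k+1$ is odd this forces two consecutive positive sides, i.e.\ an angle of $R$ containing $S$. That is exactly the counting-plus-parity argument you dismiss as insufficient; your $k=1$ interleaved example shows only that counting fails for \emph{uniqueness} (and, incidentally, for $k=1$ the non-interleaved distributions each admit exactly one containing angle, not ``none''). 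As written, your proposal therefore establishes neither half of the theorem: existence is asserted modulo an unproved combinatorial claim whose easy parity resolution you did not find, and uniqueness rests on an unjustified equivalence plus a crossing construction you never produce.
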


\begin{proof}
Let $\{r_j\;|\; j\in\mathbb{Z}_{2k+1}\}$ denote the vertices of $R$ in star order. Note that for any $j\in\mathbb{Z}_{2k+1}$, if $S$ lies on the negative side of the edge $[r_{j-1},r_j]$ oriented from $r_{j-1}$ to $r_j$, then it lies on the positive side of the edge $[r_j,r_{j+1}]$ oriented from $r_j$ to $r_{j+1}$. Since $2k+1$ is odd, this implies that there exists $j\in\mathbb{Z}_{2k+1}$ such that $S$ lies on the positive side of both the edges $[r_{j-1},r_j]$ and $[r_j,r_{j+1}]$ oriented from $r_{j-1}$ to $r_j$ and from $r_j$ to $r_{j+1}$, respectively. That is, in the angle $\angle(r_{j-1},r_j,r_{j+1})$.The previous lemma thus ensures the existence of a common bisector.

Suppose now that we have two different common bisectors $e$ and $f$ of $R$ and $S$, and let $\{r_j\;|\; j\in\mathbb{Z}_{2k+1}\}$ and $\{s_j\;|\; j\in\mathbb{Z}_{2k+1}\}$ denote the vertices of $R$ and $S$ in star order labeled so that $e=[r_0,s_0]$. Let $a,b\in\mathbb{Z}_{2k+1}$ such that $f=[r_a,s_b]$.
Note that certainly, $a\ne0$, $b\ne0$, and $a$ and $b$ have the same parity. By symmetry, we can assume that $a=2\alpha$, $b=2\beta$ with $1\le\beta\le\alpha\le k$. But then the set
$$\{[r_{2i},r_{2i+1}]\;|\; 0\le i\le \alpha-1\}\cup\{[s_{2j},s_{2j+1}]\;|\; \beta\le j\le k\}$$
forms a $(k+1+\alpha-\beta)$-crossing, and $k+1+\alpha-\beta\ge k+1$. This proves uniqueness.\end{proof}

\begin{figure}
\centerline{\includegraphics[scale=1]{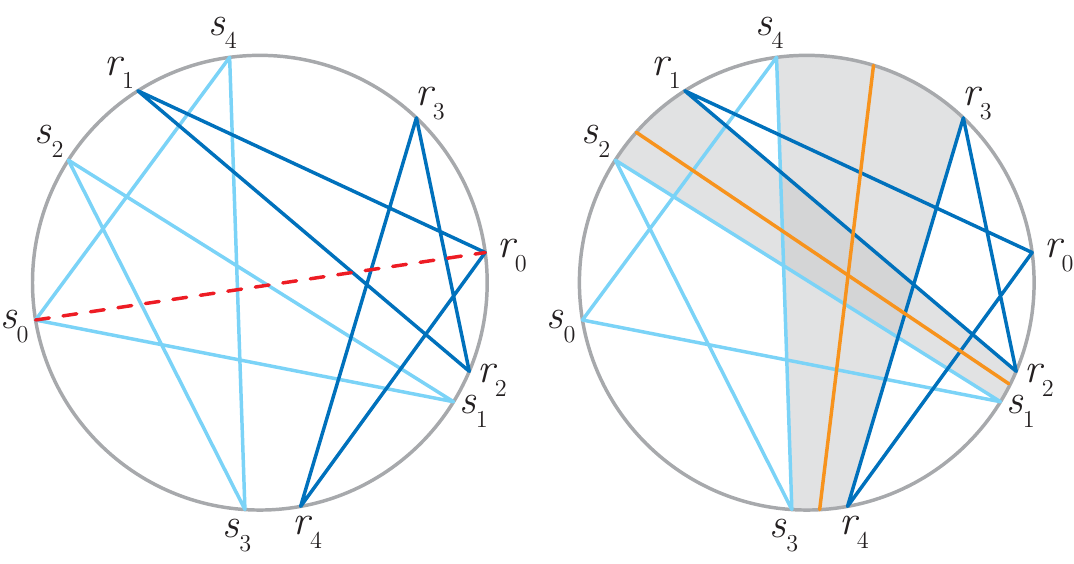}}
\caption{\small{The common bisector of two $2$-stars (left) and a $2$-crossing that crosses it (right).}}\label{combisector}
\end{figure}

In the following lemmas, $\{r_j\;|\; j\in\mathbb{Z}_{2k+1}\}$ and $\{s_j\;|\; j\in\mathbb{Z}_{2k+1}\}$ denote the vertices of $R$ and $S$, in star order, and with $e=[r_0,s_0]$ being the common bisector of $R$ and $S$.

\begin{lemma}\label{paralleledges}
For every $1\le i\le k$, $r_{2i-1}\in\rrbracket r_0,s_{2i}\rrbracket$ and $s_{2i-1}\in\rrbracket s_0,r_{2i}\rrbracket$.
In particular, for every $i\in\mathbb{Z}_{2k+1}$ the edges $[r_i,r_{i+1}]$ and $[s_i,s_{i+1}]$ do not cross.
\end{lemma}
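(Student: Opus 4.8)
The plan is to reduce both containments to the cyclic positions of the vertices of $R$ and $S$, which are essentially forced by the common bisector $e=[r_0,s_0]$. First I would translate the bisecting condition into circle order. Since $e$ bisects the angle of $R$ at $r_0$, namely $\angle(r_{2k},r_0,r_1)$, the vertex $s_0$ must lie in its contained region, i.e.\ in the open arc bounded by the two circle-consecutive vertices $r_1$ and $r_{2k}$ of $R$ and not containing $r_0$. Writing $\rho_0\cl\cdots\cl\rho_{2k}$ for the vertices of $R$ in circle order and using the star order $r_i=\rho_{ik}$, a short index computation then shows that the odd-index vertices $r_1,r_3,\dots,r_{2k-1}$ are exactly the vertices of $R$ lying in $\rrbracket r_0,s_0\llbracket$, appearing there in the cyclic order $r_{2k-1}\cl\cdots\cl r_3\cl r_1\cl s_0$, while $r_2,r_4,\dots,r_{2k}$ occupy the complementary arc $\rrbracket s_0,r_0\llbracket$. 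The symmetric statement for $S$ puts $s_2,s_4,\dots,s_{2k}$ in $\rrbracket r_0,s_0\llbracket$ (in the order $s_{2k}\cl\cdots\cl s_2\cl s_0$) and $s_1,\dots,s_{2k-1}$ in $\rrbracket s_0,r_0\llbracket$.

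With the positions fixed, both $r_{2i-1}$ and $s_{2i}$ lie in $\rrbracket r_0,s_0\llbracket$, and the first containment is exactly the assertion that $r_{2i-1}\cle s_{2i}$ in the cyclic order starting at $r_0$; the second is its mirror in the other arc. I would prove these by contradiction. If $s_{2i}\cl r_{2i-1}$ for some $1\le i\le k$, I would assemble the $i$ edges $[r_{2j},r_{2j+1}]$ with $0\le j\le i-1$ of $R$ together with the $k-i+1$ edges $[s_{2j},s_{2j+1}]$ with $i\le j\le k$ of $S$. This is precisely the edge family appearing in the uniqueness part of Theorem~\ref{common bisector}, specialised to $\alpha=\beta=i$, where it is shown to be a $(k+1)$-crossing. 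One then checks that the assumed order violation forces all the cross-star pairs to interleave, so that the whole family is a $(k+1)$-crossing, contradicting that $R\cup S$ is $(k+1)$-crossing-free.

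Finally, the ``in particular'' follows from the two containments. For each $i$ exactly one endpoint of $[r_i,r_{i+1}]$ and one of $[s_i,s_{i+1}]$ lies in $\rrbracket r_0,s_0\llbracket$, the other in $\rrbracket s_0,r_0\llbracket$, so both chords cross $e$. Feeding the inequalities $r_{2i'-1}\cle s_{2i'}$ (first containment) and $s_{2i'-1}\cle r_{2i'}$ (second containment) into the cyclic order shows that the four endpoints appear in the nested pattern $r_{2i'-1},\,s_{2i'},\,\dots,\,s_{2i'-1},\,r_{2i'}$, so the two chords are nested rather than interleaved and hence do not cross. The two parities of $i$ are handled symmetrically.

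The main obstacle is the middle step: verifying that a single violation $s_{2i}\cl r_{2i-1}$ really does make every selected edge of $R$ cross every selected edge of $S$, so that the count reaches exactly $k+1$. The within-star crossings are immediate, being the pairwise crossings of at most $k$ consecutive even-indexed edges of a single $k$-star already exploited in Theorem~\ref{common bisector}; but the cross-star crossings require tracking, for each pair $j<i\le j'$, on which arc the four endpoints $r_{2j},r_{2j+1},s_{2j'},s_{2j'+1}$ fall and in what order. This is exactly the bookkeeping that the position analysis of the first step is designed to make routine.
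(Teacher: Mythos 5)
Your opening position analysis (odd-index vertices of $R$ and even-index vertices of $S$ in $\rrbracket r_0,s_0\llbracket$, in decreasing index order, and the complementary picture in $\rrbracket s_0,r_0\llbracket$) is correct, and so is your derivation of the ``in particular'' statement from the two containments. The gap is in the middle step, and it is genuine: you take an \emph{arbitrary} violating index $i$ and claim that the single relation $s_{2i}\cl r_{2i-1}$ forces every cross-star pair of your family to interleave. It does not. The violation at $i$ only controls the arc $\rrbracket r_0,s_0\llbracket$, where it gives $s_{2j'}\cle s_{2i}\cl r_{2i-1}\cle r_{2j+1}$; but for a pair $[r_{2j},r_{2j+1}]$, $[s_{2j'},s_{2j'+1}]$ with $j\ge1$ and $j'\le k-1$ to cross you also need $s_{2j'+1}\cl r_{2j}$ in the \emph{opposite} arc $\rrbracket s_0,r_0\llbracket$, and the bisector property leaves the interleaving of the points $s_{2j'+1}$ and $r_{2j}$ in that arc completely undetermined. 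Concretely, for $k=3$ take the circle order
$$r_0\cl r_5\cl s_6\cl s_4\cl r_3\cl r_1\cl s_2\cl s_0\cl r_6\cl r_4\cl r_2\cl s_5\cl s_3\cl s_1.$$
Here $R$ and $S$ are honest $3$-stars and $[r_0,s_0]$ bisects an angle of each, and $s_4\cl r_3$ is a violation at $i=2$ (the only one of its type); yet in your family $\{[r_0,r_1],[r_2,r_3],[s_4,s_5],[s_6,s_0]\}$ the edges $[r_2,r_3]$ and $[s_4,s_5]$ are nested rather than crossing, because $r_2\cl s_5$ in the second arc. So ``one then checks that \dots all the cross-star pairs interleave'' is false as stated, and you cannot import the crossing claim from the uniqueness part of Theorem~\ref{common bisector} either, since there it is deduced from a different hypothesis (two common bisectors), not from an order violation.

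The missing idea is exactly the paper's one move: let $\gamma$ be the \emph{largest} index at which a violation of \emph{either} type occurs, i.e.\ $r_{2\gamma-1}\in\rrbracket s_{2\gamma},s_0\llbracket$ or $s_{2\gamma-1}\in\rrbracket r_{2\gamma},r_0\llbracket$, and assume by symmetry it is the first type. Maximality says there is no violation at index $\gamma+1$, which supplies precisely the inequality you are missing in the opposite arc, namely $s_0\cl s_{2\gamma+1}\cle r_{2\gamma+2}\cl r_{2\gamma-2}$; combined with the monotone orders inside each arc this makes every cross-star pair of the family interleave, so it really is a $(k+1)$-crossing. Note that maximality must be taken over both types of violation jointly: in the configuration above, $i=2$ is maximal among first-type violations, but second-type violations occur up to $i=3$, and that is exactly what breaks your family. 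With this change of index your argument becomes the paper's proof.
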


\begin{proof}
Suppose that there exists $1\le i\le k$ such that $r_{2i-1}\in\rrbracket s_{2i},s_0\llbracket$ or $s_{2i-1}\in\rrbracket r_{2i},r_0\llbracket$. Let $\gamma$ be the highest such integer, and assume for example that $r_{2\gamma-1}\in\rrbracket s_{2\gamma},s_0\llbracket$. Then the definition of $\gamma$ ensures that
$s_0\cl s_{2\gamma+1}\cle r_{2\gamma+2}\cl r_{2\gamma-2}$
so that the set
$$\{[r_{2i},r_{2i+1}]\;|\; 0\le i\le\gamma-1\}\cup\{[s_{2j},s_{2j+1}]\;|\; \gamma\le j\le k\}$$
forms a $(k+1)$-crossing.
\end{proof}

The previous lemma can be read as saying that corresponding edges of $R$ and $S$ are parallel. It is easy to see that $k$ of these $2k+1$ pairs of parallel edges, the ones of the form $([r_{2i-1},r_{2i}],[s_{2i-1},s_{2i}])$, with $1\le i\le k$, separate $R$ from $S$, meaning that $R$ and $S$ lie on opposite sides of both. The next lemma says that any $k$-crossing that, in turn, crosses the common bissector $e=[r_0,s_0]$ has one edge parallel to and between each such pair (see Fig.~\ref{combisector}).

\begin{lemma}\label{sandwich}
Let $F$ be a $k$-crossing of $E$ such that all its edges cross $e=[r_0,s_0]$. Let $f_1=[x_1,y_1],\ldots,f_k=[x_k,y_k]$ denote the edges of $F$, with $r_0\cl x_1\cl \ldots\cl x_k\cl s_0\cl y_1\cl \ldots\cl y_k\cl r_0$.
Then for any $1\le i\le k$, we have $x_i\in\llbracket r_{2k-2i+1},s_{2k-2i+2}\rrbracket$ and $y_i\in\llbracket s_{2k-2i+1},r_{2k-2i+2}\rrbracket$.
\end{lemma}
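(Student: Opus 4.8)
The plan is to reduce the statement to the structural lemmas already established, in particular Lemma~\ref{paralleledges} and its geometric reading. We are given a $k$-crossing $F$ whose $k$ edges $f_1,\dots,f_k$ all cross the common bisector $e=[r_0,s_0]$, with their endpoints ordered as $r_0\cl x_1\cl\dots\cl x_k\cl s_0\cl y_1\cl\dots\cl y_k\cl r_0$. The claim is that the endpoint $x_i$ is ``sandwiched'' between the parallel pair $[r_{2k-2i+1},r_{2k-2i+2}]$ and $[s_{2k-2i+1},s_{2k-2i+2}]$, and symmetrically for $y_i$. The key picture is the one described just before the lemma: the $k$ pairs of parallel edges $([r_{2i-1},r_{2i}],[s_{2i-1},s_{2i}])$ genuinely separate $R$ from $S$, and the $k$ edges of $F$ must thread through them in order.

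\textbf{Main strategy: a crossing count forcing each inclusion.}
First I would fix an index $i$ and suppose, for contradiction, that $x_i\notin\llbracket r_{2k-2i+1},s_{2k-2i+2}\rrbracket$; that is, $x_i$ lies strictly on the ``$R$-side'' or the ``$S$-side'' of this cyclic interval. In either case I would exhibit a $(k+1)$-crossing, contradicting the hypothesis that $E$ is $(k+1)$-crossing-free. The crossings I would use are of exactly the type appearing in the proofs of Theorem~\ref{common bisector} and Lemma~\ref{paralleledges}: namely, a maximal set of mutually crossing ``short'' edges $[r_{2j},r_{2j+1}]$ of $R$ together with edges $[s_{2\ell},s_{2\ell+1}]$ of $S$, now augmented by one or more of the edges $f_1,\dots,f_k$ of $F$. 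Concretely, the edges $f_1,\dots,f_i$ (those whose first endpoint is at most $x_i$ in the circle order) all cross $e$ and mutually cross; I would combine them with the lower-indexed separating edges of $R$ and the higher-indexed separating edges of $S$ to build a crossing one larger than allowed whenever $x_i$ falls outside its prescribed interval. The parity bookkeeping $2k-2i+1$, $2k-2i+2$ is precisely what makes the endpoints of $F$ interleave correctly with the star-order vertices.

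\textbf{Organizing the two boundary violations.}
There are two ways the inclusion $x_i\in\llbracket r_{2k-2i+1},s_{2k-2i+2}\rrbracket$ can fail: $x_i$ could precede $r_{2k-2i+1}$ or follow $s_{2k-2i+2}$ in the relevant arc. I would handle these as two symmetric cases. In the first, $x_i$ lies too far toward $r_0$, so the edge $f_i$ together with the separating edges $[r_{2j-1},r_{2j}]$ for the appropriate range of $j$ and $[s_{2\ell-1},s_{2\ell}]$ for the complementary range overshoots $k+1$; in the second, $x_i$ lies too far toward $s_0$, and a symmetric family does the same. The symmetry between the $x_i$ and $y_i$ statements is exactly the $R\leftrightarrow S$ symmetry that swaps $r$ and $s$ and reverses the circle orientation, so once the statement for $x_i$ is proved, the statement for $y_i$ follows by this symmetry and needs no separate argument.

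\textbf{Expected obstacle.}
The routine part is the crossing count once the right family of edges is written down; the delicate part is getting the indices right, since everything is stated in \emph{star order} while the hypotheses on $x_i,y_i$ are in \emph{circle order}, and the two are related by the relabeling $r_i=s_{iq}$ with $q=k$. The main obstacle, then, is verifying that the chosen edges are genuinely pairwise crossing and that their count is $k+1$ exactly when $x_i$ violates its interval — this requires carefully tracking how Lemma~\ref{paralleledges} places $r_{2i-1}\in\rrbracket r_0,s_{2i}\rrbracket$ (and the mirror statement) so that the separating pairs are correctly nested around the endpoints of $F$. I expect the cleanest route is to phrase the whole argument as a single monotonicity/interleaving claim about where the $x_i$ can lie given that $F$ is a $k$-crossing crossing $e$, deriving both the lower and upper bounds simultaneously from the one forbidden $(k+1)$-crossing configuration.
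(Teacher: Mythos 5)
Your overall strategy (argue by contradiction and exhibit a $(k+1)$-crossing built from edges of $F$ together with star edges) is the right spirit, but the concrete construction you propose does not work, and the missing idea is exactly the one the paper's proof is organized around. You fix a single violating index $i$ with $r_0\cl x_i\cl r_{2k-2i+1}$ and claim that $f_1,\ldots,f_i$ together with low-indexed separating edges of $R$ (and high-indexed ones of $S$) form a $(k+1)$-crossing. But the violation hypothesis only constrains the endpoints $x_1,\ldots,x_i$; the opposite endpoints $y_1,\ldots,y_i$ may lie anywhere in $\rrbracket s_0,r_0\llbracket$. Each candidate star edge $[r_{2m},r_{2m+1}]$ with $m\ge 1$ has one endpoint in $\rrbracket r_0,s_0\llbracket$ and one in $\rrbracket s_0,r_0\llbracket$, so whether $f_j$ crosses it depends on where $y_j$ sits: if $r_{2m}\cle y_j\cl r_0$, then $f_j$ and $[r_{2m},r_{2m+1}]$ do \emph{not} cross and no contradiction arises. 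In fact, under your hypothesis the only star edge guaranteed to be crossed by all of $f_1,\ldots,f_i$ is $[r_0,r_1]$ (its second endpoint is $r_0$ itself, so the position of $y_j$ is irrelevant), and hence your direct count reaches $k+1$ only in the extreme case $i=k$. This is precisely the ``main obstacle'' you flag at the end of your proposal, and the proposal does not resolve it; mixing in edges of $S$ does not help, since those edges suffer from the same dependence on the uncontrolled endpoints.

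The paper resolves this with a dichotomy anchored at the \emph{maximal} violating index $\ell=\max\{1\le i\le k\;|\;r_0\cl x_i\cl r_{2k-2i+1}\}$. If $\ell=k$, then $\{f_1,\ldots,f_k,[r_0,r_1]\}$ is already a $(k+1)$-crossing. If $\ell<k$, consider the $(k+1)$-element family $\{f_1,\ldots,f_\ell\}\cup\{[r_0,r_1],\ldots,[r_{2k-2\ell},r_{2k-2\ell+1}]\}$: either it is a $(k+1)$-crossing (contradiction), or its failure to be one \emph{locates an uncontrolled endpoint}, forcing $r_{2k-2\ell}\cle y_\ell\cl r_0$ and hence $r_{2k-2\ell}\cl y_{\ell+1}\cl\ldots\cl y_k\cl r_0$. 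Combined with the maximality of $\ell$, which pins $r_{2k-2\ell-1}\cle x_{\ell+1}\cl s_0$, this makes the complementary family $\{[r_{2k-2\ell},r_{2k-2\ell+1}],\ldots,[r_{2k},r_0]\}\cup\{f_{\ell+1},\ldots,f_k\}$ a genuine $(k+1)$-crossing. The essential mechanism is that information about the $y_j$'s is \emph{extracted from the failure} of the first candidate family rather than assumed, and the maximal choice of $\ell$ is what makes the fallback family work. Without this step (or an equivalent device), the argument you sketch cannot be completed. Your appeal to symmetry to deduce the remaining three inclusions from the one treated case is fine and matches the paper.
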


\begin{proof}
Suppose that there exists $1\le i\le k$ such that $r_0\cl x_i\cl r_{2k-2i+1}$ and let
$$\ell=\max\{1\le i\le k \;|\; r_0\cl x_i\cl r_{2k-2i+1}\}.$$
If $\ell=k$, then the set $\{e_1,\ldots,e_k,[r_0,r_1]\}$ is a $(k+1)$-crossing of $E$, thus we assume that $\ell<k$. In order for $$\{e_1,\ldots,e_\ell\}\cup\{[r_0,r_1],\ldots,[r_{2k-2\ell},r_{2k-2\ell+1}]\}$$ not to be a $(k+1)$-crossing, we have $r_{2k-2\ell}\cle y_\ell\cl r_0$, so that $r_{2k-2\ell}\cl y_{\ell+1}\cl r_0$. But the definition of $\ell$ implies that $r_{2k-2\ell+1}\cl r_{2k-2\ell-1}\cle x_{\ell+1}\cl s_0$, so that
the set $$\{[r_{2k-2\ell},r_{2k-2\ell+1}],\ldots,[r_{2k-2},r_{2k-1}],[r_{2k},r_0]\}\cup\{e_{\ell+1},\ldots,e_k\}$$ is a $(k+1)$-crossing of $E$.

By symmetry, the lemma is proved.
\end{proof}

The following lemma is at the heart of the concept of flips, that will be studied further in Section~\ref{sectionflips}. As usual, we use the symbol $\triangle$ for the symmetric difference.

\begin{lemma}\label{commonedge}
Let $f$ be a common edge of $R$ and $S$. 
Let $\{r_j\;|\; j\in\mathbb{Z}_{2k+1}\}$ and $\{s_j\;|\; j\in\mathbb{Z}_{2k+1}\}$ be the vertices of $R$ and $S$, in star order, and with $e=[r_0,s_0]$ being the common bisector of $R$ and $S$.
Then
\begin{enumerate}[(i)]
\item there exists $1\le i\le k$ such that $f=[r_{2i-1},r_{2i}]=[s_{2i},s_{2i-1}]$;
\item $E\triangle\{e,f\}$ is a $(k+1)$-crossing-free subset of $E_n$;
\item the vertices $$s_0,\ldots,s_{2i-2},s_{2i-1}=r_{2i},r_{2i+1},\ldots,r_{2k},r_0$$
(resp. $r_0,\ldots,r_{2i-2},r_{2i-1}=s_{2i},s_{2i+1},\ldots,s_{2k},s_0$) are the vertices of a $k$-star $X$ (resp. $Y$) of $E\triangle\{e,f\}$, in star order;
\item $X$ and $Y$ share the edge $e$ and their common bisector is $f$.
\end{enumerate}
\end{lemma}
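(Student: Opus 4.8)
The plan is to leverage the structural results already established, especially Lemma~\ref{paralleledges} and Theorem~\ref{common bisector}, and to treat the four parts in order, since each feeds the next. For part~(i), the key observation is that a common edge $f$ of $R$ and $S$ is both an edge of $R$ and an edge of $S$, hence of the form $[r_i,r_{i+1}]$ and of the form $[s_j,s_{j+1}]$ in star order. Lemma~\ref{paralleledges} tells us corresponding edges are parallel (non-crossing), and the discussion following it identifies the $k$ pairs $([r_{2i-1},r_{2i}],[s_{2i-1},s_{2i}])$ as those separating $R$ from $S$. The first thing I would do is argue that a shared edge must be one of these separating pairs with matching index: since $f$ lies on $R$ and on $S$ and these two $k$-stars sit on opposite sides of each separating pair, the only way a single edge can belong to both is if the parallel pair degenerates to a single segment, forcing $[r_{2i-1},r_{2i}]=[s_{2i},s_{2i-1}]$ for some $1\le i\le k$. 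The orientation/parity bookkeeping from Lemma~\ref{paralleledges} should pin down the exact index matching; this is the step where I must be careful that the two star orderings are synchronised via the common bisector $e=[r_0,s_0]$.

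For part~(iii), once $f=[r_{2i-1},r_{2i}]=[s_{2i},s_{2i-1}]$ is fixed, I would simply check that the two listed vertex sequences really trace out $k$-stars, i.e.\ that consecutive vertices in the listed star order are joined by edges of $E\triangle\{e,f\}$ and that each such edge has the right ``jump''. The sequence for $X$ splices the first half of $S$'s star walk (from $s_0$ up to $s_{2i-1}=r_{2i}$) onto the second half of $R$'s star walk (from $r_{2i}$ back to $r_0$), and symmetrically for $Y$. The edges traversed are all original edges of $R$ or $S$ except at the splice points, where the new edge $e=[r_0,s_0]$ closes the star and the removed edge is exactly $f$; verifying that $2k+1$ vertices appear and that each step connects $v$ to $v+k$ in circle order is a direct, if slightly tedious, count. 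I expect this to be the most clerical part rather than the hard part.

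Part~(iv) should then be almost immediate: by construction $X$ and $Y$ both contain $e$ (it is the closing edge in each traced star), so $e$ is their common edge; and since $f$ was the segment shared by $R$ and $S$ and now connects a vertex of $X$ to a vertex of $Y$, a short check using the uniqueness in Theorem~\ref{common bisector} (applied to the pair $X,Y$, whose union is $(k+1)$-crossing-free by part~(ii)) identifies $f$ as their unique common bisector. The main obstacle, and the part I would attack first and most carefully, is part~(ii): showing $E\triangle\{e,f\}$ is $(k+1)$-crossing-free. Here I would argue by contradiction: any $(k+1)$-crossing in $E\triangle\{e,f\}$ must use the new edge $e$ (since $E$ itself was $(k+1)$-crossing-free and we only added $e$), so it consists of $e$ together with a $k$-crossing $F$ all of whose edges cross $e$. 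Lemma~\ref{sandwich} then constrains the endpoints of $F$ to lie in the prescribed intervals $\llbracket r_{2k-2i+1},s_{2k-2i+2}\rrbracket$ and $\llbracket s_{2k-2i+1},r_{2k-2i+2}\rrbracket$; the goal is to show that these sandwiched positions force one edge of $F$ to coincide with or cross $f$, which contradicts either $F\subseteq E\triangle\{e,f\}$ (so $f\notin F$) or the non-crossing established in Lemma~\ref{paralleledges}. Making this geometric squeeze precise, i.e.\ converting the interval constraints of Lemma~\ref{sandwich} into a forced incidence with the removed edge $f$, is where the real work lies.
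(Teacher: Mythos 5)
Your proposal is correct and follows essentially the same route as the paper's own proof: part~(i) via Lemma~\ref{paralleledges} plus the parity bookkeeping, part~(ii) by reducing any putative $(k+1)$-crossing to a $k$-crossing through $e$ and applying Lemma~\ref{sandwich}, part~(iii) by directly checking the star order of the spliced vertex list, and part~(iv) by exhibiting $f$ as a bisector of both new stars and invoking the uniqueness in Theorem~\ref{common bisector}. The step you flag as ``the real work'' in part~(ii) is in fact immediate once (i) is established: since $r_{2i-1}=s_{2i}$ and $r_{2i}=s_{2i-1}$, the sandwich intervals of Lemma~\ref{sandwich} at index $k-i+1$ degenerate to the singletons $\{r_{2i-1}\}$ and $\{r_{2i}\}$, forcing that edge of the $k$-crossing to be exactly $f$, which contradicts $f\notin E\triangle\{e,f\}$.
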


\begin{figure}
\centerline{\includegraphics[scale=1]{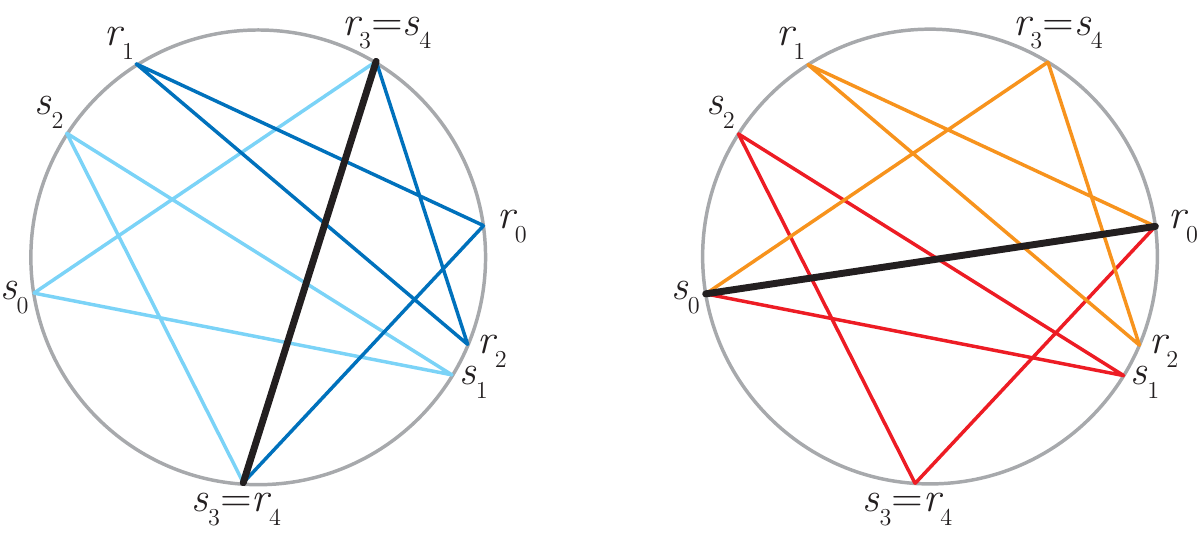}}
\caption{\small{The flip of an edge.}}\label{fig:flip}
\end{figure}

\begin{proof}
Let $u$ and $v$ denote the vertices of $f$.
Lemma~\ref{paralleledges} ensures that $\{r_0,s_0\}\cap\{u,v\}=\emptyset$ so that we can assume $r_0\cl u\cl s_0\cl v\cl r_0$. Consequently, there exists $1\le i,j\le k$ such that $u=r_{2i-1}=s_{2j}$ and $v=r_{2i}=s_{2j-1}$. Suppose that $i>j$. Then according to Lemma~\ref{paralleledges}, we have $r_0\cl r_{2i-1}\cle s_{2i}\cl s_{2j}=r_{2i-1}$ which is impossible. We obtain that $i=j$ and $f=[r_{2i-1},r_{2i}]=[s_{2i},s_{2i-1}]$.

Lemma~\ref{sandwich} then proves that any $k$-crossing of $T$ that prevent $e$ being in $T$ contains $f$, so that $T\triangle\{e,f\}$ is $(k+1)$-crossing-free.

Let $L$ be the list of vertices $(s_0,\ldots,s_{2i-2},r_{2i},\ldots,r_{2k},r_0)$. Between two consecutive elements of $L$ lie exactly $k-1$ others points of $L$ (for the circle order). This implies that $L$ is in star order. The point (iii) thus follows from the fact that any edge connecting two consecutive points of $L$ is in $E\triangle\{e,f\}$.

The edge $e$ is clearly common to $X$ and $Y$. The edge $f$ is a bisector of both angles $\angle(r_{2i-2},r_{2j-1},s_{2i+1})$ and $\angle(s_{2i-2},s_{2j-1},r_{2i+1})$, so that it is the common bisector of $X$ and $Y$.
\end{proof}


\section{$k$-triangulations as complexes of $k$-stars}\label{sectioncomplexes}

In this section, $T$ is a $k$-triangulation of the $n$-gon, {\it i.e.}~a maximal $(k+1)$-crossing-free subset of $E_n$.

\begin{theorem}\label{angle}
Any $k$-relevant angle of $T$ belongs to a unique $k$-star contained in $T$.
\end{theorem}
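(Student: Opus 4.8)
The plan is to split the statement into its easy half (uniqueness) and its substantive half (existence). Uniqueness is immediate from the recovery procedure of the Corollary following Lemma~\ref{starangles}: a single angle of a $k$-star determines the entire star, because each successive vertex is the unique one completing an angle of $T$ at the current tip. Hence the given $k$-relevant angle can lie in \emph{at most} one $k$-star of $T$, and the whole content of the theorem is to produce \emph{at least} one.

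For existence I would build the star explicitly by iterating that same rotation. Writing the given angle as $\angle(r_0,r_1,r_2)$, I define inductively $r_{m+2}$ to be the first neighbour of $r_{m+1}$ after $r_m$ in clockwise order, so that $\angle(r_m,r_{m+1},r_{m+2})$ is an angle of $T$. Since $T$ contains all $k$-boundary and $k$-irrelevant edges, every vertex has neighbours immediately on both sides, so $r_{m+2}$ always exists and the edge $[r_{m+1},r_{m+2}]$ lies in $T$; thus the rotation is a permutation of the directed edges of $T$, and the orbit of the directed edge $r_0\to r_1$ is a closed cyclic path contained in $T$. The task then reduces to showing that this particular orbit is a $k$-star: that it has length exactly $2k+1$ and that consecutive vertices in star order are exactly $k$ apart in circle order, so the traced edges form a star polygon of type $\{(2k+1)/k\}$. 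Here maximality of $T$ is used only to guarantee the rotation never stalls.

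The engine of the argument will be a crossing-type invariant on the partial path, in the spirit of Lemmas~\ref{paralleledges} and~\ref{sandwich}. Carrying as induction hypotheses that consecutive edges do not cross and that each newly created angle remains $k$-relevant, the decisive use of $(k+1)$-crossing-freeness is to forbid the path from ``cutting a corner'': were some edge $[r_{m+1},r_{m+2}]$ shorter than a $k$-boundary, a suitable sub-family of the edges already traced, completed exactly as in the proof of Lemma~\ref{sandwich}, would form a $(k+1)$-crossing. This keeps every angle of the orbit $k$-relevant and pins down the net advance of the circle-position at each rotation step. The hard part, I expect, is the closure count: proving that these controlled advances force the orbit to wind precisely $k$ times around the polygon and return to $r_0$ after exactly $2k+1$ edges, with the $2k+1$ vertices distinct and evenly interleaved. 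I would obtain this from a monovariant tracking the total turning of the path, with crossing-freeness excluding a shorter orbit and the $k$-relevance of all intermediate angles excluding a longer one.
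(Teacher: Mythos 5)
Your uniqueness argument is correct and is exactly the paper's: by Lemma~\ref{starangles}(1) every angle of a $k$-star of $T$ is an angle of $T$, so an angle determines the whole star by successive rotation, and two $k$-stars of $T$ cannot share an angle. The setup of your existence argument is also sound: since $T$ contains all edges of length at most $k$, every directed edge extends to a unique angle of $T$, the rotation is a permutation of directed edges, and its orbits are closed paths; one can even verify your claim that all angles along the orbit of a $k$-relevant angle remain $k$-relevant (although the reason is simply that the short edges present in $T$ would otherwise violate the angle condition, not a crossing argument in the style of Lemma~\ref{sandwich}).

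The genuine gap is that the ``closure count'' you defer is not a technical verification at the end --- it \emph{is} the theorem, and the invariants you propose to carry cannot establish it. All the conditions you maintain are local: consecutive edges share a vertex (so never cross), consecutive triples $r_m\prec r_{m+1}\prec r_{m+2}$ are counterclockwise, and all edges have length at least $k$. Writing $d_m$ for the counterclockwise distance from $r_m$ to $r_{m+1}$, these give only $d_m\ge k$ and $d_m+d_{m+1}\le n-1$, whence for an orbit of length $L$ and winding number $w$ one gets $wn\ge Lk$ and $L\ge 2w+1$ --- and these numerical constraints are satisfied by configurations that are not $k$-stars: a $(k-1)$-star all of whose edges have length at least $k$ ($L=2k-1$, $w=k-1$), or a closed path with $L=2k+2$ and $w=k$, both pass every test your monovariant can see when $n$ is large. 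Excluding them needs the global hypotheses in an essential way, and in the opposite pairing from your sketch: too-short orbits (like the $(k-1)$-star, which contains no forbidden crossing at all) can only be killed by \emph{maximality} of $T$ --- every non-edge is blocked by a $k$-crossing, and one must extract a contradiction from such a blocking crossing --- while too-long orbits are where one must manufacture a $(k+1)$-crossing. This is precisely the hard work the paper does: it takes a $v$-maximal $(k-1)$-crossing through the given angle, shows maximality forces the next star edge $[u,b_1]$ to be present (first step), and then transports maximality to the next angle so the construction can be iterated (second step). Your proposal replaces all of this by the expectation that a turning-number monovariant will work; as it stands, the existence half is a plan, not a proof.
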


\begin{proof}
In this proof, we need the following definition (see Fig.~\ref{farther}): let $\angle(u,v,w)$ be a $k$-relevant angle of $T$ and let $e$ and $f$ be two edges of $T$ that \emph{intersect} $\angle(u,v,w)$ ({\it i.e.}~that intersect both $[u,v]$ and $[v,w]$). If $a,b,c$ and $d$ denote their vertices such that $u\cl a\cl v\cl b\cl w$ and $u\cl c\cl v\cl d\cl w$, then we say that $e=[a,b]$ is \emph{$v$-farther} than $f=[c,d]$ if $u\cl a\cle c\cl v\cl d\cle b\cl w$. Let $E$ and $F$ be two $(k-1)$-crossings that \emph{intersect} $\angle(u,v,w)$. Let their edges be labelled $e_1=[a_1,b_1],e_2=[a_2,b_2],\ldots,e_{k-1}=[a_{k-1},b_{k-1}]$ and $f_1=[c_1,d_1],f_2=[c_2,d_2],\ldots,f_{k-1}=[c_{k-1},d_{k-1}]$ such that $u\cl a_1\cl a_2\cl \ldots\cl a_{k-1}\cl v\cl b_1\cl b_2\cl \ldots\cl b_{k-1}\cl w$ and $u\cl c_1\cl c_2\cl \ldots\cl c_{k-1}\cl v\cl d_1\cl d_2\cl \ldots\cl d_{k-1}\cl w$. Then we say that $E$ is \emph{$v$-farther} than $F$ if $e_i$ is $v$-farther than $f_i$ for every $1\le i\le k-1$. We say that $E$ is \emph{$v$-maximal} if it does not exist any $(k-1)$-crossing intersecting $\angle(u,v,w)$ and $v$-farther than $E$.

\begin{figure}
\centerline{\includegraphics[scale=1]{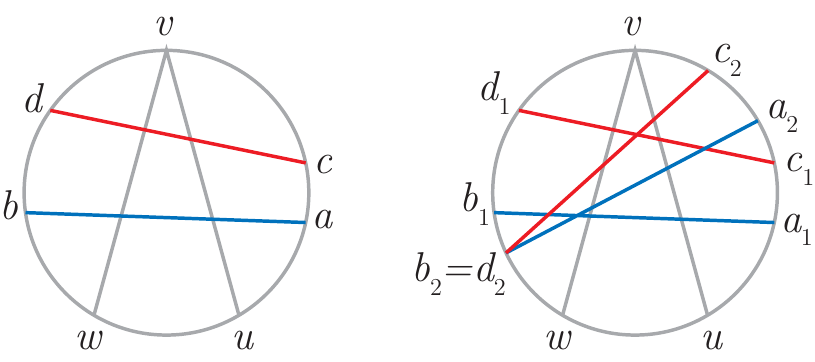}}
\caption{\small{$[a,b]$ is $v$-farther than $[c,d]$ (left) and $\{[a_i,b_i]\}$ is $v$-farther than $\{[c_i,d_i]\}$ (right).}}\label{farther}
\end{figure}

Let $\angle(u,v,w)$ be a $k$-relevant angle of $T$. 
If the edge $[u,v+1]$ is in $T$, then the angle $\angle(v+1,u,v)$ is a $k$-relevant angle of $T$ and, if it is contained in a $k$-star $S$ of $T$, then so is $\angle(u,v,w)$. Moreover, if $n>2k+1$ ($n=2k+1$ is a trivial case), $T$ can not contain all the edges $\{[u+i,v+i]\;|\; i=0\ldots n-1\}$ and $\{[u+i,v+i+1]\;|\; i=0\ldots n-1\}$. Consequently, we can assume that $[u,v+1]$ is not in $T$.

Thus we have a $k$-crossing $E$ of the form $e_1=[a_1,b_1],\ldots,e_k=[a_k,b_k]$ with $u\cl a_1\cl \ldots\cl a_k\cl v+1$ and $v+1\cl b_1\cl \ldots\cl b_k\cl u$. Since $[u,v]\in T$, $a_k=v$ and since $\angle(u,v,w)$ is an angle, $v+1\cl b_k\cle w$. Consequently, $\{e_1,\ldots,e_{k-1}\}$ forms a $(k-1)$-crossing intersecting $\angle(u,v,w)$ and we can assume that it is $v$-maximal (see Fig.~\ref{twosteps}~(a)). We will prove that the edges $[u,b_1], [a_1,b_2],\ldots, [a_{k-2},b_{k-1}],[a_{k-1},w]$ are in $T$ such that the points $u$, $a_1,\ldots,a_{k-1}$, $v$, $b_1,\ldots,b_{k-1}$, $w$ are the vertices of a $k$-star of $T$ containing the angle $\angle(u,v,w)$. To get this result, we use two steps: first we prove that $\angle(a_1,b_1,u)$ is an angle of $T$, and then we prove that the edges $e_2,\ldots,e_{k-1},[v,w]$ form a $(k-1)$-crossing intersecting $\angle(a_1,b_1,u)$ and $b_1$-maximal (so that we can reiterate the argument).

\medskip
\noindent\textsc{First step.} (see Fig.~\ref{twosteps}~(b))

Suppose that $[u,b_1]$ is not in $T$. Thus we have a $k$-crossing $F$ that prevents the edge $[u,b_1]$. Let $f_1=[c_1,d_1],\ldots,f_k=[c_k,d_k]$ denote its edges with $u\cl c_1\cl \ldots\cl c_k\cl b_1$ and $b_1\cl d_1\cl \ldots\cl d_k\cl u$.

Note first that $v\cl d_k\cle w$. Indeed, if it is not the case, then $d_k\in\rrbracket w,u\llbracket$ and $c_k\ne v$, because $\angle(u,v,w)$ is an angle. Thus either $c_k\in\rrbracket u,v\llbracket$ and then $F\cup\{[u,v]\}$ forms a $(k+1)$-crossing, or $c_k\in\rrbracket v,b_1\llbracket$ and then $E\cup\{[c_k,d_k]\}$ forms a $(k+1)$-crossing. Consequently, we have $b_1\cl d_1\cl \ldots\cl d_{k-1}\cl w$.

Let $\ell=\max\{1\le i\le {k-1}\;|\; b_i\cl d_i\cl w\}$. Then for any $1\le i\le\ell$, since $\{e_1,\ldots,e_i\}\cup\{f_i,\ldots,f_k\}$ does not form a $(k+1)$-crossing, we have $u\cl c_i\cle a_i$. Thus for any $1\le i\le\ell$, $u\cl c_i\cle a_i\cl v\cl b_i\cl d_i\cl w$, so that $f_i$ is $v$-farther than $e_i$. Furthermore, we have $u\cl c_1\cl \ldots\cl c_\ell\cl a_{\ell+1}\cl\ldots\cl a_{k-1}\cl v\cl d_1\cl\ldots\cl d_\ell\cl b_{\ell+1}\cl \ldots\cl b_{k-1}\cl w$. Consequently, we get a $(k-1)$-crossing $\{f_1,\ldots,f_\ell,e_{\ell+1},\ldots,e_{k-1}\}$ which is $v$-farther than $\{e_1,\ldots,e_{k-1}\}$; this contradicts the definition of $\{e_1,\ldots,e_{k-1}\}$. Thus we obtain $[u,b_1]\in T$.

Suppose now that $\angle(a_1,b_1,u)$ is not an angle of $T$. Then there exists $a_0\in \rrbracket u,a_1\llbracket$ such that $[b_1,a_0]\in T$. But then the $(k-1)$-crossing $\{[a_0,b_1],e_2,\ldots,e_{k-1}\}$ is $v$-farther than $\{e_1,\ldots,e_{k-1}\}$. This implies that $\angle(a_1,b_1,u)$ is an angle of $T$.
\begin{figure}
\centerline{\includegraphics[scale=1]{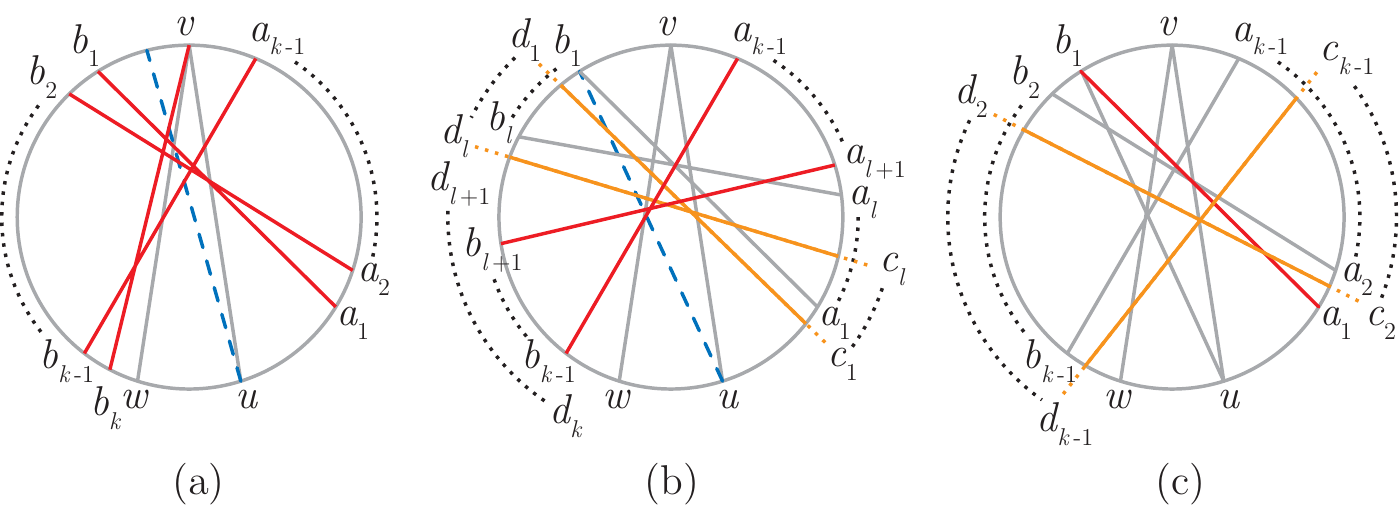}}
\caption{\small{(a) the $k$-crossing $E$; (b) First step: $[u,b_1]\in T$; (c) Second step: $\{e_2,\ldots,e_{k-1},[v,w]\}$ is $b_1$-maximal.}}\label{twosteps}
\end{figure}

\medskip
\noindent\textsc{Second step.} (see Fig.~\ref{twosteps}~(c))

Let $F$ be a $(k-1)$-crossing intersecting $\angle(a_1,b_1,u)$ and $b_1$-farther than the $(k-1)$-crossing $\{e_2,\ldots,e_{k-1},[v,w]\}$. Let $f_2=[c_2,d_2],\ldots,f_k=[c_k,d_k]$ denote its edges, with $a_1\cl c_2\cl \ldots\cl c_k\cl b_1\cl d_2\cl \ldots\cl d_k\cl u$. 

Note first that $b_k\cle d_k\cle w$. Indeed, if it is not the case, then $d_k\in\rrbracket w,u\llbracket$ and $c_k\ne v$, because $\angle(u,v,w)$ is an angle. Thus either $c_k\in\rrbracket a_1,v\llbracket$ and then $F\cup\{[u,v],e_1\}$ forms a $(k+1)$-crossing, or $c_k\in\rrbracket v,b_1\llbracket$ and then $E\cup\{[c_k,d_k]\}$ forms a $(k+1)$-crossing. Consequently, we have $b_1\cl d_2\cl\ldots\cl d_{k-1}\cl w$.

Furthermore, for any $2\le i\le k-1$, $f_i$ is $\angle(a_1,b_1,u)$-farther than $e_i$, so that $a_1\cl c_i\cle a_i\cl b_1\cl b_i\cle d_i\cl u$. In particular, $a_1\cl c_{k-1}\cle a_{k-1}\cl v$ and we get $u\cl a_1\cl c_2\cl\ldots\cl c_{k-1}\cl v\cl b_1\cl d_2\cl \ldots\cl d_{k-1}\cl w$. Consequently, the $(k-1)$-crossing $\{e_1,f_2,\ldots,f_{k-1}\}$ is $v$-farther than $\{e_1,\ldots,e_{k-1}\}$, which is a contradiction.
\end{proof}

The following easy consequence of Theorem~\ref{angle} justifies the title of this paper.

\begin{corollary}\label{incidences}
Let $e$ be an edge of $T$.
\begin{enumerate}[(i)]
\item If $e$ is a $k$-relevant edge, then it belongs to exactly two $k$-stars of $T$ (one on each side);
\item If $e$ is a $k$-boundary edge, then it belongs to exactly one $k$-star of $T$ (on its ``inner" side); \item If $e$ is a $k$-irrelevant edge, then it does not belong to any $k$-star of $T$.
\end{enumerate}
\end{corollary}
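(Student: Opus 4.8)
The plan is to reduce the count of $k$-stars through $e$ to a count of $k$-relevant angles of $T$ having $e$ as one of their two edges, and then to evaluate the latter. The bridge is Theorem~\ref{angle}: every $k$-relevant angle lies in a unique $k$-star. Part (iii) is immediate and separate: as observed in the proof of Lemma~\ref{starangles}, every edge of a $k$-star splits its remaining $2k-1$ vertices into blocks of size $k-1$ and $k$, so it has length at least $k$; hence a $k$-irrelevant edge can be an edge of no $k$-star.

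For (i) and (ii) I would set up a $2$-to-$1$ correspondence. If a $k$-star $S$ contains $e$, then $e$ is one of its $2k+1$ edges, and the two angles of $S$ at the endpoints of $e$ (in star order) are $k$-relevant angles of $T$ having $e$ as an edge, by Lemma~\ref{starangles}(1). A short computation in the star order shows both angles open toward the side of $e$ carrying $k$ of the $2k+1$ vertices of $S$: their second edges run to $s_{ik-k}=s_{ik+k+1}$ and to $s_{(i+2)k}$, both on that side. Conversely, any $k$-relevant angle of $T$ containing $e$ lies, by Theorem~\ref{angle}, in a unique $k$-star, which then contains $e$ since the edges of an angle of $S$ are edges of $S$. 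Because two $k$-stars share no angle (the corollary to Lemma~\ref{starangles}), distinct stars yield disjoint pairs of angles, so the map from $k$-relevant angles at $e$ to $k$-stars through $e$ is surjective with every fibre of size exactly $2$. Thus the number of stars through $e$ is half the number of $k$-relevant angles of $T$ having $e$ as an edge, and the two angles coming from one star record the side on which that star sits.

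It then remains to count the $k$-relevant angles with $e$ as an edge. Writing $e=[u,v]$, each such angle has $u$ or $v$ as its vertex, and at each endpoint there is exactly one angle opening to each side of $e$, so up to four in all. The angle at an endpoint $p$ opening to a side $\sigma$ is $k$-relevant precisely when its second edge $[p,q]$, joining $p$ to its first $T$-neighbour beyond $e$ on side $\sigma$, has length at least $k$. So the whole question becomes: on which sides is this second edge long enough?

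The main work, and the step I expect to be the real obstacle, is exactly this length analysis. The key leverage is that $T$ contains all edges of length at most $k$, so $p$ is already joined to every vertex within cyclic distance $k$; tracing which forced neighbour is the one adjacent to $e$ shows that $[p,q]$ has length at least $k$ on every side except the short side of a $k$-boundary edge, where the relevant second edge is forced to have length $k-1$ and the angle fails to be $k$-relevant. This yields four $k$-relevant angles when $e$ is $k$-relevant (two per side) and exactly two when $e$ is a $k$-boundary (only the inner side), which via the correspondence gives two stars one on each side, respectively one star on the inner side. I would finally check the degenerate situations, namely $n=2k+1$ and short edges where one side of $e$ has fewer than two vertices, verifying that the missing angles there are precisely the non-relevant ones, so that the counts $4$, $2$, $0$ are unaffected.
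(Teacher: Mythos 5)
Your proof is correct and takes essentially the paper's approach: the paper states this corollary without proof, as an ``easy consequence'' of Theorem~\ref{angle}, and your two-to-one correspondence between $k$-relevant angles having $e$ as an edge and $k$-stars through $e$ (via Lemma~\ref{starangles} and the uniqueness in Theorem~\ref{angle}), combined with counting which of the at most four angles of $T$ at $e$ are $k$-relevant, is precisely the intended argument. The one informal step, the length analysis of the second edge of each angle, reaches the correct conclusions (length at least $k$ on every side when $e$ is $k$-relevant or on the long side of a $k$-boundary edge, and exactly $k-1$ on the short side of a $k$-boundary edge), and is routine to verify since $e$ itself having length at least $k$ rules out the wrap-around case.
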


\begin{corollary}\label{findingstars}
\begin{enumerate}
\item For any $k$-star $S$ in $T$ and for any vertex $r$ not in $S$ there is a unique $k$-star $R$ in $T$ such that $r$ is a vertex of the common bisector of $R$ and $S$.
\item Any $k$-relevant edge which is not in $T$ is the common bisector of a unique pair of $k$-stars of $T$.
\end{enumerate}
\end{corollary}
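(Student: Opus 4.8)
The plan is to derive both statements from the dictionary between angles and $k$-stars provided by Theorem~\ref{angle}, together with the uniqueness of the common bisector of a pair of $k$-stars (Theorem~\ref{common bisector}) and the description of the angle of a star bisected by a given ray (Lemma~\ref{starangles}(2)). The single technical fact underlying everything is the following: \emph{if $[a,b]$ is a $k$-relevant edge not in $T$, then there is a unique angle $\angle(x,a,z)$ of $T$ containing $b$, and this angle is $k$-relevant.} Uniqueness holds because $[a,b]\notin T$, so $b$ lies in exactly one angle of $T$ at $a$. For $k$-relevance, I would use that $T$ contains all $kn$ edges of length at most $k$; in particular $a$ is joined in $T$ to $a\pm1,\dots,a\pm k$, and since $|a-b|>k$ the vertex $b$ lies strictly beyond both $a+k$ and $a-k$. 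This forces the two edges $[a,x]$ and $[a,z]$ bounding the angle containing $b$ to have length at least $k$, so $\angle(x,a,z)$ is $k$-relevant and, by Theorem~\ref{angle}, lies in a unique $k$-star of $T$, of which $[a,b]$ is then a bisector with apex $a$.

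For part~(1), fix $S$ and a vertex $r\notin S$. By Lemma~\ref{starangles}(2) there is a unique angle $\angle(u,v,w)$ of $S$ bisected by $[v,r]$; since this is an angle of $T$ (Lemma~\ref{starangles}(1)) and $r\in\rrbracket w,u\llbracket$, the edge $[v,r]$ is not in $T$, and a length count shows it is $k$-relevant. I would then apply the technical fact at the endpoint $r$: the unique angle of $T$ at $r$ containing $v$ is $k$-relevant, so Theorem~\ref{angle} yields a unique $k$-star $R$ (with $r$ a vertex, whence $R\ne S$) of which $[v,r]$ is a bisector at apex $r$. Since $[v,r]$ also bisects $S$ at apex $v$, it is a common bisector, hence by Theorem~\ref{common bisector} \emph{the} common bisector of $R$ and $S$, giving existence. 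For uniqueness, any admissible $R'$ has a common bisector $[r,s]$ with $S$ whose $S$-endpoint $s$ is forced to equal $v$ by Lemma~\ref{starangles}(2); the angle of $R'$ bisected by $[r,v]$ then has apex $r$ and equals the unique angle of $T$ at $r$ containing $v$, so Theorem~\ref{angle} forces $R'=R$.

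For part~(2), let $e=[a,b]$ be $k$-relevant with $e\notin T$. Applying the technical fact at $a$ produces a unique $k$-star $R$ having $e$ as a bisector (apex $a$), and applying it symmetrically at $b$ produces a unique $k$-star $S$ having $e$ as a bisector (apex $b$). Being a bisector of both, $e$ is a common bisector, so by Theorem~\ref{common bisector} it is the unique common bisector of the pair $\{R,S\}$, which gives existence. For uniqueness, if $e$ is the common bisector of some pair $\{R',S'\}$, then its two endpoints $a,b$ are precisely the apexes of the two bisected angles; the angle with apex $a$ is an angle of $T$ containing $b$, hence \emph{the} unique such angle, and Theorem~\ref{angle} determines its star, and likewise at $b$. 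Thus $\{R',S'\}$ coincides with the pair constructed above.

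The only genuinely delicate point, used identically in both parts, is the $k$-relevance of the angle of $T$ surrounding a far vertex; everything else is formal bookkeeping with Lemma~\ref{starangles}, Theorem~\ref{angle} and Theorem~\ref{common bisector}. I expect the main obstacle to be making this length count rigorous, that is, verifying that the full fan of length-$\le k$ edges present at each endpoint forces the edges bounding the angle around a vertex at distance $>k$ to have length $\ge k$, so that Theorem~\ref{angle} applies. Once that is established, identifying $e$ (respectively $[v,r]$) as a common bisector and invoking the uniqueness in Theorem~\ref{common bisector} are immediate.
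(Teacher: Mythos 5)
Your proposal is correct and takes essentially the same route as the paper: form the unique angle of $T$ at each relevant endpoint containing the other endpoint, invoke Theorem~\ref{angle} to obtain the $k$-star(s), and combine Lemma~\ref{starangles}(2) with Theorem~\ref{common bisector} to identify the edge as the common bisector and to get uniqueness. The only difference is that you explicitly verify the $k$-relevance of the angles at the endpoints (using the fan of edges of length at most $k$ present at every vertex), a hypothesis of Theorem~\ref{angle} that the paper's proof leaves implicit.
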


\begin{proof}
Let $\angle(u,s,v)$ be the unique angle of $S$ which contains $r$. Let $\angle(x,r,y)$ be the unique angle of $T$ of vertex $r$ which contains $s$. According to Theorem~\ref{angle}, the angle $\angle(x,r,y)$ belongs to a unique $k$-star $R$. The common bisector of $R$ and $S$ is $[r,s]$ and $R$ is the only such $k$-star of $T$.

Let $e=[r,s]$ be a $k$-relevant edge, not in $T$. Let $\angle(x,r,y)$ (resp. $\angle(u,s,v)$) denote the unique angle of $T$ of vertex $r$ (resp. $s$) which contains $s$ (resp. $r$). According to Theorem~\ref{angle}, the angle $\angle(x,r,y)$ (resp. $\angle(u,s,v)$) belongs to a unique $k$-star $R$ (resp. $S$). The common bisector of $R$ and $S$ is $[r,s]$ and $(R,S)$ is the only such couple of $k$-stars of $T$.
\end{proof}

Observe that parts (1) and (2) of this lemma give bijections from 
\begin{enumerate}
\item ``vertices not used in the $k$-star $S$ of $T$" and ``$k$-stars of $T$ different from $S$";
\item ``$k$-relevant edges not used in $T$" and ``pairs of $k$-stars of $T$".
\end{enumerate}
From any of these two bijections, and using Corollary~\ref{incidences} for double counting, it is easy to derive the number of $k$-stars and of $k$-relevant edges in $T$:

\begin{corollary}\label{starsenumeration}
\begin{enumerate}
\item Any $k$-triangulation of the $n$-gon contains exactly $n-2k$ $k$-stars, $k(n-2k-1)$ $k$-relevant edges and $k(2n-2k-1)$ edges.
\item The $k$-triangulations  are exactly the $(k+1)$-crossing-free subsets of $E_n$ of cardinality $k(2n-2k-1)$.
\end{enumerate}
\end{corollary}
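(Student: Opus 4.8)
The plan is to treat part (1) as pure bookkeeping built on the bijections of Corollary~\ref{findingstars} and the incidence count of Corollary~\ref{incidences}, and then to deduce part (2) from part (1) by a maximality argument. First I would count the $k$-stars. To apply the bijection of Corollary~\ref{findingstars}~(1) I need at least one $k$-star to start from, and this is the only point that requires an argument, though a short one: since $n\ge 2k+1>2k$, the edges of length exactly $k$ (the $k$-boundary edges) exist and all lie in $T$, so by Corollary~\ref{incidences}~(ii) each of them belongs to exactly one $k$-star and hence $T$ contains at least one $k$-star $S$. Now Corollary~\ref{findingstars}~(1) gives a bijection between the vertices of $V_n$ not used by $S$ and the $k$-stars of $T$ other than $S$. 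As $S$ has exactly $2k+1$ vertices, there are $n-(2k+1)$ vertices outside $S$, hence $n-2k-1$ further $k$-stars, so exactly $n-2k$ $k$-stars in total.

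Next I would count the edges by double counting the incidences between edges of $T$ and $k$-stars of $T$. On one hand each $k$-star has $2k+1$ edges, so the number of incidences is $(2k+1)(n-2k)$. On the other hand, by Corollary~\ref{incidences}, a $k$-irrelevant edge lies in no $k$-star, each of the $n$ $k$-boundary edges (there are $n$ of them since $k<n/2$) lies in exactly one, and each $k$-relevant edge of $T$ lies in exactly two. Writing $r$ for the number of $k$-relevant edges of $T$, this gives $2r+n=(2k+1)(n-2k)$, whence $r=k(n-2k-1)$. Since every $k$-triangulation contains all $kn$ edges of length at most $k$, the total number of edges is $kn+r=kn+k(n-2k-1)=k(2n-2k-1)$, which finishes part (1).

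Finally, for part (2), every $k$-triangulation is $(k+1)$-crossing-free and, by part (1), has cardinality $k(2n-2k-1)$. Conversely, let $E$ be any $(k+1)$-crossing-free subset of $E_n$ with $|E|=k(2n-2k-1)$, and extend it to a maximal $(k+1)$-crossing-free set $T\supseteq E$. Then $T$ is a $k$-triangulation, so $|T|=k(2n-2k-1)=|E|$ by part (1), and the inclusion $E\subseteq T$ together with equality of cardinalities forces $E=T$; hence $E$ is itself a $k$-triangulation.

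I expect no genuine obstacle here: the statement is essentially arithmetic once Theorem~\ref{angle} and Corollary~\ref{findingstars} are in hand. The single point worth flagging is the need to exhibit one $k$-star before invoking the bijection, which is why I would lead with the remark that the $k$-boundary edges guarantee existence; the rest is the double count and the routine extension argument for part (2).
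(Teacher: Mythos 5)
Your proof is correct and takes essentially the same route as the paper, which derives the counts from the bijections of Corollary~\ref{findingstars} combined with the double counting provided by Corollary~\ref{incidences} (the paper only sketches this, stating that the numbers are ``easy to derive'' from either bijection). Your explicit verification that at least one $k$-star exists (via the $k$-boundary edges and Corollary~\ref{incidences}~(ii)) fills in a detail the paper leaves implicit, and your extension-to-maximal argument for part (2) is the intended one.
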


Similarly, we prove the following result:

\begin{corollary}
\label{cor:numberofstars}
Let $[u,v]$ be a $k$-relevant or $k$-boundary edge of $T$. Then the number of $k$-stars of $T$ on the positive side of the oriented edge from $u$ to $v$ equals $|\llbracket v,u\llbracket|-k$.
\end{corollary}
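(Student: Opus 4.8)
The plan is to fix the tail $u$ of the edge and sweep the head $v$ counterclockwise, tracking how many $k$-stars sit on the positive side of the oriented edge from $u$ to $v$. For a $k$-star $S$ with vertex set $V$ I would set $\delta_S(v):=|\rrbracket v,u\llbracket\cap V|-|\rrbracket u,v\llbracket\cap V|$, so that $S$ lies on the positive side of $u\to v$ exactly when $\delta_S(v)>0$. By the corollary stating that a $k$-star has different numbers of vertices on the two sides of any edge of $E_n$, we have $\delta_S(v)\ne 0$ for every $v\ne u$, so $\delta_S(v)$ determines the side unambiguously. The goal becomes computing $N(v):=|\{S:\delta_S(v)>0\}|$ and showing $N(v)=|\llbracket v,u\llbracket|-k$.

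First I would record the elementary behaviour of the sweep. Passing from $v$ to $v+1$ changes $\delta_S$ by the negative of the number of $v,v+1$ that are vertices of $S$, hence $\delta_S$ is non-increasing along the sweep from $u+1$ to $u-1$; since $\delta_S(u+1)>0$ and $\delta_S(u-1)<0$, each star is positive on an initial run of values of $v$ and negative afterwards, i.e.\ it switches sides exactly once. A direct estimate shows that at $v=u+k$ every star is still positive (the arc $\rrbracket u,u+k\llbracket$ holds only $k-1$ vertices, which keeps $\delta_S(u+k)\ge 1$) and at $v=u-k$ every star is already negative, so all $n-2k$ switches occur during the $n-2k$ steps whose target lies in $\llbracket u+k+1,u-k\rrbracket$. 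Granting that exactly one star switches at each such step, one gets $N(v)=(n-2k)-|\llbracket u+k,v\llbracket|=|\llbracket v,u\llbracket|-k$, the desired formula; note that this argument never uses $[u,v]\in T$, so the statement in fact holds for every oriented chord of length at least $k$.

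Everything therefore reduces to the key step: the map sending each star $S$ to its switch target $m_S$ (the first $v$ with $\delta_S(v)<0$) is a bijection onto $\llbracket u+k+1,u-k\rrbracket$. As both sets have $n-2k$ elements, it is enough to prove injectivity. When $u\notin V$, one checks from the description of the angles of a $k$-star in terms of its vertices that $m_S$ is exactly the apex of the angle of $S$ that contains $u$; this angle is a $k$-relevant angle of $T$ by Lemma~\ref{starangles}, and since each vertex hosts a unique angle of $T$ containing $u$, Theorem~\ref{angle} forces two stars with a common target and not containing $u$ to coincide. When $u\in V$, the target $m_S$ is one more than the $k$-th vertex of $S$ after $u$, and two such stars with equal target would both contain the edge $[u,m_S-1]$ on the same side, contradicting Corollary~\ref{incidences}.

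The main obstacle is the remaining mixed case, where $m_S=m_{S'}$ but $u$ is a vertex of exactly one of $S,S'$; this is precisely where the fact that the endpoints $u,v$ may themselves be star-vertices bites. I would resolve it by comparing the two local pictures at the common target $m$: for the star not containing $u$ the two star-edges bounding its angle at $m$ both have length at least $k$, whereas $u\in V$ together with the same target forces a shorter edge incident to $m$ on the side of $u$, and this incompatibility (via Lemma~\ref{starangles} and Corollary~\ref{incidences}) gives the contradiction. Once injectivity is secured the counting of the second paragraph closes the argument; the monotone sweep and the bookkeeping around it are routine, so essentially all the difficulty is concentrated in this bijectivity statement.
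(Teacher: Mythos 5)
Your sweep is set up correctly ($\delta_S$ is non-increasing, every star is positive at $v=u+k$ and negative at $v=u-k$, and the bookkeeping formula is right \emph{given} the bijection), but it collapses exactly at the step you identify as carrying all the difficulty. The first error is the claim that $\delta_S(v)\ne 0$ for every $v\ne u$: the corollary you invoke (the one following Lemma~\ref{starangles}) is stated, and is only true, for edges of the $(k+1)$-crossing-free set $E$, not for arbitrary chords of $E_n$. Indeed $\delta_S(v)=0$ occurs exactly when one of $u,v$ is a vertex of $S$ and the other lies in the angle of $S$ at that vertex, and then $\delta_S$ stays zero along a whole plateau of positions. This is not a technicality: it makes your key bijectivity claim false. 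Take $k=1$, $n=6$, $u=0$, and the triangulation with diagonals $[0,2]$, $[2,4]$, $[2,5]$. Computing the switch targets (first $v$ with $\delta_S(v)<0$) of its four triangles gives $\{0,1,2\}\mapsto 2$, $\{2,3,4\}\mapsto 4$, $\{0,2,5\}\mapsto 5$, $\{2,4,5\}\mapsto 5$: the target $3$ is never attained and $5$ is attained twice, so $S\mapsto m_S$ is not injective (the non-strict convention ``first $v$ with $\delta_S(v)\le 0$'' only moves the collision: the targets become $2,4,3,3$). In both conventions the colliding pair is \emph{mixed} --- one star contains $u$, the other does not --- so the case you hoped to exclude by comparing local pictures at the common target genuinely occurs; no argument can rule it out. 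Your within-case injectivity arguments (via Theorem~\ref{angle} when $u\notin S$, via Corollary~\ref{incidences} when $u\in S$) are essentially sound, but they cannot be completed to full injectivity. Your closing overclaim fails for the same reason: in this example the chord $[0,3]\notin T$ has only one triangle on its positive side, while your formula predicts $|\llbracket 3,0\llbracket|-1=2$; the hypothesis $[u,v]\in T$ is genuinely needed.

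The statement does hold at every position $v$ with $[u,v]\in T$, because collisions and gaps of the target map cancel out between consecutive neighbors of $u$ in $T$, but your derivation, which needs exact bijectivity position by position, cannot detect this cancellation. The paper's proof avoids the sweep altogether: it takes the $k$-star $R$ containing $[u,v]$ on its positive side (Corollary~\ref{incidences}), uses the bijection of Corollary~\ref{findingstars}(1) between vertices not in $R$ and $k$-stars distinct from $R$ (each star corresponding to the vertex it contributes to its common bisector with $R$), and shows via Lemma~\ref{paralleledges} that such a star lies on the positive side of $[u,v]$ exactly when its bisector vertex lies in $\rrbracket v,u\llbracket$; counting these vertices, plus $R$ itself, gives $|\llbracket v,u\llbracket|-k$. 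If you want to rescue a sweep-style argument, you would have to jump only between consecutive neighbors of $u$ in $T$ and prove that the number of stars switching sides across one angle at $u$ equals the number of positions skipped --- which, unwound, is essentially the paper's bisector bijection.
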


\begin{proof}
If $|\llbracket v,u\llbracket|=k$, then any $k$-star of $T$ has certainly more vertices in $\llbracket u,v\rrbracket$ than in $\llbracket v,u\rrbracket$, so that any $k$-star of $T$ lies on the negative side of $[u,v]$.

Suppose now that $|\llbracket v,u\llbracket|>k$. Let $R$ be the $k$-star of $T$ containing $[u,v]$ and lying on the positive side of it. According to the bijection~(1) given by Corollary~\ref{findingstars}, we only have to prove that any $k$-star $S$ of $T$ (distinct from $R$) lies on the positive side of $[u,v]$ if and only if the vertex of $S$ of the common bisector of $R$ and $S$ lies in $\rrbracket v,u\llbracket$. But this is an easy  consequence of Lemma~\ref{paralleledges}.
\end{proof}

\begin{example}
\rm
To illustrate the results of this section, Figure~\ref{fig:2triang8pointsstars} shows the $2$-stars contained in the $2$-triangulation $T$ of the octagon of Figure~\ref{fig:2triang8points}. There are four $2$-stars and six $2$-relevant edges (Corollary~\ref{starsenumeration}). Every $2$-relevant edge is contained in two $2$-stars of $T$ (Corollary~\ref{incidences}) and each of the twenty $2$-relevant angles of $T$ is contained in exactly one $2$-star of $T$ (Theorem~\ref{angle}).
\end{example}

\begin{figure}
\centerline{\includegraphics[scale=1]{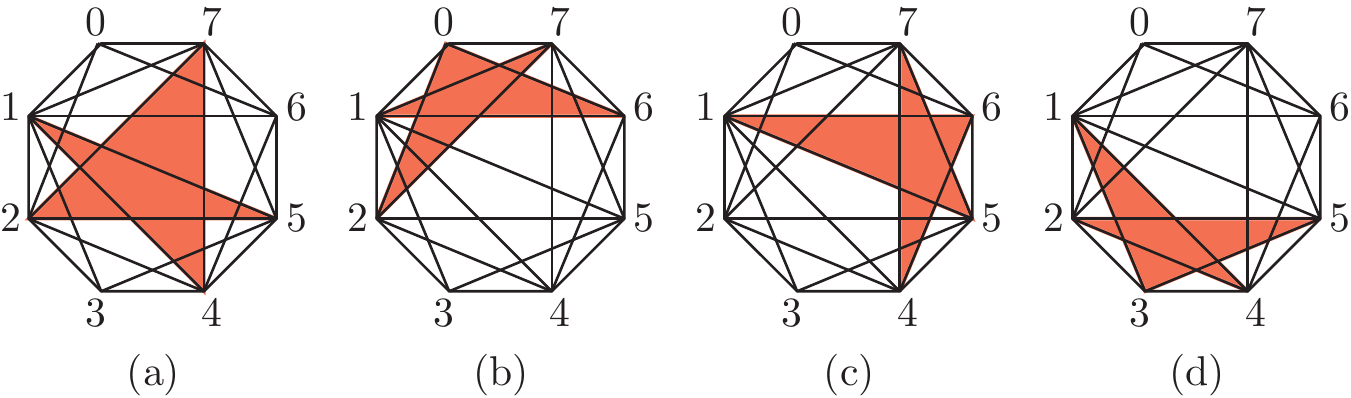}}
\caption{\small{The four $2$-stars in the $2$-triangulation of the octagon of Figure~\ref{fig:2triang8points}.}}\label{fig:2triang8pointsstars}
\end{figure}


\section{The graph of flips}\label{sectionflips}

Let $T$ be a $k$-triangulation of the $n$-gon, let $f$ be a $k$-relevant edge of $T$, let $R$ and $S$ be the two $k$-stars of $T$ containing $f$ (Corollary~\ref{incidences}), and let $e$ be the common bisector of $R$ and $S$. Remember that we proved (Lemma~\ref{commonedge}) that $T\triangle\{e,f\}$ is a $(k+1)$-crossing-free subset of $E_n$. Observe moreover that $T\triangle\{e,f\}$ is maximal: this follows from Corollary~\ref{starsenumeration}, but also from the fact that if $T\triangle\{e,f\}$ is properly contained in a $k$-triangulation $\widetilde{T}$, then $\widetilde{T}\triangle\{e,f\}$ is $(k+1)$-crossing-free and properly contains $T$. 

Thus, $T\triangle\{e,f\}$ is a $k$-triangulation of the $n$-gon.

\begin{lemma}\label{lemma:flip}
$T$ and $T\triangle\{e,f\}$ are the only two $k$-triangulations of the $n$-gon contaning $T\setminus\{f\}$.
\end{lemma}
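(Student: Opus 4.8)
The plan is to use the cardinality characterisation of $k$-triangulations to reduce the statement to a classification of the single edges that can be added to $T\setminus\{f\}$. Let $\widetilde T$ be any $k$-triangulation containing $C:=T\setminus\{f\}$. By Corollary~\ref{starsenumeration} every $k$-triangulation of the $n$-gon has exactly $k(2n-2k-1)$ edges, so $|\widetilde T|=|T|=|C|+1$; since $\widetilde T\supseteq C$, this forces $\widetilde T=C\cup\{g\}$ for a single edge $g\notin C$. As $C$ already contains all $k$-boundary and $k$-irrelevant edges (removing the $k$-relevant edge $f$ from $T$ destroys none of them), the new edge $g$ must be $k$-relevant. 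If $g=f$ then $\widetilde T=T$; otherwise $g\notin T$, and the whole problem reduces to showing that the only $k$-relevant edge outside $T$ that can be added to $C$ without creating a $(k+1)$-crossing is $e$.

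First I would record the two facts that pin $e$ down. By Corollary~\ref{findingstars}(2), every $k$-relevant edge not in $T$ is the common bisector of a \emph{unique} pair of $k$-stars of $T$, and under this bijection $e$ corresponds to the pair $\{R,S\}$. I would also invoke Corollary~\ref{incidences}, which says that $f$, being $k$-relevant, lies in exactly the two $k$-stars $R$ and $S$ of $T$ and in no other.

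Now take $g\notin T$ to be $k$-relevant with $g\ne e$, and suppose for contradiction that $C\cup\{g\}$ is $(k+1)$-crossing-free. By Corollary~\ref{findingstars}(2), $g$ is the common bisector of a unique pair $\{P,Q\}$ of $k$-stars of $T$, and the uniqueness of that correspondence together with $g\ne e$ gives $\{P,Q\}\ne\{R,S\}$. Hence at least one of the two stars, say $Q$, is different from both $R$ and $S$; by Corollary~\ref{incidences} this means $f\notin Q$. The key geometric input is that a bisector of a $k$-star crosses $k$ of its edges which form a $k$-crossing: this is precisely the $(k+1)$-crossing exhibited in the proof of Lemma~\ref{starangles}, applied to the angle of $Q$ bisected by $g$. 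This yields a $k$-crossing $G\subseteq Q$ all of whose edges cross $g$. Since $f\notin Q$ we have $G\subseteq Q\subseteq C$, so $G\cup\{g\}$ is a $(k+1)$-crossing contained in $C\cup\{g\}=\widetilde T$, contradicting that $\widetilde T$ is $(k+1)$-crossing-free. Therefore $g=e$ and $\widetilde T=C\cup\{e\}=T\triangle\{e,f\}$.

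The only genuinely nontrivial step is the last one: producing, whenever $g\ne e$, a blocking $k$-crossing that avoids $f$. The crux is the simultaneous use of three facts — that $g\ne e$ forces the associated star pair to differ from $\{R,S\}$ (uniqueness in Corollary~\ref{findingstars}), that $f$ sits in no $k$-star other than $R$ and $S$ (Corollary~\ref{incidences}), and that each star supplies an intrinsic $k$-crossing across any of its bisectors (Lemma~\ref{starangles}). Everything else is bookkeeping with cardinalities. One subtlety to check carefully is that the edges of $Q$ crossed by $g$ really do form a $k$-crossing and really are $k$ in number, which is why I would cite the explicit crossing built in the proof of Lemma~\ref{starangles} rather than merely its statement, paying attention to the star-order indexing used there.
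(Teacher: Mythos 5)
Your proof is correct and follows essentially the same route as the paper's: both identify the unique pair of $k$-stars having the candidate edge as common bisector (Corollary~\ref{findingstars}(2)), observe via Corollary~\ref{incidences} that one of these stars avoids $f$, and then use the explicit $(k+1)$-crossing formed by a $k$-star together with one of its bisectors (from the proof of Lemma~\ref{starangles}) to block the candidate edge. Your extra bookkeeping with Corollary~\ref{starsenumeration} to reduce to a single added edge is a slightly more explicit packaging of what the paper leaves implicit, but the core argument is identical.
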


\begin{proof}
Let $e'$ be any edge of $E_n\setminus T$ distinct from $e$. Let $R'$ and $S'$ be the two $k$-stars with common bisector $e'$ (Lemma~\ref{findingstars}~(2)). We can assume that $R'$ does not contain $f$ and then $R'\cup\{e'\}$ is contained in $T\triangle\{e',f\}$ and forms a $(k+1)$-crossing.
\end{proof}

We say that we obtain the $k$-triangulation $T\triangle\{e,f\}$ from the $k$-triangulation $T$ by \emph{flipping} the edge $f$. 

Let $G_{n,k}$ be the \emph{graph of flips} on the set of $k$-triangulations of the $n$-gon, {\it i.e.}~the graph whose vertices are the $k$-triangulations of the $n$-gon and whose edges are the pairs of $k$-triangulations related by a flip.
It follows from Corollary~\ref{starsenumeration} and Lemma~\ref{lemma:flip} that $G_{n,k}$ is regular of degree $k(n-2k-1)$: every $k$-relevant edge of $T$ can be flipped, in a unique way. In this section, we prove the connectivity of this graph and bound its diameter.

\medskip
Note that $e$ and $f$ necessarily cross.
In particular, if $e=[\alpha,\beta]$ and $f=[\gamma,\delta]$, with $0\cle\alpha\cl\beta\cle n-1$ and $0\cle\gamma\cl\delta\cle n-1$, then
\begin{itemize}
\item either $0\cle\alpha\cl\gamma\cl\beta\cl\delta\cle n-1$ and the flip is said to be \emph{slope-decreasing},
\item or $0\cle\gamma\cl\alpha\cl\delta\cl\beta\cle n-1$ and the flip is said to be \emph{slope-increasing}.
\end{itemize}
We define a partial order on the set of $k$-triangulations of the $n$-gon as follows: for two $k$-triangulations $T$ and $T'$, we say $T<T'$ if and only if there exists a sequence of slope-increasing flips from $T$ to $T'$.
That this is indeed a partial order follows from the fact that each slope increasing flip increases the \emph{total slope} of a $k$-triangulation, where the slope of an edge $[u,v]$ is defined as $u+v$ (with the sum taken in $\mathbb{N}$, not in $\mathbb{Z}_{n}$)  and the total slope of a $k$-triangulation is the sum of slopes of its edges.

Let $T_{n,k}^{\min}$ be the $k$-triangulation of the $n$-gon whose set of $k$-relevant edges is $\{[i,j]\;|\;i\in\llbracket 0,k-1\rrbracket\;\mathrm{and}\;j\in\llbracket i+k,i-k\rrbracket\}$ (see Fig.~\ref{min}).

\begin{figure}
\centerline{\includegraphics[scale=1]{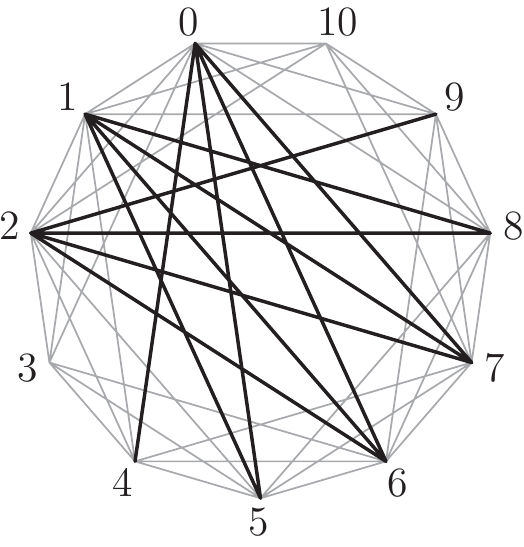}}
\caption{\small{The triangulation $T_{11,3}^{\min}$.}}\label{min}
\end{figure}

\begin{lemma}\label{connexity}
For any $k$-triangulation of the $n$-gon $T\ne T_{n,k}^{\min}$, there exists a $k$-relevant edge $f\in T\setminus T_{n,k}^{\min}$ such that the edge added by the flip of $f$ is in $T_{n,k}^{\min}$.

In particular, the $k$-triangulation $T_{n,k}^{\min}$ is the unique least element of the set of $k$-triangulations of the $n$-gon, partially ordered by $<$.
\end{lemma}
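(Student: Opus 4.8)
The plan is to prove the first (combinatorial) assertion and deduce the ``least element'' statement from it by a monotonicity-plus-induction argument. First I would record the explicit shape of $T_{n,k}^{\min}$. Since all $k$-boundary and $k$-irrelevant edges belong to every $k$-triangulation, and the listed $k$-relevant edges are exactly those with an endpoint in $\llbracket 0,k-1\rrbracket$, the triangulation $T_{n,k}^{\min}$ contains precisely all edges incident to $\{0,\dots,k-1\}$, while the vertices of $\llbracket k,n-1\rrbracket$ carry no $k$-relevant edge among themselves. Thus a $k$-relevant edge $[u,v]$ (with $0\cle u\cl v\cle n-1$) lies in $T_{n,k}^{\min}$ if and only if $u\in\llbracket 0,k-1\rrbracket$; I will call such edges \emph{fan edges}. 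Because $|T|=|T_{n,k}^{\min}|$ by Corollary~\ref{starsenumeration}, if $T\ne T_{n,k}^{\min}$ then $T$ has a $k$-relevant edge with both endpoints in $\llbracket k,n-1\rrbracket$ (a non-fan edge), and symmetrically $T_{n,k}^{\min}\setminus T$ contains a fan edge.

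Next I would produce the flip. The guiding model is the classical case $k=1$, where one locates a ``fan-blocking'' triangle $\{0,a,b\}$ of $T$ (with $[0,a],[0,b]\in T$ but $b>a+1$) and flips its base $[a,b]$, which is non-fan, to the new fan edge $[0,d]$ given by the apex $d\in\rrbracket a,b\llbracket$ of the triangle on the other side of $[a,b]$. To generalise this, I would take a fan edge $e=[i,m]\in T_{n,k}^{\min}\setminus T$ chosen extremally (for instance of smallest second endpoint $m$, and among these with $i\in\llbracket 0,k-1\rrbracket$ largest). By Corollary~\ref{findingstars}(2), the $k$-relevant non-edge $e$ is the common bisector of a unique pair of $k$-stars $R,S$ of $T$; the aim is to show that $R$ and $S$ share a common edge $f$, so that by Lemma~\ref{commonedge} flipping $f$ replaces it by exactly $e$ and yields a $k$-triangulation $T\triangle\{e,f\}$.

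Granting that $f$ exists, the slope behaviour is automatic. Writing $e=[\alpha,\beta]$ and $f=[\gamma,\delta]$ with $0\cle\alpha\cl\beta\cle n-1$ and $0\cle\gamma\cl\delta\cle n-1$, the two edges cross; since $\alpha=i\in\llbracket 0,k-1\rrbracket$ whereas $\gamma,\delta\in\llbracket k,n-1\rrbracket$, the interleaving must be $0\cle\alpha\cl\gamma\cl\beta\cl\delta\cle n-1$, which is precisely the slope-decreasing pattern. In particular the flipped edge $f$ has both endpoints in $\llbracket k,n-1\rrbracket$, hence $f\in T\setminus T_{n,k}^{\min}$, and the move strictly increases $|T\cap T_{n,k}^{\min}|$ while strictly decreasing the total slope. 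Iterating, after finitely many slope-decreasing flips one reaches $T_{n,k}^{\min}$; reversing this sequence exhibits slope-increasing flips from $T_{n,k}^{\min}$ to $T$, so $T_{n,k}^{\min}<T$ whenever $T\ne T_{n,k}^{\min}$. Since a least element of a poset is unique when it exists, $T_{n,k}^{\min}$ is the unique least element for $<$.

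The main obstacle is the middle step: verifying that the two $k$-stars $R,S$ with common bisector $e$ genuinely share an edge, and that the shared edge $f$ to be flipped is non-fan (so that progress is made, rather than trading one fan edge for another). Two $k$-stars sharing a bisector need not share an edge in general, so the extremal choice of $e$ is essential here. I expect to control this using Lemma~\ref{paralleledges}: the $k$ ``separating'' pairs of parallel edges of $R$ and $S$ sandwich the region between them, and the minimality of $m$ should force one such pair to coincide into a single edge $f$ with both endpoints in $\llbracket k,n-1\rrbracket$. Making this sandwiching argument precise, together with checking that no non-fan edge of smaller slope obstructs $e$, is where the real work lies; the remainder is bookkeeping with the tools already established in Corollaries~\ref{incidences} and~\ref{findingstars} and Lemma~\ref{commonedge}.
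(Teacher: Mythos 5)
Your setup is fine: the description of $T_{n,k}^{\min}$ as the fan on $\llbracket 0,k-1\rrbracket$, the counting argument producing a non-fan edge of $T$ and a missing fan edge, the verification that a flip exchanging a non-fan edge for a fan edge is automatically slope-decreasing, and the deduction of the ``least element'' claim from the first assertion all match the paper. But the core of the lemma is precisely the step you defer to the last paragraph: showing that the two $k$-stars $R,S$ having your chosen fan edge $e$ as common bisector actually \emph{share an edge}, and that this shared edge is non-fan. This is not bookkeeping, and worse, the extremal rule you propose provably fails, already for $k=1$. Take $n=6$ and let $T$ be the zigzag triangulation with diagonals $[1,3]$, $[3,5]$, $[1,5]$. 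The missing fan edges are $[0,2]$, $[0,3]$, $[0,4]$, and your rule (smallest second endpoint) selects $e=[0,2]$. The two triangles with common bisector $[0,2]$ are $\{5,0,1\}$ (whose angle at $0$ contains $2$) and $\{1,2,3\}$ (whose angle at $2$ contains $0$); they share no edge, so no flip of $T$ creates $[0,2]$. Choosing the largest endpoint instead, $e=[0,4]$, fails the same way with the pair $\{5,0,1\}$ and $\{3,4,5\}$. The only flip of $T$ creating a fan edge is $[1,5]\to[0,3]$: the correct target is the ``middle'' missing fan edge, which no simple extremal rule applied directly to $e$ identifies. So your strategy of fixing the target edge first cannot be repaired merely by sharpening the choice of $e$ along the lines you indicate.

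The paper works in the opposite direction: it makes a double extremal choice of the edge $f$ \emph{to be flipped}, not of the target $e$. It takes $\ell$ maximal such that the column $\{[0,\ell],[1,\ell+1],\ldots,[k-1,\ell+k-1]\}$ is not contained in $T$, uses a $(k+1)$-crossing-freeness argument to produce an edge of $T$ of the form $[x,\ell+k]$ with $x\in\llbracket k,\ell-1\rrbracket$, then takes $m$ minimal with $[m,\ell+k]\in T$ and flips $f=[m,\ell+k]$ (in the example above this gives exactly $f=[1,5]$). The maximality of $\ell$ and minimality of $m$ force the $k$-star $S$ containing the angle $\angle(m,\ell+k,k-1)$ to have $0,1,\ldots,k-1$ among its vertices, and only \emph{a posteriori} does one conclude that the common bisector of the two $k$-stars containing $f$ has an endpoint in $\llbracket 0,k-1\rrbracket$, i.e., is a fan edge not in $T$. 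The identity of the added fan edge comes out of the argument; as the example shows, it cannot be prescribed in advance by a rule as simple as yours.
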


\begin{proof}
Since the second point is an immediate corollary of the first one, we only have to prove the first point.

Let $T$ be a $k$-triangulation of the $n$-gon distinct from $T_{n,k}^{min}$. Let
$$\ell=\max\{k\le i\le n-k+1\;|\;\{[0,i],[1,i+1],\ldots,[k-1,i+k-1]\}\nsubseteq T\},$$
which exists because $T\ne T_{n,k}^{min}$.

Let $0\le j\le k-1$ such that the edge $[j,\ell+j]$ is not in $T$. Let $\{[x_1,y_1],\ldots,[x_k,y_k]\}$ denote a $k$-crossing that prevent $[j,\ell+j]$ to be in $T$, with the convention that
\begin{enumerate}[(i)]
\item $x_1\cl \ldots\cl x_k\cl y_1\cl \ldots\cl y_k$,
\item if $j>0$, then $j\in\rrbracket x_j,x_{j+1}\llbracket$ and $\ell+j\in\rrbracket y_j,y_{j+1}\llbracket$,
\item if $j=0$, then $0\in\rrbracket y_k,x_1\llbracket$ and $\ell+j\in\rrbracket x_k,y_1\llbracket$.
\end{enumerate}
With this convention, we are sure that $x_k\in\llbracket k,\ell-1\rrbracket$ and $y_k\in\llbracket \ell+k,n-1\rrbracket$. If $y_k\in \rrbracket \ell+k,n-1\rrbracket$, then the set
$$\{[0,\ell],\ldots,[k-1,\ell+k],[x_k,y_k]\}$$
is a $(k+1)$-crossing of $T$. Thus $y_k=\ell+k$, and there exists an edge $[x_k,\ell+k]$ with $x_k\in\llbracket k,\ell-1\rrbracket$.

Now let $m=\min\{k\le i\le \ell-1\;|\; [i,\ell+k]\in T\}$. Let $f$ be the edge $[m,\ell+k]$, let $S$ be the $k$-star containing the angle $\angle(m,\ell+k,k-1)$ and let $R$ be the other $k$-star containing $f$. Let $s_0, \ldots,s_{k-2},s_{k-1}=k-1,s_k=m,s_{k+1},\ldots,s_{2k-1},s_{2k}=\ell+k$ denote the vertices of the $k$-star $S$ in circle order. Then $s_0\in\llbracket \ell+k+1, 0\rrbracket$, and the only way not to get a $(k+1)$-crossing is to have $s_0=0$. This implies that $s_j=j$ for all $0\le j\le k-1$. 

Let $e=e(T,f)$ be the common bisector of $R$ and $S$ and $s$ denote its vertex in $S$. Since $f=[m,\ell+k]=[s_k,s_{2k}]$ is a common edge of $R$ and $S$, it is obvious that $s\notin\{s_k,s_{2k}\}$. Moreover, since for any $0\le j\le k-1$ the interval $\rrbracket s_j,s_{j+1}\llbracket$ is empty, $s\notin\{s_{k+1},s_{k+2},\ldots,s_{2k-1}\}$. Consequently, $s\in\{s_0,\ldots,s_{k-1}\}=\llbracket 0,k-1\rrbracket$ and $e\in T_{n,k}^{\min}\setminus T$.
\end{proof}

Obviously, we get symmetric results with the $k$-triangulation $T_{n,k}^{\max}$ whose set of $k$-relevant edges is $\{[i,j]\;|\;i\in\llbracket n-k,n-1\rrbracket\;\mathrm{and}\;j\in\llbracket i+k,i-k\rrbracket\}$.

\begin{figure}
\centerline{\includegraphics[scale=1]{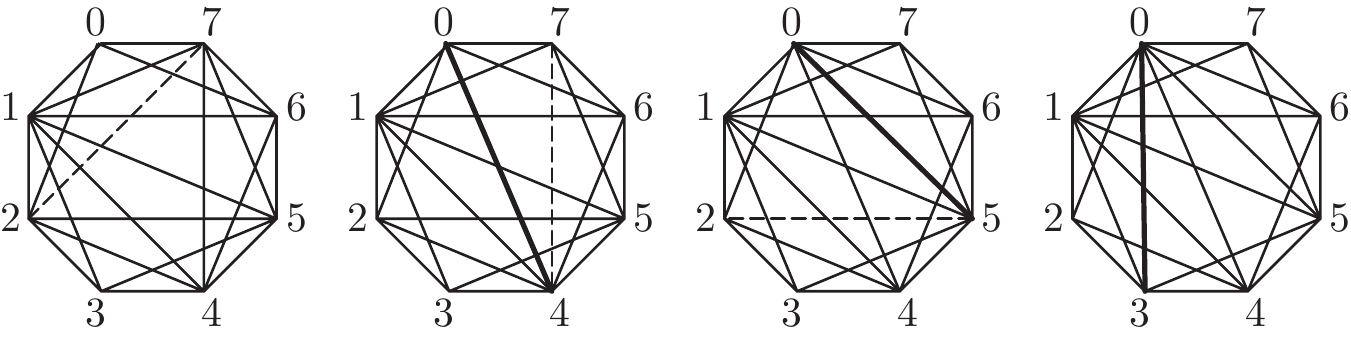}}
\caption{\small{A path of slope-decreasing flips from the $2$-triangulation of Figure~\ref{fig:2triang8points} to $T_{8,2}^{\min}$. In each image, the new edge is in bold and the dashed edge will be flipped.}}\label{fig:2triang8pointsflip}
\end{figure}

\begin{example}
\rm
Figure~\ref{fig:2triang8pointsflip} shows a path of slope-decreasing flips from the $2$-triangulation of Figure~\ref{fig:2triang8points} to $T_{8,2}^{\min}$.
\end{example}

\begin{corollary}\label{graphflips}
The graph $G_{n,k}$ is connected, regular of degree $k(n-2k-1)$, and its diameter is at most $2k(n-2k-1)$.
\end{corollary}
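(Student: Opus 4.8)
The plan is to deduce Corollary~\ref{graphflips} almost entirely from the results already assembled, chaining together three separate facts: regularity from the flip analysis of Section~\ref{sectionflips}, connectivity from Lemma~\ref{connexity}, and the diameter bound from a counting argument on slope-increasing and slope-decreasing flip paths. The regularity claim requires no new work: we observed right after Lemma~\ref{lemma:flip} that $G_{n,k}$ is regular of degree $k(n-2k-1)$, since by Corollary~\ref{starsenumeration} every $k$-triangulation has exactly $k(n-2k-1)$ $k$-relevant edges, and by Lemma~\ref{lemma:flip} each such edge can be flipped in a unique way, yielding a distinct neighbour. So the first step is simply to restate this.

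For connectivity and the diameter, the strategy is to route every $k$-triangulation to the fixed base point $T_{n,k}^{\min}$. First I would invoke Lemma~\ref{connexity}: for any $T \ne T_{n,k}^{\min}$ there is a $k$-relevant edge $f \in T \setminus T_{n,k}^{\min}$ whose flip inserts an edge belonging to $T_{n,k}^{\min}$. The key point is that such a flip strictly increases $|T \cap T_{n,k}^{\min}|$, the number of edges shared with the target, while never removing an edge of $T_{n,k}^{\min}$ (the flipped edge $f$ lies outside $T_{n,k}^{\min}$). Hence starting from any $T$ we can repeatedly apply Lemma~\ref{connexity} and strictly increase the overlap with $T_{n,k}^{\min}$ at each step, reaching $T_{n,k}^{\min}$ in finitely many flips. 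This simultaneously proves that every vertex is connected to $T_{n,k}^{\min}$, and therefore that $G_{n,k}$ is connected, and gives a bound on the distance $d(T, T_{n,k}^{\min})$.

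To quantify the diameter bound of $2k(n-2k-1)$, I would count how many flips such a descent can take. Since each flip from the procedure of Lemma~\ref{connexity} inserts a fresh edge of $T_{n,k}^{\min}$ that was not previously present and never deletes one, and $T_{n,k}^{\min}$ has exactly $k(n-2k-1)$ $k$-relevant edges while $T$ shares all its $kn$ non-relevant ($k$-boundary and $k$-irrelevant) edges with $T_{n,k}^{\min}$ automatically, the number of $k$-relevant edges of $T_{n,k}^{\min}$ still missing from $T$ is at most $k(n-2k-1)$. Thus $d(T, T_{n,k}^{\min}) \le k(n-2k-1)$ for every $T$. The triangle inequality then gives, for any two $k$-triangulations $T$ and $T'$,
\begin{equation*}
d(T, T') \le d(T, T_{n,k}^{\min}) + d(T_{n,k}^{\min}, T') \le 2k(n-2k-1),
\end{equation*}
which is exactly the asserted diameter bound.

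The step I expect to require the most care is verifying that each application of Lemma~\ref{connexity} genuinely makes monotone progress toward $T_{n,k}^{\min}$, so that the descent terminates and the flip count is controlled. Concretely, I must check that the edge $e$ inserted by the flip lies in $T_{n,k}^{\min} \setminus T$ (so the overlap strictly grows) and that the removed edge $f$ lies outside $T_{n,k}^{\min}$ (so no shared edge is ever lost); both are guaranteed by the statement $f \in T \setminus T_{n,k}^{\min}$ and $e \in T_{n,k}^{\min}$ in Lemma~\ref{connexity}. Once this monotonicity is pinned down, the potential function $|T_{n,k}^{\min} \setminus T|$ decreases by exactly one at each step and starts at most at $k(n-2k-1)$, so everything else is bookkeeping. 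One mild subtlety worth a sentence is that the flips produced are automatically slope-decreasing (as in Figure~\ref{fig:2triang8pointsflip}), consistent with the poset structure and with $T_{n,k}^{\min}$ being the least element, but this refinement is not needed for the connectivity or diameter claims themselves.
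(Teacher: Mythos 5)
Your proposal is correct and follows essentially the same route as the paper: regularity from Lemma~\ref{lemma:flip} together with Corollary~\ref{starsenumeration}, then repeated application of Lemma~\ref{connexity} to reach $T_{n,k}^{\min}$ in at most $k(n-2k-1)$ flips, and the triangle inequality through $T_{n,k}^{\min}$ for the diameter. The only difference is that you spell out the monotonicity of the potential $|T_{n,k}^{\min}\setminus T|$, which the paper leaves implicit.
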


\begin{proof}
Let $T$ be a $k$-triangulation of the $n$-gon. The regularity follows from the fact that any of the $k(n-2k-1)$ $k$-relevant edges of $T$ can be flipped. Moreover, the previous lemma ensures that there exists a sequence of at most $k(n-2k-1)$ slope-decreasing flips from $T$ to $T_{n,k}^{\min}$. Thus any pair of $k$-triangulations is linked by a path of lenght at most $2k(n-2k-1)$ passing through $T_{n,k}^{\min}$.
\end{proof}

Obviously we would obtain the same result with any rotation of the labelling of $V_n$. In particular, for any pair $\{T,T'\}$ of $k$-triangulations, we can choose the intermediate $k$-triangulation $T''$ for our path among any rotation of $T_{n,k}^{\min}$. Consequently, there exists a path linking $T$ and $T'$ of length smaller than the average of $|T\triangle T''|+|T''\triangle T'|$ (for $T''$ among the rotations of $T_{n,k}^{\min}$). As proved in~\cite{n-gdfcp-00}, this argument improves the upper bound for 
the diameter  to be $2k(n-4k-1)$ when $n>8k^3+4k^2$.

Note that even if the improvement is asymptotically not relevant, for the case $k=1$, the improved bound of $2n-10$ is actually the exact diameter of the associahedron for large values of $n$~\cite{stt-rdthg-88}. For $k>1$, we only have the following lower bound:

\begin{lemma}\label{diamin}
If $n\ge 4k$, then the diameter of $G_{n,k}$ is at least $k(n-2k-1)$.
\end{lemma}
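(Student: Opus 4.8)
The plan is to bound the diameter from below by the ``edit distance'' between two carefully chosen $k$-triangulations. Recall that a single flip replaces one edge by another, so if $T'=T\triangle\{e,f\}$ then $|T\triangle T'|=2$. Since $(X,Y)\mapsto|X\triangle Y|$ is a metric on the subsets of $E_n$ (the Hamming/edit distance), the triangle inequality gives, for any flip path $T=T_0,T_1,\dots,T_m=T'$ in $G_{n,k}$,
\[
|T\triangle T'|\ \le\ \sum_{i=0}^{m-1}|T_i\triangle T_{i+1}|\ =\ 2m .
\]
Hence the flip distance between $T$ and $T'$ is at least $\tfrac12|T\triangle T'|$, and to bound $\mathrm{diam}(G_{n,k})$ it suffices to exhibit two $k$-triangulations whose symmetric difference is as large as possible.

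Because every $k$-triangulation contains the same $kn$ $k$-boundary and $k$-irrelevant edges and, by Corollary~\ref{starsenumeration}, exactly $k(n-2k-1)$ $k$-relevant edges, we have $|T\triangle T'|\le 2k(n-2k-1)$ for any pair, with equality precisely when $T$ and $T'$ share no $k$-relevant edge. So the whole problem reduces to constructing \emph{two $k$-triangulations of the $n$-gon with no common $k$-relevant edge.} Note that the total number of $k$-relevant edges in $E_n$ is $\binom n2-kn=\tfrac12 n(n-2k-1)$, and two disjoint families of $k(n-2k-1)$ relevant edges can coexist exactly when $\tfrac12 n(n-2k-1)\ge 2k(n-2k-1)$, i.e. when $n\ge 4k$. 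Since this is exactly the hypothesis, I expect the intended proof to produce such a disjoint pair, with $n\ge 4k$ being the sharp threshold for its existence (and $n=4k$ forcing a perfect partition of \emph{all} relevant edges into two $k$-triangulations).

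The main obstacle is the construction itself, since the obvious candidates fail. The explicit triangulations $T_{n,k}^{\min}$ and $T_{n,k}^{\max}$ both contain, for instance, the relevant edge $[k-1,n-k]$ (which has length $2k-1>k$ once $n\ge 4k$); more generally, for $k\ge 2$ no two ``fan'' triangulations (all of whose relevant edges are incident to a fixed block of $k$ vertices) can be relevant-disjoint, because avoiding a relevant edge between the two blocks $A,B$ would force $A\cup B$ into a single arc of $k+1$ vertices, impossible since $|A\cup B|=2k$. Thus a genuinely non-fan construction is needed. I would write down two explicit edge sets $T,T'\subseteq E_n$, each of cardinality $k(2n-2k-1)$, built from the boundary/irrelevant edges together with two interleaving families of relevant edges (in the verified small case $k=2$, $n=8$ one may take a suitable balanced $T$ and its rotation $T'$ by $k$ positions). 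Crucially, to certify that each candidate is a genuine $k$-triangulation I would invoke Corollary~\ref{starsenumeration}(2): any $(k+1)$-crossing-free set of cardinality $k(2n-2k-1)$ is automatically a $k$-triangulation. This removes any maximality check and splits the work into (i) verifying that each candidate is $(k+1)$-crossing-free and (ii) verifying that $T$ and $T'$ share no relevant edge.

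The hardest step will be (i), the $(k+1)$-crossing-freeness of the constructed families. Here I would either analyse mutual crossings directly --- exploiting that every $(k+1)$-crossing consists solely of $k$-relevant edges, each having at least $k$ vertices on each side --- or, more in the spirit of this paper, set up the construction inductively and control crossings through the inflation of a single $k$-star (Section~\ref{sectionflatinflat}), so that a disjoint pair for a larger polygon is produced from one for a smaller polygon while preserving both crossing-freeness and edge-disjointness. Once a disjoint pair $T,T'$ is in hand, the first paragraph immediately yields $\mathrm{diam}(G_{n,k})\ge\tfrac12|T\triangle T'|=k(n-2k-1)$.
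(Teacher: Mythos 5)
Your opening reduction is correct and is exactly the paper's framing: a flip changes one edge, so the flip distance is at least $\tfrac12|T\triangle T'|$, and since all $k$-triangulations share the $kn$ non-relevant edges and have exactly $k(n-2k-1)$ relevant ones (Corollary~\ref{starsenumeration}), it suffices to exhibit two $k$-triangulations with no common $k$-relevant edge; you also correctly plan to certify candidates via Corollary~\ref{starsenumeration}(2) (crossing-free $+$ right cardinality $\Rightarrow$ $k$-triangulation), and your observation that $T_{n,k}^{\min}$ and $T_{n,k}^{\max}$ fail for $k\ge 2$ is accurate. But the proof stops precisely where the actual mathematical content begins: you never produce the two edge sets. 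Phrases like ``I would write down two explicit edge sets'' and ``I would either analyse mutual crossings directly \dots or set up the construction inductively'' describe intentions, not arguments, and the crossing-freeness verification you yourself identify as the hardest step is entirely absent. As it stands this is a correct reduction plus an unproved existence claim, so it is not a proof.

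For comparison, the paper's construction is short: it defines a \emph{$k$-zigzag}
$Z=\{[q-1,-q-k]\;|\;1\le q\le\lfloor n/2-k\rfloor\}\cup\{[q,-q-k]\;|\;1\le q\le\lfloor (n-1)/2-k\rfloor\}$,
a ``snake'' of $n-2k-1$ $k$-relevant edges containing no $2$-crossing at all. Hence a union of $k$ zigzags is automatically $(k+1)$-crossing-free (any $(k+1)$-crossing would need two edges from one zigzag), and a union of $k$ \emph{disjoint} zigzags has exactly $k(n-2k-1)$ relevant edges, so by Corollary~\ref{starsenumeration} it is the relevant-edge set of a $k$-triangulation. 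The hypothesis $n\ge 4k$ enters only to guarantee that the $2k$ rotated copies $Z,\rho(Z),\dots,\rho^{2k-1}(Z)$ (where $\rho:t\mapsto t+1$) are pairwise disjoint; then $\bigcup_{i=0}^{k-1}\rho^i(Z)$ and $\bigcup_{i=k}^{2k-1}\rho^i(Z)$ are the desired relevant-disjoint pair. (Incidentally, this realizes your guess that the second triangulation is the first rotated by $k$ positions.) The lesson is that the ``interleaving family'' you were searching for should be chosen so that crossing-freeness is trivial by design --- a self-non-crossing path repeated by rotation --- rather than something requiring a delicate mutual-crossing analysis or an induction. Also note that your counting remark only shows $n\ge 4k$ is \emph{necessary} for a relevant-disjoint pair to exist; sufficiency is exactly the construction you are missing.
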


\begin{proof}
We only have to find two $k$-triangulations of the $n$-gon with no $k$-relevant edges in common. Let $Z$ be the following subset of $k$-relevant edges of $E_n$:
\begin{eqnarray*}
Z & = & \left\{[q-1,-q-k]\;|\; 1\le q\le \lfloor n/2-k\rfloor\right\}\\
&&\cup\left\{[q,-q-k]\;|\; 1\le q\le \lfloor (n-1)/2-k\rfloor\right\}.
\end{eqnarray*}
We say that $Z$ is a \emph{$k$-zigzag} of $E_n$ (see Fig.~\ref{zigzags}). Let $\rho$ be the rotation $t\mapsto t+1$. Observe that since $n\ge 4k$, the $2k$ $k$-zigzags $Z,\rho(Z),\ldots,\rho^{2k-1}(Z)$ are disjoint. Moreover,
\begin{enumerate}[(i)]
\item there is no $2$-crossing in a $k$-zigzag, so that there is no $(k+1)$-crossing in the union of $k$ of them;
\item a $k$-zigzag contains $n-2k-1$ $k$-relevant edges, so that the union of $k$ disjoint $k$-zigzags contains $k(n-2k-1)$ $k$-relevant edges.
\end{enumerate}
According to Corollary~\ref{starsenumeration}, this proves that the union of $k$ disjoint $k$-zigzag is the set of $k$-relevant edges of a $k$-triangulation. Thus, we obtain the two $k$-triangulations we were looking for with the sets of $k$-relevant edges $\bigcup_{i=0}^{k-1} \rho^i(Z)$ and $\bigcup_{i=k}^{2k-1} \rho^i(Z)$.
\begin{figure}
\centerline{\includegraphics[scale=1]{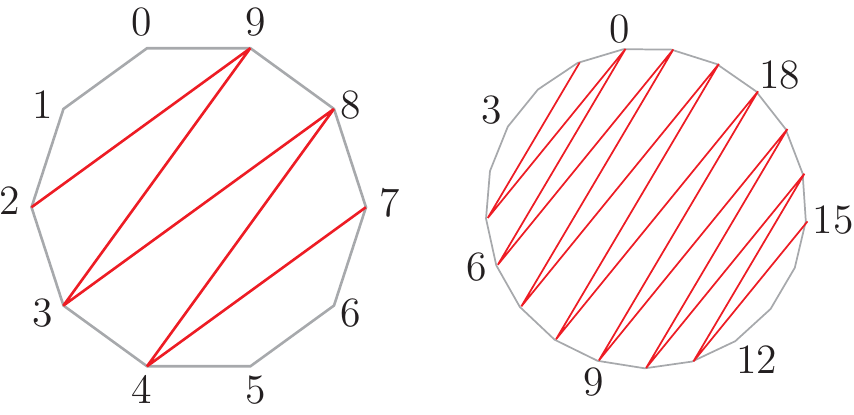}}
\caption{\small{Examples of $k$-zigzags of $E_n$ for $(n,k)=(10,2)$ and $(21,3)$.}}\label{zigzags}
\end{figure}
\end{proof}

We want to observe that Lemma~\ref{connexity} is the way how~\cite{n-gdfcp-00} and \cite{dkm-lahp-02} prove that the number of edges in all $k$-triangulations of the $n$-gon is the same. Compare it to our direct proof in Corollary~\ref{starsenumeration}.


\section{$k$-ears and $k$-colorable $k$-triangulations}\label{sectionears}

Let us assume that $n\ge 2k+3$. We call a \emph{$k$-ear} any edge of length $k+1$ in a $k$-triangulation. We say that a $k$-star is \emph{external} if it contains at least one $k$-boundary edge (and \emph{internal} otherwise).
It is well known and easy to prove that the number of ears in any triangulation equals its number of internal triangles plus $2$. In this section, we prove that the number of $k$-ears in any $k$-triangulation equals its number of internal $k$-stars plus $2k$. Then we characterize the triangulations that have no internal $k$-star in terms of colorability of their intersection complex.

Let $S$ be a $k$-star and $[u,v]$ be an edge of $S$ such that $S$ lies on the positive side of the oriented edge from $u$ to $v$. We say that $[u,v]$ is a \emph{positive ear} of $S$ if $|\llbracket v,u\llbracket|=k+1$. Said differently, $[u,v]$ is an ear, and $S$ is the unique (by Corollary~\ref{cor:numberofstars}) $k$-star on the ``outer" side of it.

\begin{lemma}\label{starwithboundary}
Let $b\ge 1$ be the number of $k$-boundary edges of an external $k$-star $S$. Then, $S$ has exactly $b-1$ positive ears. Moreover, $k$-boundary edges and positive ears of $S$ form an alternating path in the $k$-star.
\end{lemma}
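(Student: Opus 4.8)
The plan is to encode the $k$-star $S$ by the ``gaps'' it leaves on the circle. Writing $s_0\cl s_1\cl\cdots\cl s_{2k}$ for the vertices of $S$ in circle order and $f_j=[s_j,s_{j+k}]$ (indices mod $2k+1$) for its edges, let $g_j\ge 0$ be the number of points of $V_n$ strictly between $s_j$ and $s_{j+1}$, so that $\sum_j g_j=n-2k-1\ge 2$. My first step is to translate the two notions in the statement into conditions on the $g_j$. Since the arc $\llbracket s_j,s_{j+k}\llbracket$ contains exactly $k$ vertices of $S$ together with $g_j+\cdots+g_{j+k-1}$ other points, while the complementary arc is strictly longer, the length of $f_j$ equals $k+\sum_{i=j}^{j+k-1}g_i$; hence $f_j$ is a \emph{$k$-boundary} edge exactly when $g_j=\cdots=g_{j+k-1}=0$. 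Orienting $f_j$ from $s_j$ to $s_{j+k}$ puts $S$ on the positive side (its far arc carries $k+2$ vertices of $S$), and $|\llbracket s_{j+k},s_j\llbracket|=(k+1)+\sum_{i=j+k}^{j+2k}g_i$; using $n\ge 2k+3$ to guarantee this length is really $k+1$, I get that $f_j$ is a \emph{positive ear} exactly when $g_{j+k}=\cdots=g_{j+2k}=0$.

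Thus, reading the indices $0,\dots,2k$ cyclically and marking those $i$ with $g_i=0$, a $k$-boundary edge corresponds to a window of $k$ consecutive marked indices and a positive ear to a window of $k+1$ consecutive marked indices. The heart of the argument is then a pigeonhole observation: the marked indices form maximal runs of lengths $\ell_1,\dots,\ell_m$, and I claim at most one $\ell_t$ is $\ge k$. Indeed, two runs of length $\ge k$ on a cycle of length $2k+1$ would be separated by at least one unmarked index on each side, forcing at least $2k+2>2k+1$ indices. Since $S$ is external we have $b\ge 1$, so at least one run has length $\ge k$; hence exactly one does, of some length $\ell$ with $k\le\ell\le 2k$ (the upper bound because $\sum g_i>0$ leaves an unmarked index), and every other run has length at most $k-1$ and so contributes neither boundary edges nor ears. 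Counting windows inside the single long run gives $b=\ell-k+1$ boundary edges and $\ell-k$ positive ears, which is the first assertion $\#\{\text{ears}\}=b-1$.

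For the ``alternating path'' assertion I would make the long run explicit: after rotating the labels I may assume $g_0=\cdots=g_{\ell-1}=0$ while $g_\ell$ and $g_{2k}$ are nonzero, so that $s_0,s_1,\dots,s_\ell$ are consecutive on the circle. The boundary edges are then $B_j:=[s_j,s_{j+k}]$ for $0\le j\le \ell-k$, and rewriting the ear condition shows the positive ears are $E_a:=[s_a,s_{a+k+1}]$ for $0\le a\le \ell-k-1$. These interlock: $B_a$ and $E_a$ meet at $s_a$, while $E_a$ and $B_{a+1}$ meet at $s_{a+k+1}=s_{(a+1)+k}$. Since each vertex of $S$ lies on exactly two edges of $S$, consecutive edges of the chain
$$B_0,\; E_0,\; B_1,\; E_1,\; \ldots,\; E_{\ell-k-1},\; B_{\ell-k}$$
are consecutive in the star order, so the chain is a genuine path in $S$, alternating between boundary edges and ears and beginning and ending with a boundary edge. (Its length is $2(\ell-k)+1\le 2k+1$, with equality — the degenerate case where the path sweeps out the whole star — exactly when $\ell=2k$.)

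The step I expect to be the most delicate is the translation in the first paragraph, specifically pinning down that the positive-ear equality $|\llbracket s_{j+k},s_j\llbracket|=k+1$ really forces $f_j$ to have length $k+1$ (an ``ear'') and that $S$ sits on its positive side; this is where the hypothesis $n\ge 2k+3$ enters, and it is easy to conflate the two arcs of $f_j$. Once the dictionary between edges of $S$ and zero-windows of the gap vector is in place, the pigeonhole uniqueness of the long run and the explicit interlocking chain are short and essentially combinatorial.
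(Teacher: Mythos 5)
Your proof is correct and is essentially the paper's own argument rewritten in ``gap vector'' notation: your windows of $k$ (resp.\ $k+1$) consecutive zero gaps are exactly the paper's observation that a $k$-boundary edge $[x,x+k]$ forces all $k+1$ vertices of $\llbracket x,x+k\rrbracket$ to be vertices of $S$ (resp.\ that the interleaved edges $[i,i+k+1]$ are positive ears), your pigeonhole on maximal runs is its remark that two such $(k+1)$-point intervals cannot be disjoint among the $2k+1$ vertices of $S$, and your interlocking chain is its alternating path. One local slip should be repaired: the arc $\llbracket s_{j+k},s_j\llbracket$ is \emph{not} always strictly longer than $\llbracket s_j,s_{j+k}\llbracket$ (all $n-2k-1$ non-vertices of $S$ may lie in the latter), so your stated formula for the length of $f_j$ is false in general; what is true, and is all you actually use, is that $\llbracket s_{j+k},s_j\llbracket$ always contains at least $k+1$ points, so that $f_j$ has length $k$ if and only if $g_j=\cdots=g_{j+k-1}=0$.
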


\begin{proof}
Observe first that if $[x,x+k]$ is a $k$-boundary edge of $S$, then all $k+1$ vertices of $\llbracket x,x+k\rrbracket$ are vertices of $S$. Since $S$ has $2k+1$ vertices, this implies that if $[x,x+k]$ and $[y,y+k]$ are two $k$-boundary edges of $S$, then $\llbracket x,x+k\rrbracket$ and $\llbracket y,y+k\rrbracket$ intersect, that is for example $y\in \llbracket x,x+k\rrbracket$. But then, all edges $[i,i+k]$ ($x\cle i\cle y$) are $k$-boundary edges of $S$, and all edges $[i,i+k+1]$ ($x\cle i\cl y$) are positive ears of $S$. Thus, $k$-boundary edges and positive ears of $S$ form an alternating path in the $k$-star, begining and ending by a $k$-boundary edge. In particular, $S$ has exactly $b-1$ positive ears.
\end{proof}

\begin{corollary}\label{earsenumeration}
The number of $k$-ears in a $k$-triangulation $T$ equals the number of internal $k$-stars plus $2k$.
In particular, $T$ contains at least $2k$ $k$-ears.
\end{corollary}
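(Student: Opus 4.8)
The plan is to count the $k$-ears of $T$ by counting, over all $k$-stars of $T$, their \emph{positive ears}, reducing the external stars to Lemma~\ref{starwithboundary} and showing that internal stars contribute nothing.

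First I would show that each $k$-ear is a positive ear of exactly one $k$-star. Given a $k$-ear $[u,v]$ (an edge of length $k+1$), the hypothesis $n\ge 2k+3$ guarantees that its two circular arcs have different lengths, so there is a well-defined orientation, say $u\to v$, with $|\llbracket v,u\llbracket|=k+1$. By Corollary~\ref{cor:numberofstars} there is then exactly one $k$-star on the positive side of $u\to v$; since $[u,v]$ is $k$-relevant it lies, by Corollary~\ref{incidences}(i), in one $k$-star on each side, so that unique positive-side star $S$ must be the one containing $[u,v]$. Thus $[u,v]$ is a positive ear of $S$, and of no other star, while conversely every positive ear is a $k$-ear by definition. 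Hence the number of $k$-ears equals $\sum_{S}(\text{number of positive ears of }S)$, the sum ranging over all $k$-stars of $T$.

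The key remaining point, which I expect to be the main obstacle, is to check that an internal $k$-star has \emph{no} positive ear, so that only external stars contribute. Suppose $[u,v]$ were a positive ear of an internal star $S$ with vertex set $V(S)$. Unwinding the definition of ``positive side'': since $u,v\in V(S)$, the counts $|\llbracket u,v\rrbracket\cap V(S)|$ and $|\llbracket v,u\rrbracket\cap V(S)|$ sum to $2k+3$, so the larger is at least $k+2$; but $|\llbracket v,u\rrbracket|=k+2$, which forces \emph{all} $k+2$ vertices of the short arc $\llbracket v,u\rrbracket$ to lie in $S$. These are $k+2$ consecutive vertices of the $n$-gon belonging to $S$, hence consecutive in the circle order of $S$, so $[v,v+k]$ is an edge of $S$ of length $k$, i.e.\ a $k$-boundary edge, contradicting that $S$ is internal.

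Finally I would assemble the count. For an external star $S$ with $b(S)\ge 1$ boundary edges, Lemma~\ref{starwithboundary} gives exactly $b(S)-1$ positive ears, so the number of $k$-ears is $\sum_{S\text{ external}}(b(S)-1)$. Because every $k$-boundary edge lies in exactly one $k$-star (Corollary~\ref{incidences}(ii)) and there are exactly $n$ such edges, $\sum_{S}b(S)=n$ over all stars (internal ones having $b(S)=0$). Writing $I$ and $E$ for the numbers of internal and external $k$-stars and using that $T$ has $n-2k$ stars in total (Corollary~\ref{starsenumeration}), the number of $k$-ears equals $n-E=n-\bigl((n-2k)-I\bigr)=2k+I$, which is precisely the number of internal $k$-stars plus $2k$; in particular it is at least $2k$.
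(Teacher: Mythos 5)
Your proof is correct and follows essentially the same route as the paper's: counting $k$-ears as positive ears of $k$-stars, applying Lemma~\ref{starwithboundary} to external stars, and double-counting the $n$ $k$-boundary edges against the $n-2k$ stars of Corollary~\ref{starsenumeration}. The only difference is that you explicitly verify two points the paper leaves implicit --- that each $k$-ear is a positive ear of exactly one $k$-star (via Corollaries~\ref{cor:numberofstars} and~\ref{incidences}) and that internal $k$-stars have no positive ears --- both of which you argue correctly.
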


\begin{proof}
For any $0\le i\le 2k+1$, let $\mu_i$ denote the number of $k$-stars of $T$ with exactly $i$ $k$-boundary edges. Let $\nu$ denote the number of $k$-ears of $T$. Then
$$\sum_{i=0}^{2k+1} \mu_i=n-2k \quad \mathrm{and}\quad \sum_{i=1}^{2k+1} i\mu_i=n.$$
Since any $k$-ear is a positive ear of exactly one $k$-star, the previous lemma ensures moreover that
$$\sum_{i=1}^{2k+1} (i-1)\mu_i=\nu.$$
Thus, we obtain $\nu=n-(n-2k)+\mu_0=2k+\mu_0$.
\end{proof}

\begin{example}
\rm
The $2$-triangulation in Figure~\ref{fig:2triang8points} has five $2$-ears and one internal $2$-star (the $2$-star (a) in Figure~\ref{fig:2triang8pointsstars}).
\end{example}

We are now interested in a characterization of the $k$-triangulations that have exactly $2k$ $k$-ears, or equivalently that have no internal $k$-star. We need the following definitions.

We say that a $k$-triangulation is \emph{$k$-colorable} if it is possible to color its $k$-relevant edges with $k$ colors such that there is no monochromatic $2$-crossing. Observe that, if this happens, then every $k$-crossing uses an edge of each color.

\begin{definition}
A \emph{$k$-accordion} of $E_n$ is a set $Z=\{[a_i,b_i]\;|\; 1\le i\le n-2k-1\}$ of $n-2k-1$ edges such that $b_1=a_1+k+1$ and for any $2\le i\le n-2k-1$, the edge $[a_i,b_i]$ is either $[a_{i-1},b_{i-1}+1]$ or $[a_{i-1}-1,b_{i-1}]$ (see Fig.~\ref{accordion}).
\end{definition}

\begin{figure}
\centerline{\includegraphics[scale=1]{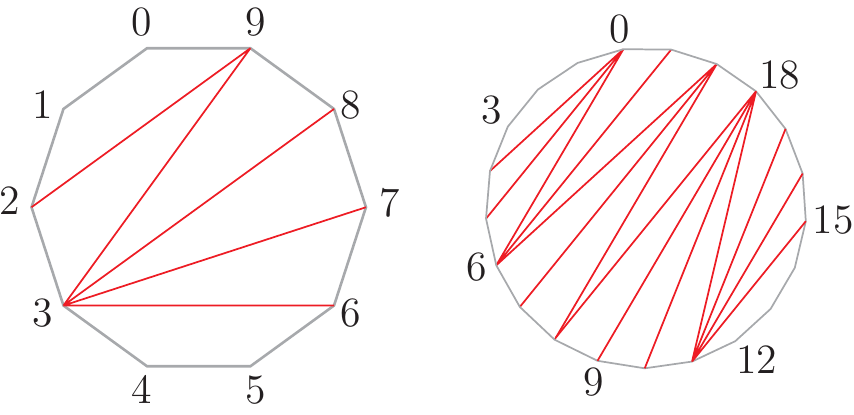}}
\caption{\small{Examples of $k$-accordions of $E_n$ for $(n,k)=(10,2)$ and $(21,3)$.}}\label{accordion}
\end{figure}

Observe that, in this definition, $b_i=a_i+k+i$ for any $1\le i\le n-2k-1$. Moreover, it is easy to check that the definition is equivalent to being a set of $n-2k-1$ $k$-relevant edges of $E_n$ without $2$-crossing.

\begin{lemma}
The union of $k$ disjoint $k$-accordions of $E_n$ is the set of $k$-relevant edges of a $k$-triangulation of the $n$-gon.
\end{lemma}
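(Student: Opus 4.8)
The plan is to show that a union $Z_1 \cup \cdots \cup Z_k$ of $k$ pairwise-disjoint $k$-accordions is exactly the set of $k$-relevant edges of some $k$-triangulation, by appealing to the cardinality characterization in Corollary~\ref{starsenumeration}(2). That corollary says that a $(k+1)$-crossing-free subset of $E_n$ of cardinality $k(2n-2k-1)$ is automatically a $k$-triangulation. Since every $k$-triangulation contains all $kn$ $k$-irrelevant and $k$-boundary edges, it suffices to exhibit $k(n-2k-1)$ $k$-relevant edges forming a $(k+1)$-crossing-free set; adding the fixed $kn$ short edges then gives the required total $k(n-2k-1) + kn = k(2n-2k-1)$. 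So the two things I need to verify are (a) the count, and (b) the crossing-free property.

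The count is immediate from the definition: each $k$-accordion has exactly $n-2k-1$ edges, and since the accordions are assumed pairwise disjoint, the union has $k(n-2k-1)$ edges, all $k$-relevant. This matches the number of $k$-relevant edges of a $k$-triangulation given in Corollary~\ref{starsenumeration}(1).

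For the crossing-free property, the \textbf{key observation} — already remarked just before the statement — is that a $k$-accordion is precisely a maximal chain of $k$-relevant edges containing \emph{no} $2$-crossing. Concretely, two edges $[a_i,b_i]$ and $[a_{i'},b_{i'}]$ of the same accordion never cross, because the endpoints move monotonically: the left endpoint $a_i$ is non-increasing and the right endpoint $b_i$ is non-decreasing as $i$ grows, so one edge always nests inside the cyclic wedge determined by the other without the open segments meeting. Granting this, suppose for contradiction that the union contained a $(k+1)$-crossing, i.e.\ $k+1$ pairwise-crossing edges. By the pigeonhole principle, since there are only $k$ accordions, two of these $k+1$ edges lie in the same accordion $Z_j$; but those two edges cross each other, contradicting the fact that no single accordion contains a $2$-crossing. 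Hence the union is $(k+1)$-crossing-free.

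\textbf{The main obstacle} is the geometric verification that a single $k$-accordion is $2$-crossing-free, i.e.\ that the monotone evolution of the endpoints $[a_i,b_i]$ truly prevents any two edges from crossing — this is the one place where one must argue carefully about the cyclic order, checking that the two elementary moves $[a,b]\mapsto[a,b+1]$ and $[a,b]\mapsto[a-1,b]$ each preserve non-crossing with all previously placed edges and that the relations $b_i = a_i + k + i$ keep all edges $k$-relevant and the endpoints from wrapping around past each other. Once this local claim is in hand (and it is exactly the ``equivalent reformulation'' noted before the statement), the rest of the argument is a one-line pigeonhole combined with the cardinality criterion of Corollary~\ref{starsenumeration}(2), so I would state the single-accordion non-crossing property as the crux and then assemble the pieces.
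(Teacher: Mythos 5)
Your proof is correct and follows essentially the same route as the paper: count the edges ($k(n-2k-1)$ relevant ones from $k$ disjoint accordions), use the pigeonhole principle together with the fact that a single $k$-accordion contains no $2$-crossing to rule out $(k+1)$-crossings, and conclude via the cardinality criterion of Corollary~\ref{starsenumeration}. The paper states these same two observations even more tersely (relying on the remark preceding the lemma that an accordion is $2$-crossing-free), so your write-up just fills in the details the paper leaves implicit.
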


\begin{proof}
Observe that
\begin{enumerate}[(i)]
\item there is no $2$-crossing in a $k$-accordion, so that there is no $(k+1)$-crossing in the union of $k$ of them;
\item a $k$-accordion contains $n-2k-1$ $k$-relevant edges, so that the union of $k$ disjoint $k$-accordions contains $k(n-2k-1)$ $k$-relevant edges.
\end{enumerate}
This lemma thus follows from Corollary~\ref{starsenumeration}.
\end{proof}

We have already met some particular $k$-accordions both when we constructed the triangulation $T_{n,k}^{\min}$ and in the proof of Lemma~\ref{diamin}. The two types of $k$-accordions in these examples (``fan" and ``zigzag") are somehow the two extremal examples of them.

\begin{theorem}\label{colorable}
Let $T$ be a $k$-triangulation, with $k>1$. The following properties are equivalent for $T$:
\begin{enumerate}[(i)]
\item $T$ is $k$-colorable;
\item there exists a $k$-coloring of the $k$-relevant edges of $T$ such that no $k$-star of $T$ contains three edges of the same color.
\item $T$ has no internal $k$-star;
\item $T$ contains exactly $2k$ $k$-ears;
\item $T$ contains exactly $2k$ edges of each length $k+1,\dots, \lfloor (n-1)/2 \rfloor$ (and $k$ of length $n/2$ if $n$ is even);
\item the set of $k$-relevant edges of $T$ is the union of $k$ disjoint $k$-accordions.
\end{enumerate}
\end{theorem}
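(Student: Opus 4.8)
The plan is to prove the theorem by a cycle of implications, most of which are short, isolating a single genuinely hard arrow. Concretely I would establish $(i)\Rightarrow(ii)\Rightarrow(iii)\Leftrightarrow(iv)$, together with $(i)\Leftrightarrow(vi)$ and $(vi)\Rightarrow(v)\Rightarrow(iv)$, and then close everything with $(iii)\Rightarrow(i)$. The equivalence $(iii)\Leftrightarrow(iv)$ is immediate from Corollary~\ref{earsenumeration}: the number of $k$-ears of $T$ equals the number of internal $k$-stars plus $2k$, so $T$ has exactly $2k$ $k$-ears if and only if it has no internal $k$-star.

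For $(i)\Rightarrow(ii)$ I would first record the crossing pattern inside a single $k$-star. Writing its edges in star order as $[r_0,r_1],\dots,[r_{2k},r_0]$, two consecutive edges share a vertex and hence do not cross, while a direct computation shows that any two non-consecutive edges do cross; thus the \emph{non}-crossing graph on the $2k+1$ edges of a $k$-star is exactly the $(2k+1)$-cycle of star-adjacencies. For $k>1$ this cycle is triangle-free, so \emph{any three edges of a $k$-star contain a crossing pair}. Consequently, in a $k$-colouring with no monochromatic $2$-crossing no $k$-star can carry three edges of one colour, which is $(ii)$. For $(ii)\Rightarrow(iii)$ note that every edge of a $k$-star is at least a $k$-boundary, so all $2k+1$ edges of an \emph{internal} $k$-star are $k$-relevant and therefore coloured; since $2k+1>2k$, the pigeonhole principle forces one colour to occur at least three times, contradicting $(ii)$. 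Hence $T$ has no internal $k$-star.

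The structural equivalences come from a size count. For $(i)\Rightarrow(vi)$, each colour class is a set of $k$-relevant edges without $2$-crossing, hence (by the characterisation in the remark following the definition of a $k$-accordion, together with the elementary bound that a pairwise non-crossing family of $k$-relevant edges has at most $n-2k-1$ elements) has at most $n-2k-1$ edges. As the $k$ classes partition the $k(n-2k-1)$ $k$-relevant edges of $T$ (Corollary~\ref{starsenumeration}), each class must have exactly $n-2k-1$ edges and thus be a $k$-accordion, giving $(vi)$. Conversely $(vi)\Rightarrow(i)$: colour each of the $k$ disjoint accordions with its own colour; a $k$-accordion has no $2$-crossing, so this is a valid $k$-colouring. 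For $(vi)\Rightarrow(v)$ I would compute lengths inside a single accordion $\{[a_i,a_i+k+i]\mid 1\le i\le n-2k-1\}$: the edge of index $i$ has length $\min(k+i,\,n-k-i)$, so for each $\ell$ with $k+1\le\ell<n/2$ the two indices $i=\ell-k$ and $i=n-k-\ell$ contribute exactly two edges of length $\ell$, while $\ell=n/2$ (if $n$ is even) comes from the single index $i=n/2-k$. Summing over the $k$ accordions yields exactly $2k$ edges of each length $k+1,\dots,\lfloor(n-1)/2\rfloor$ and $k$ of length $n/2$, which is $(v)$; and $(v)\Rightarrow(iv)$ is trivial since the $k$-ears are precisely the edges of length $k+1$.

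What remains, and what I expect to be the crux, is $(iii)\Rightarrow(i)$ (equivalently $(iv)\Rightarrow(vi)$): from the purely enumerative hypothesis that no $k$-star is internal one must actually \emph{produce} a $k$-colouring. I would phrase this as colouring the \emph{crossing graph} $\Gamma$ on the $k$-relevant edges of $T$, whose cliques have size at most $k$ and which is glued from one copy of $\overline{C_{2k+1}}$ minus at least one vertex per $k$-star, the pieces overlapping along the edges shared by two stars (Corollary~\ref{incidences}). Each such piece is $k$-colourable, and the task is to make the local colourings globally consistent. The natural route is to build the accordions directly as ``threads'': by Lemma~\ref{starwithboundary} the $k$-boundary edges and positive ears of each external $k$-star alternate along a path, and one traces an accordion by entering a star through one boundary/ear edge and leaving through the neighbouring edge, passing from star to star across shared $k$-relevant edges. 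The delicate point—the main obstacle—is to show that these threads close up into exactly $k$ pairwise edge-disjoint accordions covering every $k$-relevant edge, ruling out premature cycles or collisions; this is precisely where the hypothesis that every $k$-star is external (so that each star supplies a boundary/ear anchor) is used essentially.
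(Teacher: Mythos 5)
Your treatment of the easy arrows is correct and essentially matches the paper: $(i)\Rightarrow(ii)$ via the fact that for $k>1$ any three edges of a $k$-star contain a crossing pair (your observation that the non-crossing graph of a $k$-star is the $(2k+1)$-cycle is a nice way to justify the paper's one-line claim), $(ii)\Rightarrow(iii)$ by pigeonhole on the $2k+1$ relevant edges of an internal star, $(iii)\Leftrightarrow(iv)$ by Corollary~\ref{earsenumeration}, and $(vi)\Rightarrow(i),(v),(iv)$ by direct computation. One secondary caveat: your counting proof of $(i)\Rightarrow(vi)$ rests on the assertion that a pairwise non-crossing family of $k$-relevant edges has at most $n-2k-1$ members. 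This is true, but it is not an ``elementary bound'' you can invoke in passing: it is essentially the $p=k$ case of the lemma the paper only proves later (Section~\ref{sectionflatinflat}, via flattening), and the naive inductions fail because of nested configurations such as $[u,v]\subset[u-1,v+1]\subset\cdots$; a correct proof needs a laminar-family or flattening argument. (In your final architecture this arrow is redundant anyway, since $(i)\Rightarrow(ii)\Rightarrow(iii)\Rightarrow(vi)$ would recover it.)

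The genuine gap is the crux itself. Your cycle of implications closes only through $(iii)\Rightarrow(i)$, which you propose to prove by building the accordions as ``threads'' through external stars --- and then you explicitly stop, calling the closure, termination and disjointness of these threads ``the main obstacle.'' But that obstacle \emph{is} the theorem; everything else is bookkeeping. The paper's proof of $(iv)\Rightarrow(vi)$ supplies exactly the three things you leave open: (1) a precise pairing --- by Lemma~\ref{starwithboundary} the edges of an external $k$-star that are neither $k$-boundary edges nor positive ears form a path of \emph{even} length in the star-order edge cycle, and one pairs the first with the second, the third with the fourth, and so on (this is where ``every star is external'' is used: an internal star has an odd cycle of $2k+1$ relevant edges and admits no such pairing); (2) the check that each step of a thread is an accordion step and that a thread stops precisely when the new edge is a $k$-ear, so the $2k$ ears anchor $k$ threads, which by the edge count of Corollary~\ref{starsenumeration} must exhaust all $k(n-2k-1)$ relevant edges; and (3) disjointness, proved by propagation: if two threads share an edge $e_i=f_j$, then in either star containing it the pair of that edge is determined, so $e_{i+1}\in\{f_{j+1},f_{j-1}\}$, and iterating forces the two threads to coincide. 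Without carrying out these steps --- none of which is automatic, since a priori a thread could cycle or two threads could collide --- your proposal identifies the right strategy (indeed, the paper's own strategy) but does not constitute a proof.
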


Observe that for $k=1$, only (ii), (iii), (iv), (v) and (vi) are equivalent, while (i) holds for every triangulation.

\begin{proof}[Proof of Theorem~\ref{colorable}]
When $k>1$, any three edges of a $k$-star form at least one $2$-crossing. Thus, any $k$-coloring of $T$ without monochromatic $2$-crossing is such that no $k$-star of $T$ contains three edges of the same color, and (i) implies (ii).

Let $S$ be a $k$-star of a $k$-triangulation whose $k$-relevant edges are colored with $k$ colors. If all edges of $S$ are $k$-relevant, then, by the pigeon-hole principle, there is a color that colors three edges of $S$. Thus (ii) implies (iii).
Corollary~\ref{earsenumeration} ensures that (iii) and (iv) are equivalent.

Since (vi)$\Rightarrow$(v)$\Rightarrow$(iv) and (vi)$\Rightarrow$(i) are trivial, it only remains to prove that (iv) implies (vi). 

For this, we give an algorithm that finds the $k$ disjoint $k$-accordions in a $k$-triangulation $T$ with $2k$ $k$-ears.  Recall that if $S$ is an external $k$-star with $b$ $k$-boundary edges, then $S$ has $b-1$ positive ears, and that $k$-boundary edges and positive ears of $S$ form an alternating path in the $k$-star. Thus, the edges of $S$ which are neither $k$-boundary nor positive ears of $S$ form a path of even length. This defines naturally a ``pairing" of them: we say that the first and the second (resp. the third and the fourth, {\it etc.}) edges of this path form a \emph{pair} of edges in $S$.

Consider now a $k$-ear $e_1$ of $T$. Let $S_0$ be the $k$-star of $T$ for which $e_1$ is a positive ear. Let $S_1$ be the other $k$-star of $T$ containing $e_1$. Let $e_2$ be the pair of $e_1$ in $S_1$. Let $S_2$ be the other $k$-star of $T$ containing $e_2$ and let $e_3$ be the pair of $e_2$ in $S_2$. Let continue so untill we reach a $k$-ear. It is obvious that we get a $k$-accordion of $E_n$. To get another $k$-accordion, we do the same with a $k$-ear which is neither $e_1$ nor $e_{n-2k-1}$. 

To prove the correctness of this algorithm, we only have to prove that the $k$-accordions we construct are disjoint. Suppose for example that two of them $\{e_1,\ldots,e_{n-2k-1}\}$ and $\{f_1,\ldots,f_{n-2k-1}\}$ intersect. Let $i,j$ be such that $e_i=f_j$. Let $S$ be a $k$-star containing $e_i$. Then by construction, either $e_{i+1}$ or $e_{i-1}$ (resp. either $f_{j+1}$ or $f_{j-1}$) is the pair of $e_i$ in $S$. Thus, either $e_{i+1}=f_{j+1}$, or $e_{i+1}=f_{j-1}$. By propagation, we get that $\{e_1,\ldots,e_{n-2k-1}\}=\{f_1,\ldots,f_{n-2k-1}\}$
\end{proof}

Observe that the above proof of (iv)$\Rightarrow$(vi) also gives uniqueness (up to permutation of colors) of the $k$-coloring (decomposition into accordions)
of a $k$-colorable $k$-triangulaiton $T$. Indeed, any $k$-coloring has to respect the pairing of edges in all $k$-stars of $T$.

Let us also make the remark that part (v) implies that every $k$-colorable $k$-triangulation contains exactly  $k(n-2p-1)$ $p$-relevant edges, for any $k\le p\le \lfloor\frac{n-1}{2}\rfloor$. It is proved by a flip method in~\cite{n-gdfcp-00} that any $k$-triangulation of the $n$-gon contains at most this same number of $p$-relevant edges. We will prove this result in the next section, as an application of the flattenings of $k$-stars in $k$-triangulations.

Let us conclude this section with the following remark. An easy ``intuitive model" for what a $k$-triangulation is is just a superposition of $k$ triangulations. Even if this model is sometimes usefull, the results in this section say that it is also misleading:
\begin{itemize}
\item Theorem~\ref{colorable} says that the structure of $k$-triangulations obtained in this way is really particular: the number of edges of each length is $2k$, all $k$-stars are external, {\it etc.}
\item The number of $k$-accordions of $E_n$ containing a given $k$-ear of $E_n$ is $2^{n-2k-2}$. In particular, the number of $k$-colorable $k$-triangulations of $E_n$ is at most ${n \choose k}2^{k(n-2k-2)}\le 2^{(k+1)n}$. This is much smaller than the total number of $k$-triangulations, which for constant $k$ equals $4^{nk}$ modulo a polynomial factor in $n$ (see Theorem~\ref{triangenumeration}).

\item Let $T$ be any triangulation with only two ears, and let $Z$ denote the set of its $2$-relevant edges. Clearly, $Z$ is a $2$-accordion, and it is easy to see that there exists a $2$-accordion $Z'$ disjoint from it, so that $Z'$ complete $Z$ to give the set of $2$-relevant edges of a $2$-colorable $2$-triangulation. Surprisingly, this simple property fails when $k\ge3$: even if a $k$-triangulation $T$ is $k$-colorable, it is not always possible to find a $(k+1)$-accordion disjoint from $T$ (see Fig.~\ref{ctrexm}).

\begin{figure}
\centerline{\includegraphics[scale=1]{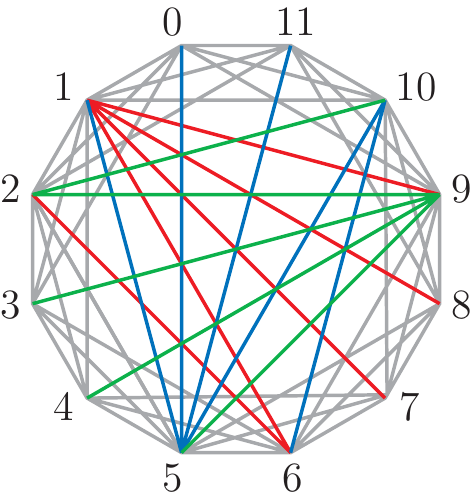}}
\caption{\small{A $3$-colorable $3$-triangulation of the $12$-gon such that there is no $4$-accordion disjoint from it.}}\label{ctrexm}
\end{figure}
\end{itemize}


\section{Flattening a $k$-star, inflating a $k$-crossing}\label{sectionflatinflat}

The goal of this section is to describe in terms of $k$-stars an operation that connects $k$-triangulations of $n$ and of $n+1$ vertices. This operation, already present in~\cite{j-gt-03}, in~\cite{n-gdfcp-00}, and when $k=2$ in~\cite{e-btdp-06}, is useful for recursive arguments and it was a step in all previous proofs of the flipability of $k$-relevant edges (Corollary~\ref{graphflips}).

\begin{definition}
Let $T$ be a $k$-triangulation of the $(n+1)$-gon, and $e=[s_0,s_0+k]$ be a $k$-boundary edge of $T$.
Let $s_0,s_1=s_0+1,\ldots,s_k=s_0+k,s_{k+1},\ldots,s_{2k}$ be the vertices of the unique $k$-star $S$ of $T$ containing $e$, in their circle order.

We call \emph{flattening} of $e$ in $T$ the set of edges $\underline{T}_e$ whose underlying set of points is the set $V_{n+1}\smallsetminus\{s_0\}$ and which is contructed from $T$ as follows (see Fig.~\ref{flatinfl}):
\begin{enumerate}[(i)]
\item for any edge of $T$ whose vertices are not in $\{s_0,\ldots,s_k\}$ just copy the edge;
\item forget all the edges $[s_0,s_i]$, for $1\le i\le k$;
\item replace any edge of the form $[s_i,t]$ with $0\le i\le k$, and $s_k\cl t \cle s_{k+i}$ (resp. $s_{k+i+1}\cle t\cl s_0$) by the edge $[s_i,t]$ (resp. $[s_{i+1},t]$).
\end{enumerate}
\end{definition}

\begin{figure}
\centerline{\includegraphics[scale=1]{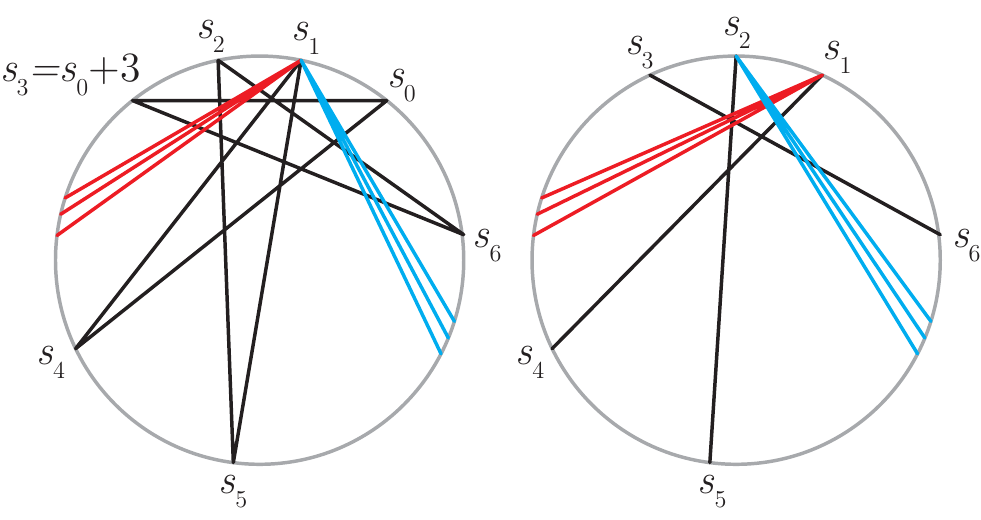}}
\caption{\small{Flattening a $3$-boundary edge - inflating a $3$-crossing.}}\label{flatinfl}
\end{figure}

\begin{remark}
\rm
We sometimes call this operation ``flattening of the $k$-star $S$". This is a somehow more graphical description of the operation, but it is also a slight abuse of language. Indeed, if $S$ has more than one $k$-boundary edge, the result of the flattening of $S$ depends on which $k$-boundary edge we flatten.
\end{remark}

We want to prove that $\underline{T}_e$ is a $k$-triangulation of the $n$-gon. Observe first that
\begin{enumerate}
\item If $f$ is a $k$-relevant edge of $\underline{T}_e$, then
\begin{enumerate}[(i)]
\item either $f$ is of the form $[s_i,s_{k+i}]$, for some $1\le i\le k$, and then it arises as the glueing of two edges $f'=[s_{i-1},s_{k+i}]$ and $f''=[s_i,s_{k+i}]$ of the initial triangulation $T$;
\item or $f$ is not of the previous form, and then it arises from a unique $k$-relevant edge $f'$ of $T$.
\end{enumerate}

\item If $f$ and $g$ are two $k$-relevant edges of $\underline{T}_e$ arising from $f'$ and $g'$ respectively, then
\begin{enumerate}[(i)]
\item either $f$ and $g$ do not cross, and then $f'$ and $g'$ do not cross; 
\item or $f$ and $g$ do cross, and then $f'$ and $g'$ do cross, unless the following happens: there exists $i$ in $\{1,\ldots,k\}$, and $u,v$ two vertices such that $s_k\cl u\cle s_{i+k}\cl s_{i+k+1}\cle v\cl s_0$ and $f=[s_i,u]$, $g=[s_{i+1},v]$, $f'=[s_i,u]$ and $g'=[s_i,v]$. Such a configuration is said to be a \emph{hidden configuration} (see Fig.~\ref{flatinfl}).
\end{enumerate}
\end{enumerate}

It is easy to derive from this that $|\underline{T}_e|=|T|-2k=k(2n-2k-1)$ and that any subset of $E_n$ that properly contains $\underline{T}_e$ contains a $(k+1)$-crossing. However, this is not sufficient to conclude that $\underline{T}_e$ is a $k$-triangulation (see Fig.~\ref{ctrexm2} for a counter-example). Thus we have to prove that $\underline{T}_e$ is $(k+1)$-crossing-free. Note that this provides a third proof (which this time is recursive on $n$) of the number of edges of a $k$-triangulation of the $n$-gon.

\begin{figure}
\centerline{\includegraphics[scale=1]{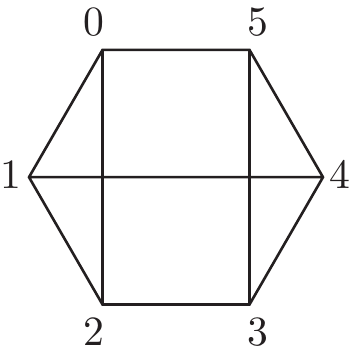}}
\caption{\small{A subset of $9$ edges of $E_6$ such that any proper superset contains a $2$-crossing but which is not a triangulation.}}\label{ctrexm2}
\end{figure}

\begin{lemma}
The set $\underline{T}_e$ is $(k+1)$-crossing-free. Hence, it is a $k$-triangulation of the $n$-gon.
\end{lemma}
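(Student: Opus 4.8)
The plan is to prove that $\underline{T}_e$ is $(k+1)$-crossing-free by contradiction: assume some $(k+1)$-crossing $G = \{g_1, \dots, g_{k+1}\}$ exists in $\underline{T}_e$, pull each edge $g_m$ back to its originating edge $g'_m$ in $T$ via the correspondence (1)(i)--(ii) described before the lemma, and derive a $(k+1)$-crossing in $T$, contradicting the hypothesis that $T$ is a $k$-triangulation. The preceding discussion has already done most of the combinatorial bookkeeping: item (2) tells us exactly when crossing is \emph{preserved} under the flattening and isolates the single exceptional ``hidden configuration'' in which two crossing edges $f,g$ of $\underline{T}_e$ come from \emph{non}-crossing edges $f',g'$ of $T$. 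So the heart of the argument is controlling how many pairs of a putative $(k+1)$-crossing can simultaneously be hidden configurations.

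First I would record the key structural feature of a hidden configuration: it involves an index $i \in \{1,\dots,k\}$ and two edges $f=[s_i,u]$, $g=[s_{i+1},v]$ of $\underline{T}_e$ with $s_k \cl u \cle s_{i+k} \cl s_{i+k+1} \cle v \cl s_0$, both emanating from the ``fan'' of consecutive star vertices $s_1,\dots,s_{k+1}$ near the flattened point. Crucially, both $f'=[s_i,v]$ (sic; the originating edge $g'=[s_i,v]$) and $g'$ share the \emph{same} endpoint $s_i$ in $T$, so they do not cross there. The plan is to argue that in any set of $k+1$ mutually crossing edges of $\underline{T}_e$, at most one pair can fail to lift to a crossing pair in $T$, and that even this single failure can be repaired.

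Here is the mechanism I would exploit. The only edges whose lift can destroy a crossing are those of the form $[s_i,\cdot]$ with $1 \le i \le k$, i.e.\ edges incident to the fan vertices $s_1,\dots,s_k$. In a $(k+1)$-crossing, all $k+1$ edges are pairwise crossing, hence pairwise distinct as chords, and the positions forced by the hidden-configuration inequalities are very rigid: the left endpoints $s_i, s_{i+1}$ are consecutive fan vertices and the right endpoints straddle the gap between $s_{i+k}$ and $s_{i+k+1}$. I would show that these constraints cannot be met by three edges of a common crossing simultaneously, so that a hidden configuration is confined to a single adjacent pair $(f,g)=([s_i,u],[s_{i+1},v])$ within $G$. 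For that offending pair, I replace $g$ by its lift $g'=[s_i,v]$ and observe, using the interval inequalities, that $\{g'_1,\dots,g'_{k+1}\}$ (lifting the rest faithfully) is \emph{still} a $(k+1)$-crossing in $T$: the edge $[s_i,v]$ together with $[s_i,u]$ and the remaining $k-1$ edges mutually cross because $u$ and $v$ lie on the far side of the fan, so $[s_i,v]$ crosses every chord that $g$ crossed except possibly $f$, and that lost crossing is compensated by the boundary edge $[s_0,s_k]$ or one of the fan edges $[s_0,s_i]$ available in $T$. This yields the desired $(k+1)$-crossing in $T$, a contradiction.

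The main obstacle, and the step I would spend the most care on, is precisely the repair argument: verifying that after substituting the lift $g'=[s_i,v]$ one genuinely recovers $k+1$ mutually crossing edges in $T$, rather than merely $k$. The subtlety is that collapsing the hidden configuration loses the crossing between $f$ and $g$, so I must exhibit an additional edge of $T$ (a fan edge $[s_0,s_j]$ or the boundary edge $e=[s_0,s_0+k]$, which \emph{are} present in $T$ but were forgotten in step (ii) of the flattening) that crosses all the others and restores the count. Concretely, I expect the edge $[s_0,s_{i+k}]$ or $[s_0,u']$-type chord to play this role, and I would use the inequalities $s_k \cl u \cle s_{i+k}$ and $s_{i+k+1}\cle v \cl s_0$ to check it crosses each $g'_m$. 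Once the single-hidden-configuration confinement is established, the rest is the faithful-lifting observation from item (2)(i)--(ii), and the cardinality count $|\underline{T}_e| = |T|-2k = k(2n-2k-1)$ together with Corollary~\ref{starsenumeration}(2) immediately upgrades ``$(k+1)$-crossing-free'' to ``$k$-triangulation of the $n$-gon.''
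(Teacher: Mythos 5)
Your overall strategy (lift a putative $(k+1)$-crossing of $\underline{T}_e$ back to $T$ and control the hidden configurations) starts like the paper's, but both claims that carry your argument have genuine gaps. First, the confinement claim: you argue that at most one adjacent pair of the crossing can be hidden because three edges cannot simultaneously satisfy the constraints. Ruling out three mutually involved edges only excludes two \emph{overlapping} hidden pairs; it says nothing about two \emph{disjoint} hidden pairs, which use four distinct edges. For $k\ge 4$ the inequalities are perfectly compatible with two disjoint hidden pairs: take $f_0=[s_1,u]$, $f_1=[s_2,v]$, $f_2=[s_3,u']$, $f_3=[s_4,v']$ with $s_k\cl u\cle s_{k+1}\cl s_{k+2}\cle v\cl u'\cle s_{k+3}\cl s_{k+4}\cle v'\cl s_0$; this is realizable as a set of chords because arbitrarily many polygon vertices may lie between the consecutive star vertices $s_{k+2}$ and $s_{k+3}$, so $v\cl u'$ is possible, and then these four edges are pairwise crossing while both $(f_0,f_1)$ and $(f_2,f_3)$ are hidden. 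Showing that such a configuration cannot occur inside the flattening of an actual $k$-triangulation is essentially the lemma itself, so you cannot assume it. This is exactly why the paper does not bound the number of hidden configurations a priori, but instead runs a descent: among all $(k+1)$-crossings of $\underline{T}_e$ it picks one minimizing the number of hidden configurations and produces another one with strictly fewer.

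Second, the repair step fails as stated. Since the two lifts $f'=[s_i,u]$ and $g'=[s_i,v]$ of a hidden pair share the vertex $s_i$, no set containing both can ever be a crossing; you must \emph{replace} one of them by an edge of $T$ crossing all the remaining lifts, not ``compensate'' by adding an edge. Your candidates do not work: an edge $[s_0,s_j]$ with $j\le k$ has both endpoints in the block of consecutive vertices $\llbracket s_0,s_k\rrbracket$, hence cannot cross any lift having no endpoint strictly inside $\rrbracket s_0,s_j\llbracket$ --- and already for $k=2$ the third edge of the crossing can be $[x_0,y_0]$ with $x_0\cl s_0$ and $s_2\cl y_0\cl u$, which has no such endpoint. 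Moreover $[s_0,s_{i+k}]$ need not even be an edge of $T$ for $2\le i\le k$ (among edges through $s_0$ only $[s_0,s_1],\ldots,[s_0,s_k]$ and $[s_0,s_{k+1}]$ are guaranteed), and when it is, it fails to cross $[s_i,u]$ whenever $u\cl s_{i+k}$. The natural replacement is the star edge $[s_{i+1},s_{i+k+1}]\in S\subseteq T$, but even this single substitution fails in general: if the crossing also contains $[s_{i+2},w]$ with $s_{i+k+2}\cle w\cl s_0$, its lift is $[s_{i+1},w]$, which shares the vertex $s_{i+1}$ with the replacement edge. Repairs therefore cascade along the fan, which is precisely why the paper's proof replaces a whole contiguous block of edges (between its indices $p$ and $q$) by the fan edges $[s_j,s_{j+k}]$ of $\underline{T}_e$, whose lifts are star edges of $T$, and measures progress by the number of hidden configurations. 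Your closing step (cardinality plus Corollary~\ref{starsenumeration} upgrades ``$(k+1)$-crossing-free'' to ``$k$-triangulation'') is fine, but the core of the proof is missing.
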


\begin{proof}
Suppose that $\underline{T}_e$ contains a $(k+1)$-crossing $F$. Let denote $f_0=[x_0,y_0],\ldots,f_k=[x_k,y_k]$ the edges of $F$ ordered such that $x_0\cl x_1\cl\ldots\cl x_k\cl y_0\cl y_1\cl \ldots\cl y_k$. Let $f'_0=[x'_0,y'_0],\ldots,f'_k=[x'_k,y'_k]$ be $k+1$ edges of $T$ that give (when we flatten $e$) the edges $f_0,\ldots,f_k$ respectively.

It is clear that if there exists no $0\le i\le k$ such that the four edges $(f_i,f_{i+1},f'_i,f'_{i+1})$ form a hidden configuration, then the edges $f'_0,\ldots,f'_k$ form a $(k+1)$-crossing of $T$, which is impossible. Thus we can suppose that the number of hidden configurations in the set $\{(f_i,f_{i+1},f'_i,f'_{i+1})\;|\; 0\le i\le k\}$ is at least $1$. We can also assume that this number is minimal, that is that we can not find a $(k+1)$-crossing $G$ of $\underline{T}_e$ arising from a set $G'$ of edges of $T$ such that the number of hidden configurations in the set $\{(g_i,g_{i+1},g'_i,g'_{i+1})\;|\; 0\le i\le k\}$ is strictly less than in $\{(f_i,f_{i+1},f'_i,f'_{i+1})\;|\; 0\le i\le k\}$. Here, we raise an absurdity by finding such sets $G$ and $G'$.

Let $i$ in $\{0,\ldots,k\}$ be such that $(f_i,f_{i+1},f'_i,f'_{i+1})$ is a hidden configuration. We can suppose that $x_i=s_i$ and $x_{i+1}=s_{i+1}$ (if this is not the case, we renumber the edges of $F$ such that this be true). Thus we know that $y_i\cle s_{i+k}\cl s_{i+k+1}\cle y_{i+1}$. Let $p$ denote the first integer before $i$ such that $x_p \ne s_p$ or $s_{p+k}\cl y_p$, and $q$ denote the first integer after $i+1$ such that $x_q \ne s_q$ or $y_q\cl s_{q+k}$.

Let $G$ be the set of $k+1$ edges of $\underline{T}_e$ deduced from $F$ as follows:
\begin{itemize}
\item for all $p<i<q$, let $g_i$ be $[s_i,s_{i+k}]$,
\item for all $q\le i\le p$, let $g_i$ be $f_i$.
\end{itemize}
Let $G'$ be the set of $k+1$ edges of $T$ constructed as follows:
\begin{itemize}
\item for all $p<i<q$, let $g'_i$ be $[s_i,s_{i+k}]$,
\item for all $q\le i\le p$, let $g'_i$ be $f'_i$.
\end{itemize}
		
It is quite clear that $G$ is a $(k+1)$-crossing of $\underline{T}_e$ arising from $G'$. We just have to verify that the number of hidden configurations in the set $\{(g_i,g_{i+1},g'_i,g'_{i+1})\;|\; 0\le i\le k\}$ is less than in $\{(f_i,f_{i+1},f'_i,f'_{i+1})\;|\; 0\le i\le k\}$. But
\begin{enumerate}[(i)]
\item the number of hidden configurations in $\{(g_i,g_{i+1},g'_i,g'_{i+1})\;|\; q\le i<p\}$ is exactly the same as in $\{(f_i,f_{i+1},f'_i,f'_{i+1})\;|\; q\le i<p\}$,
\item there is no hidden configuration in $\{(g_i,g_{i+1},g'_i,g'_{i+1})\;|\; p<i<q-1\}$, whereas there is one in $\{(f_i,f_{i+1},f'_i,f'_{i+1})\;|\; p<i<q-1\}$,
\item the edges $(g_p,g_{p+1},g'_p,g'_{p+1})$ (resp. $(g_{q-1},g_q,g'_{q-1},g'_q)$) do not form a hidden configuration.
\end{enumerate}
\end{proof}

\medskip
We now define the inverse transformation.

\begin{definition}
Let $T$ be a $k$-triangulation of the $n$-gon and $E$ be a $k$-crossing. Let $[s_1,s_{1+k}],\ldots,[s_k,s_{2k}]$ denote its edges, with $s_1\cl s_2 \cl \ldots \cl s_{2k}$. Assume further that $s_1,\ldots,s_k$ are consecutive in the cyclic order of the $n$-gon (that is $s_k=s_1+k-1$). We call $E$ an \emph{external} $k$-crossing.
Let $s_0$ be a new vertex on the circle, between $s_1-1$ and $s_1$.

We call \emph{inflating} of $E$ at $\llbracket s_1,s_k\rrbracket$ in $T$ the set of edges $\overline{T}^E$ whose underlying set of points is the set $V_n\cup\{s_0\}$ and which is contructed from $T$ as follows (see Fig.~\ref{flatinfl}):
\begin{enumerate}[(i)]
\item for any edge of $T$ whose vertices are not in $\{s_1,\ldots,s_k\}$ just copy the edge;
\item add all the edges $[s_0,s_i]$, for $1\le i\le k$;
\item replace any edge of the form $[s_i,t]$ with $1\le i\le k$, and $s_k\cl t \cle s_{k+i}$ (resp. $s_{k+i}\cle t\cl s_1$) by the edge $[s_i,t]$ (resp. $[s_{i-1},t]$).
\end{enumerate}
\end{definition}

\begin{remark}
\rm
We are abusing notation: if $E$ contains more than $k$ consecutive vertices, then the result of the inflating of $E$ depends on the $k$-consecutive points we choose. It would also be an abuse of language to say that we inflate the cyclic interval $\llbracket s_1,s_k\rrbracket$ since several $k$-crossings may be adjacent to these $k$ points. For example, when $k=1$, we have to specify both an edge and a vertex of this edge to define an inflating.
\end{remark}

Observe that:
\begin{enumerate}
\item Any $k$-relevant edge $f$ of $\overline{T}^E$ arises from a unique edge $f'$ of $T$.

\item If $f$ and $g$ are two $k$-relevant edges of $\overline{T}^E$ that cross, and $f'$ and $g'$ are the two edges of $T$ that give $f$ and $g$ respectively, then $f'$ and $g'$ cross as well.
\end{enumerate}

Point~(2) ensures that $\overline{T}^E$ is $(k+1)$-crossing-free. Moreover, it is easy to see that $|\overline{T}^E|=|T|+2k=k(2(n+1)-2k-1)$. Thus, by Corollary~\ref{starsenumeration}, we get:

\begin{lemma}
$\overline{T}^E$ is a $k$-triangulation of the $(n+1)$-gon.
\end{lemma}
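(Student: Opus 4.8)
The plan is to verify the final lemma by checking the two properties asserted just before its statement, namely that $\overline{T}^E$ is $(k+1)$-crossing-free and that it has the right cardinality, and then invoke Corollary~\ref{starsenumeration}. The cardinality claim $|\overline{T}^E|=|T|+2k$ is the easy bookkeeping part: the inflating adds the $k$ new edges $[s_0,s_i]$ for $1\le i\le k$, and rewiring the old edges $[s_i,t]$ to either $[s_i,t]$ or $[s_{i-1},t]$ is a bijection on the surviving edges, so no edge is created or destroyed by step (iii). Checking $|T|+2k=k(2(n+1)-2k-1)$ is immediate from $|T|=k(2n-2k-1)$, which is part~(1) of Corollary~\ref{starsenumeration}. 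Since $\overline{T}^E$ lives on the $(n+1)$-gon and has exactly $k(2(n+1)-2k-1)$ edges, part~(2) of Corollary~\ref{starsenumeration} will finish the argument the moment we know $\overline{T}^E$ is $(k+1)$-crossing-free.

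Thus the real content is the crossing-freeness, and I would establish it through the two numbered observations stated right before the lemma, whose proofs I would supply. First I would prove observation~(1): every $k$-relevant edge $f$ of $\overline{T}^E$ arises from a unique edge $f'$ of $T$. The point is that the inflating, unlike the flattening, never \emph{glues} two edges of $T$ into one edge of $\overline{T}^E$ — it only relabels one endpoint of an edge $[s_i,t]$, or leaves an edge untouched, or adds a brand-new edge incident to $s_0$. The edges incident to $s_0$ are $k$-boundary or shorter (they connect $s_0$ to the nearby $s_1,\dots,s_k$), hence not $k$-relevant, so a $k$-relevant edge of $\overline{T}^E$ is always the image of a single well-defined $k$-relevant edge $f'$ of $T$; I would make the inverse map $f\mapsto f'$ explicit by undoing the substitution in step~(iii).

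The crux is observation~(2): if two $k$-relevant edges $f,g$ of $\overline{T}^E$ cross, then their preimages $f',g'$ in $T$ cross as well. I expect this to be the main obstacle, since it is exactly the place where the flattening lemma needed the delicate ``hidden configuration'' analysis — and the whole reason the inflating direction is stated as easy is that this asymmetry is supposed to vanish here. The key geometric fact I would isolate is that the relabeling in step~(iii) only ever moves an endpoint $s_i$ to an \emph{adjacent} vertex $s_{i-1}$, and it does so in a monotone, order-preserving fashion: the position of the moved endpoint relative to the cyclic interval $\llbracket s_1,s_k\rrbracket$ and relative to the far endpoint $t$ is preserved, so the chord $f'$ separates the circle into the same two arcs (up to the insertion of $s_0$) as $f$ does on the relevant vertices. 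I would therefore argue that for each of the finitely many cases — $f,g$ both untouched; one touched and one untouched; both touched — the cyclic order of the four preimage endpoints coincides with the interleaving pattern that witnesses the crossing of $f$ and $g$, so a crossing downstairs forces a crossing upstairs. Once observation~(2) holds, any $(k+1)$-crossing in $\overline{T}^E$ would pull back to $k+1$ pairwise-crossing edges of $T$ (the edges incident to $s_0$ cannot participate, being too short), contradicting that $T$ is $(k+1)$-crossing-free; hence $\overline{T}^E$ is $(k+1)$-crossing-free, completing the proof.
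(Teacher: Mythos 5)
Your overall plan coincides with the paper's: establish the two observations preceding the lemma, pull a hypothetical $(k+1)$-crossing of $\overline{T}^E$ back to one of $T$, compute $|\overline{T}^E|$, and conclude via part (2) of Corollary~\ref{starsenumeration}. However, your proof rests on a misreading of the inflating operation, and the error breaks the cardinality half of your argument. In step (iii) of the definition, the two intervals $s_k\cl t\cle s_{k+i}$ and $s_{k+i}\cle t\cl s_1$ are \emph{not} disjoint: they overlap exactly at $t=s_{k+i}$, i.e.\ exactly at the $k$ edges $[s_i,s_{k+i}]$ of the $k$-crossing $E$ being inflated. Each such edge is replaced by \emph{both} $[s_i,s_{k+i}]$ and $[s_{i-1},s_{k+i}]$; this splitting is the inverse of the gluing performed by the flattening, and it is precisely what turns $E$ together with the new vertex $s_0$ into a $k$-star with $2k+1$ edges (namely $[s_0,s_k]$ from step (ii) plus the $2k$ edges just described). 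So step (iii) is not a bijection, contrary to your claim that ``no edge is created or destroyed'' by it: it creates $k$ extra edges. Your own bookkeeping therefore yields $|T|+k$ edges, contradicting the count $|T|+2k$ that you assert; and a $(k+1)$-crossing-free set with only $|T|+k$ edges is not even maximal, so part (2) of Corollary~\ref{starsenumeration} cannot be invoked and your concluding step fails.

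The same misreading infects your treatment of observation (2). It is false that every edge of $\overline{T}^E$ incident to $s_0$ has length at most $k$: the edge $[s_0,s_{k+1}]$, which arises as the second copy of the edge $[s_1,s_{k+1}]$ of $E$, has length exactly $k+1$, is $k$-relevant, and can participate in a $(k+1)$-crossing of $\overline{T}^E$; your case analysis, which excludes all edges at $s_0$ as ``too short'', would miss it. (Note also that its preimage $[s_1,s_{k+1}]$ is only a $k$-boundary edge of $T$, so the preimage of a $k$-relevant edge need not be $k$-relevant; this is harmless for the final contradiction, but your formulation of observation (1) as a bijection between $k$-relevant edges must be weakened to the well-definedness of the pullback map.) Finally, since that pullback map is two-to-one on the pairs created from $E$, pulling a $(k+1)$-crossing back to $k+1$ \emph{distinct} mutually crossing edges of $T$ requires one further remark that your reading never forces you to make: the two edges $[s_i,s_{k+i}]$ and $[s_{i-1},s_{k+i}]$ arising from a common edge of $E$ share the vertex $s_{k+i}$, hence never both belong to a crossing. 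With these corrections the skeleton of your argument is the paper's and can be completed, but as written the computations rest on an incorrect description of the operation.
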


\begin{theorem}
Flattening and inflating are inverse operations. More precisely:
\begin{enumerate}[(i)]
\item if $e$ is a $k$-boundary edge of a $k$-triangulation $T$, and $E$ denotes the $k$-crossing of $\underline{T}_e$ consisting of edges that arise as glueing of two edges of $T$, then $\overline{\underline{T}_e}^E=T$;
\item if $E$ is a $k$-crossing with $k$ consecutive vertices $s_1,\ldots,s_k$, and $e$ denotes the edge $[s_0,s_k]$ of $\overline{T}^E$, then $\underline{\overline{T}^E}_e=T$.
\end{enumerate}
\end{theorem}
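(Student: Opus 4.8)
The plan is to verify the two claimed identities directly at the level of the defining rules of flattening and inflating, tracking what happens to each vertex and each edge. Both operations are defined by explicit casework (rules (i)--(iii)), so the cleanest strategy is to show that performing one operation and then its inverse returns every edge to its original form, by matching the cases of the two definitions against one another. Since both resulting objects are already known to be $k$-triangulations (by the previous lemmas) and a $k$-triangulation is determined by its edge set, it suffices to check that the edge sets coincide.

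For part (i), I would start from a $k$-boundary edge $e=[s_0,s_0+k]$ of $T$, let $S$ be the unique $k$-star containing $e$ with circle-order vertices $s_0,\dots,s_{2k}$, and form $\underline{T}_e$. The key observation is that in $\underline{T}_e$ the edges $[s_i,s_{i+k}]$ for $1\le i\le k$ are exactly the ones that arose by gluing $f'=[s_{i-1},s_{i+k}]$ and $f''=[s_i,s_{i+k}]$, and these constitute a $k$-crossing with the $k$ consecutive vertices $s_1,\dots,s_k$; this is precisely the external $k$-crossing $E$ to be inflated. I would then inflate $E$ at $\llbracket s_1,s_k\rrbracket$, reintroducing a vertex in the slot vacated by $s_0$, and check rule-by-rule that the inflating rules undo the flattening rules: rule (ii) of inflating re-adds the edges $[s_0,s_i]$ that rule (ii) of flattening deleted; the gluing/splitting in rule (iii) of inflating reverses the merging in rule (iii) of flattening (the shift $[s_i,t]\mapsto[s_{i-1},t]$ for $t$ on one side inverts the shift $[s_i,t]\mapsto[s_{i+1},t]$, and the interval boundaries $s_{k+i}$ match up correctly); and rule (i) leaves untouched edges fixed in both directions. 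The main thing to be careful about is the boundary index behavior where the two ranges in rule (iii) meet, and the fact that the glued edge $[s_i,s_{i+k}]$ splits back into the correct pair $[s_{i-1},s_{i+k}]$ and $[s_i,s_{i+k}]$.

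For part (ii), the argument is symmetric but slightly easier because inflating introduces no gluing: every $k$-relevant edge of $\overline{T}^E$ comes from a unique edge of $T$. Starting from an external $k$-crossing $E$ with consecutive vertices $s_1,\dots,s_k$, I inflate to get $\overline{T}^E$ on $V_n\cup\{s_0\}$, identify the $k$-boundary edge $e=[s_0,s_k]$, and flatten it. Here I must confirm that $e$ really is a $k$-boundary edge of $\overline{T}^E$ belonging to the star whose circle-order vertices are $s_0,\dots,s_{2k}$ (so that the flattening is set up with the same labeled star), and then again match rules (i)--(iii) pairwise to see that flattening deletes the edges $[s_0,s_i]$ added by inflating and reverses the shifts.

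I expect the main obstacle to be purely bookkeeping: verifying that the index ranges and the interval endpoints $s_{k+i}$, $s_{k+i+1}$ in rule (iii) of each definition line up exactly under the reindexing, and confirming that the single off-by-one discrepancy in the two versions of rule (iii) (flattening replaces $[s_i,t]$ by $[s_{i+1},t]$ on the $\rrbracket s_{k+i+1},s_0\llbracket$ side, whereas inflating replaces $[s_i,t]$ by $[s_{i-1},t]$ on the $\llbracket s_{k+i},s_1\llbracket$ side) is genuinely an inverse rather than introducing a shift. There is no deep idea here beyond the observation that the external $k$-crossing selected in part (i) is exactly the one produced by the gluing, so I would present the proof as a careful case analysis, perhaps displaying the correspondence of edges in a short enumerated list rather than grinding through every subcase in prose.
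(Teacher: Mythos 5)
Your proposal is correct, and in fact the paper gives no proof of this theorem at all — it states it as an immediate consequence of the two definitions, so your rule-by-rule matching of the flattening and inflating rules is precisely the bookkeeping the paper leaves implicit. You also flag the right subtleties: the gluing/splitting correspondence for the edges $[s_i,s_{i+k}]$, the fact that the two off-by-one shifts in the respective rules (iii) are mutually inverse, and the identification in part (ii) of the flattened $k$-star with the one created by the inflation (which holds because a $k$-boundary edge lies in a unique $k$-star, by Corollary~\ref{incidences}).
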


\begin{example}
\rm
Figure~\ref{fig:2triang8pointsinflat} shows the inflating of $2$-crossing $\{[1,4],[2,7]\}$ at $\llbracket 1,2\rrbracket$ in the $2$-triangulation of Figure \ref{fig:2triang8points}. The new $2$-star is colored in red.
\end{example}

\begin{figure}
\centerline{\includegraphics[scale=1]{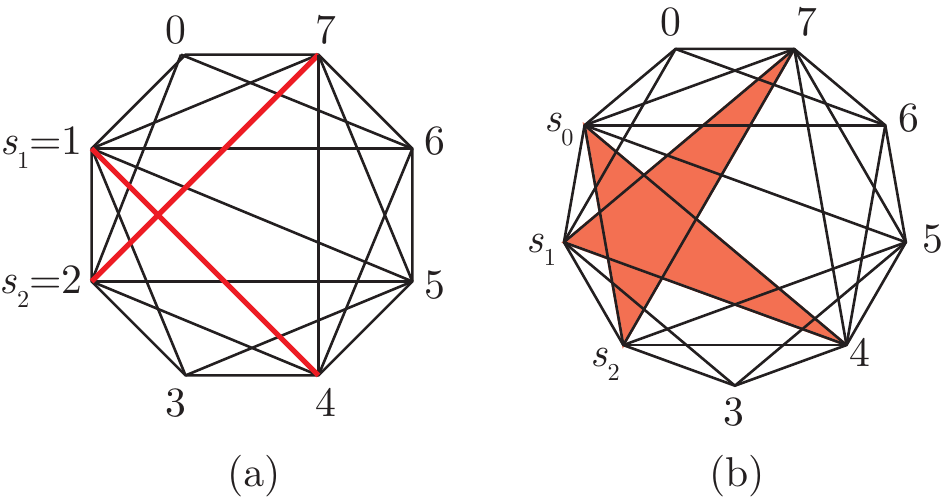}}
\caption{\small{The inflating of the $2$-crossing $\{[1,4],[2,7]\}$ at $\llbracket 1,2\rrbracket$ in the $2$-triangulation of Figure \ref{fig:2triang8points}.}}\label{fig:2triang8pointsinflat}
\end{figure}

We can now use these inverse tranformations to prove additional results on $k$-triangulations by induction. 

\begin{corollary}\label{enumerationinduction}
Let $N(n,k)$ denote the number of $k$-triangulations of an $n$-gon. The quotient $N(n+1,k)/N(n,k)$ equals the average number of $k$-crossings on the first $k$ points among all $k$-triangulations of the $n$-gon.
\end{corollary}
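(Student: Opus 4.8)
The plan is to set up a double-counting argument using the flattening and inflating operations, which have just been established as mutually inverse bijections. The key observation is that inflating takes a $k$-triangulation of the $n$-gon together with an \emph{external} $k$-crossing (one whose smaller endpoints $s_1,\dots,s_k$ are $k$ consecutive vertices) and produces a $k$-triangulation of the $(n+1)$-gon, while flattening takes a $k$-triangulation of the $(n+1)$-gon together with a $k$-boundary edge and returns a $k$-triangulation of the $n$-gon. So I would count the set $\mathcal{P}$ of pairs $(T,E)$ where $T$ is a $k$-triangulation of the $n$-gon and $E$ is an external $k$-crossing on a \emph{fixed} choice of first $k$ consecutive points, say $\llbracket 1,k\rrbracket$; the size of this set should turn out to be $N(n+1,k)$.

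First I would make precise what ``$k$-crossings on the first $k$ points'' means: an external $k$-crossing $E=\{[s_i,s_{i+k}] \mid 1\le i\le k\}$ whose vertices $s_1,\dots,s_k$ are the $k$ consecutive points $\llbracket 1,k \rrbracket$ (or, by the symmetry under rotation of the labelling, any fixed block of $k$ consecutive vertices). Summing over all $k$-triangulations $T$ of the $n$-gon, the quantity $\sum_T (\text{number of such } E)$ is, by definition, $N(n,k)$ times the average number of such $k$-crossings, which is exactly the right-hand side of the statement. So the goal reduces to showing
\[
\sum_{T} \#\{E : E \text{ external } k\text{-crossing on } \llbracket 1,k\rrbracket \text{ of } T\} \;=\; N(n+1,k).
\]

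The core step is to exhibit a bijection realising this equality. Given a pair $(T,E)$ as above, inflating $E$ at $\llbracket 1,k\rrbracket$ produces a $k$-triangulation $\overline{T}^E$ of the $(n+1)$-gon in which the newly created vertex $s_0$ sits between the old vertices $0$ and $1$, and in which $e=[s_0,s_k]$ is a $k$-boundary edge (indeed $s_0,\dots,s_k$ are the $k+1$ consecutive vertices $\llbracket s_0,s_k\rrbracket$). Conversely, given any $k$-triangulation $T'$ of the $(n+1)$-gon, I would single out the \emph{unique} $k$-boundary edge incident to the newly-inserted vertex on the appropriate side — concretely, the $k$-boundary edge $[s_0,s_0+k]$ whose shorter arc $\llbracket s_0,s_0+k\rrbracket$ contains the distinguished vertex — and flatten it, obtaining a pair $(\underline{T'}_e, E')$ where $E'$ is the glued $k$-crossing guaranteed by the preceding theorem on inverse operations. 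That theorem gives $\overline{\underline{T'}_e}^{E'}=T'$ and $\underline{\overline{T}^E}_e=T$ directly, so these two assignments are inverse to one another, establishing the bijection and hence the claimed identity.

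The main obstacle I anticipate is bookkeeping the identification of which vertex plays the role of $s_0$ and making sure the correspondence between ``external $k$-crossings on a fixed block of $k$ consecutive points of the $n$-gon'' and ``$k$-boundary edges at a fixed newly-inserted vertex of the $(n+1)$-gon'' is genuinely one-to-one and exhaustive — in particular that \emph{every} $k$-triangulation of the $(n+1)$-gon arises exactly once, with no overcounting coming from a $k$-star possessing several $k$-boundary edges. This is precisely the subtlety flagged in the remark that flattening a $k$-star depends on which $k$-boundary edge one flattens; the resolution is that here we do not flatten an arbitrary $k$-boundary edge of an arbitrary $k$-star, but the specific edge determined by the distinguished vertex $s_0$, so the choice is canonical once the distinguished vertex is fixed. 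Once this vertex-tracking is pinned down, the equality of cardinalities follows immediately from the inverse-operations theorem, and dividing both sides by $N(n,k)$ yields the stated formula for $N(n+1,k)/N(n,k)$.
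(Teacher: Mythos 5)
Your proposal is correct and is essentially the paper's own argument: the paper states this corollary as an immediate consequence of the preceding theorem that flattening and inflating are inverse operations, which is exactly the bijection between $k$-triangulations of the $(n+1)$-gon and pairs $(T,E)$ that you construct. Your careful tracking of the distinguished vertex $s_0$ and the canonical choice of $k$-boundary edge $[s_0,s_0+k]$ fills in precisely the bookkeeping the paper leaves implicit.
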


For example,
\begin{enumerate}[(i)]
\item For $k=1$, we get that $N(n+1,1)/N(n,1)$ equals the average degree of vertex 1 in triangulations of the $n$-gon. Since the average degree of all triangulations of the $n$-gon is the same and equal to $(4n-6)/n$ we recover the well-known recursion for Catalan numbers:
$$C_{n-1}=\frac{4n-6}{n}C_{n-2}.$$

\item For $n=2k+1$ we have that $N(2k+1,k)=1$ (the unique $k$-triangulation is the complete graph)
and the number of $k$-crossings using the first $k$ vertices in this $k$-triangulation is $k+1$ (any choice of $k$ of the last $k+1$ vertices gives one $k$-crossing). In particular, we recover the fact that $N(2k+2,k)=k+1$ (see Example~\ref{exm:2k+2}).

\item Unfortunately, for $n>2k+1$ it is not true that the number of $k$-crossings using $k$ consecutive vertices is independent of the $k$-triangulation. Otherwise we would have that $N(n+1,k)\cdot n /N(n,k)$ is an integer, equal to that number (as happens in the case of triangulations).
\end{enumerate}

\medskip
The following lemma is another example of the use of a recursive argument. It is equivalent to Theorem 10 in~\cite{n-gdfcp-00}, where the proof uses flips.

\begin{lemma}
Any $k$-triangulation of the $n$-gon contains at most $k(n-2p-1)$ $p$-relevant edges, for any $k\le p\le \frac{n-1}{2}$.
\end{lemma}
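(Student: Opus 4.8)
The plan is to prove the statement by induction on $n$, using the flattening/inflating operations just introduced. Write $P(U)$ for the number of $p$-relevant edges of a $k$-triangulation $U$. The base case is $n=2p+1$: every edge of the $(2p+1)$-gon has length at most $p$, so $P=0=k(n-2p-1)$. For the inductive step assume $n>2p+1$ and let $T$ be a $k$-triangulation of the $n$-gon. Flattening any $k$-boundary edge $e$ produces a $k$-triangulation $\underline{T}_e$ of the $(n-1)$-gon (by the previous lemma), and since flattening and inflating are mutually inverse we may write $T=\overline{(\underline{T}_e)}^E$ for the corresponding external $k$-crossing $E$. The induction hypothesis gives $P(\underline{T}_e)\le k\big((n-1)-2p-1\big)$, so it suffices to prove the key inequality $P(\overline{U}^E)\le P(U)+k$ for every inflation; adding $k$ then yields $P(T)\le k(n-2p-1)$, closing the induction. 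One checks along the way the bound $p\le (n-2)/2$ needed to invoke the hypothesis on the $(n-1)$-gon, which holds because $n\ge 2p+2$.

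To prove the key inequality I will exploit that inflation is ``clean'' in the sense recorded after its definition: every $k$-relevant edge $f$ of $\overline{U}^E$ arises from a \emph{unique} edge $f'$ of $U$, so $f\mapsto f'$ is an injection on the $p$-relevant edges of $\overline{U}^E$ (which are in particular $k$-relevant). Since inflating inserts a single vertex, each length grows by at most one; hence $\mathrm{len}(f')\ge \mathrm{len}(f)-1\ge p$, i.e. $f'$ is either $p$-relevant or a $p$-boundary edge of $U$. Splitting the image of the injection accordingly gives
\[
P(\overline{U}^E)\le P(U)+\#\{\,p\text{-boundary edges of }U\text{ that become }p\text{-relevant after inflation}\,\}.
\]
Everything thus reduces to bounding the number of such \emph{promoted} $p$-boundary edges by $k$.

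For this last step, observe that a length-$p$ edge of $U$ gains length exactly when the newly inserted vertex falls on its (unique, since here $p<\tfrac{n-1}{2}$) short arc, i.e. when its short arc straddles the single gap $g$ at which inflation inserts the new point. The length-$p$ edges whose short arc contains $g$ are the $p$ ``parallel translates'' $[a,a+p]$ with $a$ ranging over the $p$ positions for which $g$ lies strictly between $a$ and $a+p$; a direct interleaving check shows any two of them cross, so these $p$ edges are pairwise crossing. As $U$ is $(k+1)$-crossing-free, at most $k$ of them belong to $U$, and the promoted edges form a subset of these, so their number is at most $k$, as required. The main thing to be careful about is the monotonicity claim that one flattening (resp.\ inflation) changes each edge length by at most $1$: the reattachment rule moves an endpoint by one star-vertex \emph{and} deletes (resp.\ inserts) a vertex, and one must verify that these two effects never combine to shift a length by $2$. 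Tracing the cases of the flattening rules against the position of the other endpoint shows the net change always lies in $\{-1,0\}$. This is the only delicate point; the rest is straightforward bookkeeping.
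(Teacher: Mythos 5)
Your overall strategy---induction on $n$ via flattening and inflation, with base case $n=2p+1$ and the key inequality $P(\overline{U}^E)\le P(U)+k$---is the same as the paper's, but your proof of that key inequality has a fatal gap: the map $f\mapsto f'$ is \emph{not} an injection. The observation you quote says that every $k$-relevant edge of $\overline{U}^E$ arises from a \emph{unique} edge of $U$; that makes the map well defined, not injective. In fact inflation \emph{duplicates} each of the $k$ edges of the crossing $E$: the edge $[s_i,s_{k+i}]$ of $U$ gives rise to both $[s_i,s_{k+i}]$ and $[s_{i-1},s_{k+i}]$ in $\overline{U}^E$ (this is exactly why $|\overline{U}^E|=|U|+2k$, while rule (ii) only accounts for the $k$ irrelevant edges $[s_0,s_i]$), and both descendants can be $p$-relevant. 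Concretely, take $k=1$, $p=2$, and let $U$ be the hexagon on $\{2,3,\ldots,7\}$ with diagonals $[2,4]$, $[2,5]$, $[5,7]$; inflating $E=\{[2,5]\}$ at $s_1=2$ (inserting a new vertex $1$ between $7$ and $2$) produces the heptagon triangulation $T$ with diagonals $[2,4]$, $[2,5]$, $[1,5]$, $[5,7]$. Here $P(U)=1$ and no length-$2$ edge of $U$ gains length, yet $P(T)=2$: the two length-$3$ edges $[2,5]$ and $[1,5]$ of $T$ both arise from the single edge $[2,5]$ of $U$. So your inequality $P(\overline{U}^E)\le P(U)+\#\{\text{promoted }p\text{-boundary edges}\}$ is false as stated; the paper's set $F$ (edges of $E$ of length at least $p$) exists precisely to absorb this extra contribution.

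There is a second, independent error: your promotion criterion is wrong for edges attached to the shifted vertices $s_1,\ldots,s_k$. When an edge $[s_i,t]$ is reattached as $[s_{i-1},t]$, the arc that gains a vertex is the one \emph{away} from the insertion gap, so such an edge is promoted exactly when its short arc \emph{avoids} the gap---the opposite of what you claim. (With $k=1$ and $U$ the pentagon on $\{2,\ldots,6\}$ fanned at $2$, inflating $E=\{[2,3]\}$ at $s_1=2$: the edge $[2,4]$, whose short arc lies away from the gap, becomes $[1,4]$ and grows from length $2$ to $3$, while $[2,6]$, whose short arc contains the gap, becomes $[1,6]$ and keeps length $1$.) Hence ``promoted edges are among the translates straddling the gap'' fails, and with it your pairwise-crossing bound of $k$. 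Both defects stem from treating inflation as a pure vertex insertion and ignoring the reattachment and duplication of edges. The paper's proof repairs both at once: it introduces $F$ as above together with $G$ (the promoted non-crossing edges, correctly characterized as those arising from an edge $[v,v+\ell]$ whose short arc contains the whole interval $\llbracket s_0,s_k\rrbracket$), proves that \emph{all} edges of $F\cup G$ mutually cross, and concludes $|F|+|G|\le k$ from a single application of $(k+1)$-crossing-freeness. That combined crossing argument, not a bound on promoted edges alone, is what closes the induction.
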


\begin{proof}
When $n=2p+1$, it is true since there are no $p$-relevant edges.

Suppose now that $n>2p+1$. Let $e=[u,u+k]$ be a $k$-boundary edge of $T$. It is easy to check that if $g$ is a $k$-relevant edge of $T$ of length $\ell$, then the corresponding edge $g'$ in $\underline{T}_e$ has length $\ell$ or $\ell-1$, and the later is possible only if $g=[v,v+\ell]$ with $v\cle u\cl u+k\cle v+\ell$.

Let $E$ be the $k$-crossing of $\underline{T}_e$ that arise from the edges of the $k$-star of $T$ containing $e$. Let $F$ be the set of edges of $E$ of length at least $p$. Let $G$ denote the set of edges of length $p$ of $\underline{T}_e\setminus E$ arising from an edge of length $p+1$ of $T$.

Any non $p$-relevant edge of $\underline{T}_e\smallsetminus (E\cup G)$ (resp. of $E\setminus F$) arise from one (resp. two) non $p$-relevant edge of $T$. Thus we obtain by induction that the number of $p$-relevant edges is at most $k(n-1-2p-1)+|F|+|G|$.

To conclude, we only have to observe that any two edges of $F\cup G$ are crossing, which implies that $|F|+|G|=|F\cup G|\le k$.
\end{proof}




To close this section, observe that the result of flattening several $k$-boundary edges of a $k$-triangulation is independant of the order.
Indeed, let $e$ and $f$ be two distinct $k$-boundary edges of a $k$-triangulation $T$. Let $e'$ (resp. $f'$) denote the edge of $\underline{T}_f$ (resp. $\underline{T}_e$) arising from $e$ (resp. $f$). Then $e'$ (resp. $f'$) is a $k$-boundary edge of $\underline{T}_f$ (resp. $\underline{T}_e$) and
$$\underline{\underline{T}_f}_{e'}=\underline{\underline{T}_e}_{f'}.$$
In particular, one can define the flattening of a set of $k$-boundary edges. 

Similarly, it is possible to define the inflating of a set of edges-disjoint $k$-crossings of a $k$-triangulation.


\section{Further topics and open questions}\label{sectionopen}

In this section, we discuss further topics and open questions related to multi-triangulations. We hope that at least some of them may be easier to prove and understand looking at $k$-triangulations as ``complexes of $k$-stars".

\bigskip

\paragraph{{\sc The number of $k$-triangulations}}

Remember that Jonsonn proved in~\cite{j-gt-03} that

\begin{theorem}\label{triangenumeration}
The number of $k$-triangulations of the $n$-gon is equal to the determinant
$$\det(C_{n-i-j})_{1\le i,j\le k}=\left|\begin{pmatrix} C_{n-2} & C_{n-3} & \ldots & C_{n-k} & C_{n-k-1} \\ C_{n-3} & C_{n-4} & \ldots & C_{n-k-1} & C_{n-k-2} \\ \vdots & \vdots & \ddots & \vdots & \vdots \\ C_{n-k-1} & C_{n-k-2} & \ldots & C_{n-2k+1} & C_{n-2k} \end{pmatrix}\right|,$$
where $C_m=\frac{1}{m+1}{ 2m \choose m}$ denotes the $m$-th \emph{Catalan number}.
\end{theorem}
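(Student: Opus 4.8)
The plan is to prove the Catalan-determinant formula by setting up a bijection between $k$-triangulations of the $n$-gon and families of $k$ pairwise non-crossing monotone lattice paths, and then to evaluate the number of such families with the Lindström--Gessel--Viennot (LGV) lemma. This is the natural route because the LGV lemma produces exactly a determinant $\det(C_{n-i-j})_{1\le i,j\le k}$ of the desired shape once the endpoints of the path families are lined up so that the single-path counts are Catalan numbers.

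First I would make precise the combinatorial model on the other side. The goal is to identify $k$-triangulations with $k$-tuples $(\pi_1,\dots,\pi_k)$ of Dyck-type paths from a fixed source tuple $(A_1,\dots,A_k)$ to a fixed sink tuple $(B_1,\dots,B_k)$, where a \emph{single} path from $A_i$ to $B_j$ is counted by $C_{n-i-j}$, and the non-crossing condition on the tuple corresponds to the $(k+1)$-crossing-free condition on edges. I would build this correspondence out of the star-polygon technology developed in the earlier sections: Theorem~\ref{angle} and Corollary~\ref{incidences} show that each $k$-relevant edge sits in exactly two $k$-stars, and Corollary~\ref{starsenumeration} pins down the counts ($n-2k$ stars, $k(n-2k-1)$ relevant edges). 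These incidences are what let one read off, from a $k$-triangulation, $k$ disjoint monotone ``layers'' that become the non-crossing paths; conversely the layering data reconstructs the edge set. The flattening/inflating operations of Section~\ref{sectionflatinflat}, which relate $k$-triangulations of the $n$- and $(n+1)$-gon, supply the recursion that matches the recursion $C_m = \frac{1}{m+1}\binom{2m}{m}$ underlying the single-path count, so I would use them to verify that the path lengths produce precisely the entries $C_{n-i-j}$.

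With the bijection in hand, the determinant itself follows almost formally. I would invoke the LGV lemma: the number of families of pairwise non-crossing lattice paths from the sources to the sinks equals $\det(M_{ij})$, where $M_{ij}$ is the number of single paths from $A_i$ to $B_j$. By the endpoint bookkeeping above, $M_{ij}=C_{n-i-j}$, and this is exactly the claimed determinant. A small sanity check is that the sign issues in LGV vanish here because the only non-crossing permutation of endpoints is the identity (the sources and sinks being arranged monotonically), so no signed cancellation survives and the count is a genuine (positive) determinant.

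The main obstacle, and the part deserving the most care, is establishing the bijection rigorously — in particular showing that the $(k+1)$-crossing-free condition translates \emph{exactly} into the pairwise non-crossing condition for lattice paths, with no spurious families on either side. The subtlety is that ``mutually crossing'' for chords of the polygon and ``crossing'' for monotone paths are different relations, and one must verify that the translation is faithful in both directions; the complex-of-star-polygons viewpoint (Theorem~\ref{common bisector} and Lemma~\ref{commonedge}, which control how two $k$-stars share edges and bisectors) is exactly the tool that makes this faithfulness transparent. Once that equivalence is nailed down, matching the endpoints to get $C_{n-i-j}$ and applying LGV is routine; I would expect roughly all the real work to live in the bijection and in confirming the boundary cases $n=2k+1$ and $n=2k+2$ against Example~\ref{exm:2k+2}.
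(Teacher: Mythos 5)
There is a genuine gap, and it sits exactly where you located ``the main obstacle'': the bijection between $k$-triangulations and $k$-tuples of pairwise non-crossing Dyck paths is never constructed, only asserted to follow from the star-polygon machinery. This is not a verification one can defer: it is precisely the open problem this paper points to. An explicit such bijection is known only for $k=2$~\cite{e-btdp-06}, and Rubey's remark quoted in Section~\ref{sectionopen} (``the problem of finding a completely bijective proof [...] remains open'') refers to exactly this step; the paper itself only expresses the \emph{hope} that $k$-stars might be used to build one. Worse, your concrete sketch --- reading off from a $k$-triangulation ``$k$ disjoint monotone layers'' whose pairwise non-crossing mirrors $(k+1)$-crossing-freeness --- provably fails: by Theorem~\ref{colorable}, the $k$-relevant edges of a $k$-triangulation split into $k$ mutually non-crossing classes (equivalently, $k$ disjoint $k$-accordions) if and only if the triangulation has no internal $k$-star, and such $k$-colorable triangulations number at most $2^{(k+1)n}$, exponentially fewer than the roughly $4^{kn}$ triangulations in total. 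So for most $k$-triangulations the layers you need do not exist, and an LGV count of layered families would drastically undercount. (Elizalde's $k=2$ bijection does start from a $2$-coloring, but not one whose color classes are crossing-free chord sets; translating chord-crossings into path-crossings is exactly where all of his work lies.)

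Note also that the paper does not prove Theorem~\ref{triangenumeration} at all: it cites Jonsson~\cite{j-gt-03} and surveys the known proofs --- Jonsson's induction on $n$ (based on an analogue of Corollary~\ref{enumerationinduction}), and the polyomino route, in which a $k$-triangulation is identified with a maximal $k$-diagonal-free filling of the polyomino $\Omega_n$, the number of such fillings is shown to depend only on the content of the polyomino (\cite{j-gtdfssp-05}, simplified and generalized in~\cite{k-gdidcffs-06, r-idsfmp-07}), and the triangular polyomino $A_n$ is then enumerated by the determinant~\cite{ht-gbmdpi-92}. The one part of your plan that is genuinely routine is the final evaluation: that pairwise non-crossing Dyck $k$-paths are counted by $\det(C_{n-i-j})$ is known~\cite{dcv-ecytbh-86}. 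Everything before that step is the actual content of the theorem, and it is missing from the proposal.
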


On the other hand, the Catalan determinant in this statement also counts two other classes of objects:
\begin{enumerate}[(i)]
\item Dyck $k$-paths of semi-length $n-2k$:

A \emph{Dyck path} of \emph{semi-length} $\ell$ is a lattice path using north steps $N=(0,1)$ and east steps $E=(1,0)$ starting from $(0,0)$ and ending at $(\ell,\ell)$, and such that it never goes below the diagonal $y=x$ (see Fig.~\ref{dyck}~(a)). We call \emph{Dyck $k$-path} of \emph{semi-length} $\ell$ any $k$-tuple $(d_1,\ldots,d_k)$ of Dyck paths of semi-length $\ell$ such that each $d_i$ never goes above $d_{i-1}$, for $2\le i\le k$ (see Fig.~\ref{dyck}~(b)).

\begin{figure}
\centerline{\includegraphics[scale=1]{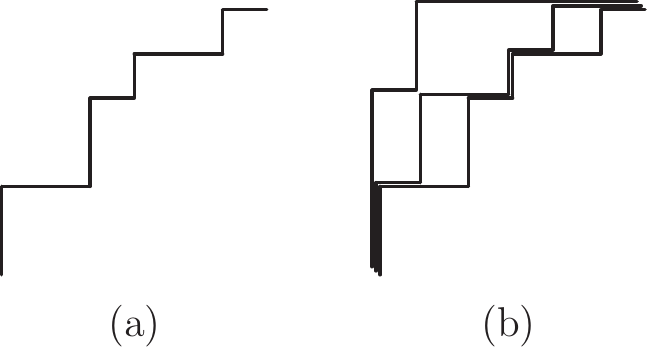}}
\caption{\small{A Dick path (a) and a Dyck $3$-path (b) of semi-length $6$.}}\label{dyck}
\end{figure}

That the Catalan determinant of Theorem~\ref{triangenumeration} also counts the number of Dyck $k$-paths of semi-length $n-2k$ is proved in~\cite{dcv-ecytbh-86}.

\medskip

\item Maximal $k$-diagonal-free fillings of the triangular polyomino $A_n$:

A \emph{stack polyomino} of shape $(s_1,\ldots,s_r)$ (where there exists $t\in\{1,\ldots,r\}$ such that $0<s_1 \le \ldots\le s_t$ and $s_t\ge \ldots\ge s_r>0$) is the following subset of $\mathbb{Z}^2$, whose elements are called \emph{boxes}:
$$\{(i,j)\;|\; 1\le i\le r, 0\le j\le s_i\}.$$
A \emph{filling} of a polyomino is an assignment of $0$ and $1$ to its boxes.
For $\ell\in\mathbb{N}$, an \emph{$\ell$-diagonal} of a filling of a polyomino $\Lambda$ is a chain $(\alpha_1,\beta_1),\ldots,(\alpha_\ell,\beta_\ell)$ of $\ell$ boxes of $\Lambda$ filled with $1$ and such that
\begin{itemize}
\item $\alpha_1<\alpha_2<\ldots<\alpha_\ell$ and $\beta_1<\beta_2<\ldots<\beta_\ell$, and
\item the rectangle $\{(\alpha,\beta)\;|\; \alpha_1\le \alpha\le\alpha_\ell, \beta_1\le \beta\le\beta_\ell\}$ is a subset of $\Lambda$.
\end{itemize}
See Figure~\ref{polyominoes1} for an illustration (we use the matrix convention to index the boxes of a polyomino; that is, the row index increases as we go down). See~\cite{j-gtdfssp-05, k-gdidcffs-06, r-idsfmp-07} for more details.

\begin{figure}
\centerline{\includegraphics[scale=1]{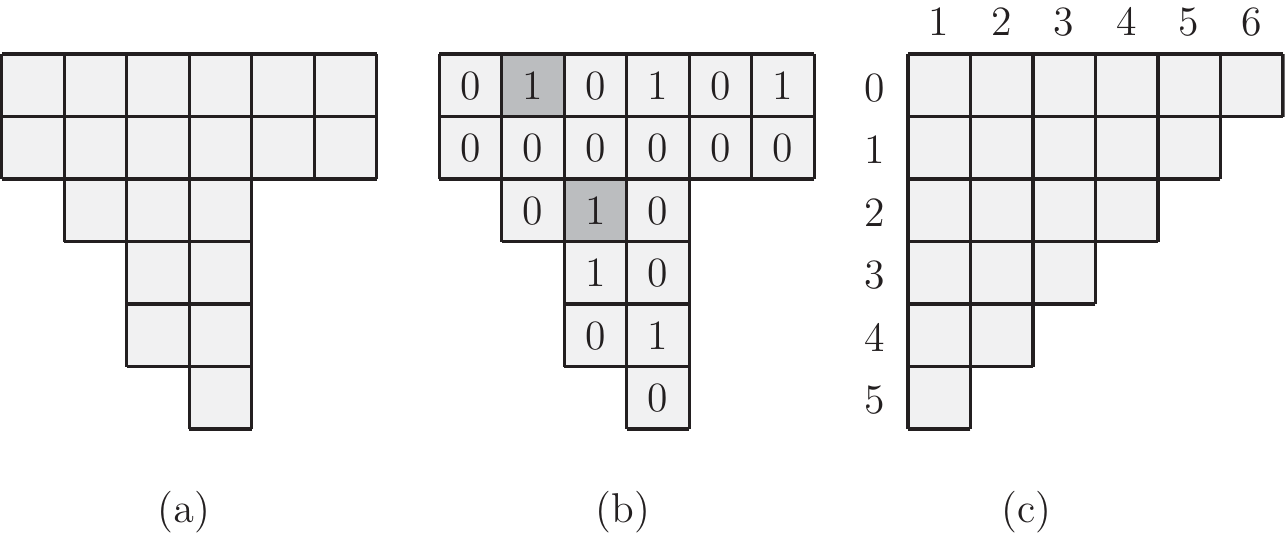}}
\caption{\small{ (a) a stack polyomino of shape $(2,3,5,6,2,2)$; (b) a filling of this polyomino, with a shaded $2$-diagonal; (c) the polyomino $A_7$.}}\label{polyominoes1}
\end{figure}

Let $A_n$ denote the polyomino $\{(i,j)\;|\;0\le i\le n-2,\; 1\le j\le n-i-1\}$ (see Fig.~\ref{polyominoes1}). 
That maximal $k$-diagonal-free fillings of the triangular polyomino $A_n$ are enumerated by the determinant of Theorem~\ref{triangenumeration} is proved in \cite{ht-gbmdpi-92}.

\end{enumerate}

Let us now give a little overview of the different proofs of Theorem~\ref{triangenumeration}:

\begin{enumerate}
\item First, Jonsson gives in~\cite{j-gt-03} a direct, but complicated, proof using induction on $n$, based in an analogue of our Corollary~\ref{enumerationinduction}.

\item In~\cite{j-gtdfssp-05}, Jonsson gives another proof, this time indirect. He observes that given any subset $E$ of $E_n$, an $\ell$-crossing of $E$ corresponds to an $\ell$-diagonal in the upper triangular part of the adjacency matrix of $E$. In particular, this gives a straightforward bijection between $k$-triangulations of an $n$-gon and maximal $k$-diagonal-free fillings of another polyomino, namely
$$\Omega_n:=\{(i,j)\;|\; 0\le i\le n-2,\; i+1\le j\le n-1\}.$$

\begin{example}
\rm
In Figure~\ref{polyominoes2}, we show the filling of $\Omega_8$ that corresponds to the $2$-triangulation of the octagon of Figure~\ref{fig:2triang8points}. For the sake of clarity, we have omitted $0$'s and replaced $1$'s by dots. We also have shaded the $2$-relevant boxes of $\Omega_8$.
\end{example}

\noindent Jonsson then proves that the number of maximal $k$-diagonal-free fillings of a stack polyomino of shape $(s_1,\ldots,s_r)$ depends only on its \emph{content}, that is, the multiset $\{s_1,\ldots,s_r\}$. Thus, he can use the polyomino $A_n$ instead of $\Omega_n$, and the aforementioned result of~\cite{ht-gbmdpi-92}, to enumerate $k$-triangulations.

\item Krattenthaler~\cite{k-gdidcffs-06}, studying different restrictions on diagonals in fillings of polyominoes, proves in a simpler way that the number of fillings of a stack polyomino only depends on its content. Rubey~\cite{r-idsfmp-07} generalizes this same result to moon polyominoes. He also emphasizes that ``the problem of finding a completely bijective proof [...] remains open. However, it appears that this problem is difficult".

\item Elizalde~\cite{e-btdp-06} gives an explicit bijection between $2$-triangu\-lations and Dyck $2$-paths. He begins with coloring the relevant edges of a $2$-triangulation with two colors. Then, each color defines one of the two Dyck paths.

\end{enumerate}

\begin{figure}
\centerline{\includegraphics[scale=1]{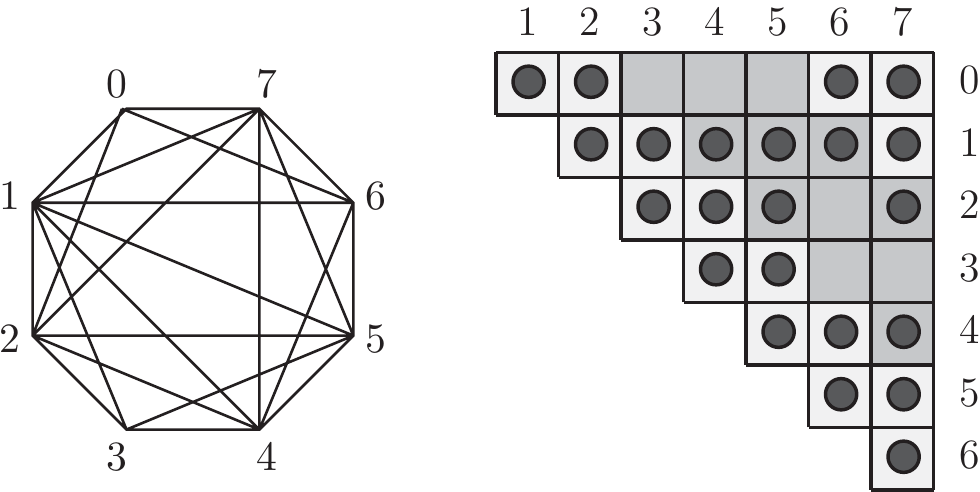}}
\caption{\small{The filling of $\Omega_8$ that corresponds fo the $2$-triangulation of Figure~\ref{fig:2triang8points}.}}\label{polyominoes2}
\end{figure}

Our hope would be that looking at $k$-triangulations as complexes of $k$-stars might perhaps be used to give a general explicit bijection between $k$-triangulation and Dyck $k$-paths. Observe that the number $n-2k$ of $k$-stars equals the semi-length of the Dyck $k$-paths we need to construct. Hence, the idea would be to use each $k$-star to represent one pair of steps in each path.

\medskip

To close this discussion, remember that Catalan numbers also count many other ``Catalan structures" (see Exercise~6.19 in~\cite{s-ec2-99}). Routed binary trees seem particularly important for our topic since they are dual graphs of triangulations.
It would be interesting to understand the corresponding objects for $k$-triangulations.

\bigskip

\paragraph{{\sc Rigidity}}

A triangulation of a convex $n$-gon is a \emph {minimally rigid} object in the plane. That is to say, any continuous movement of its vertices that preserves all edges lengths extends to an isometry of the plane, and the triangulation is a minimal graph for this property. Moreover, it is generically so, meaning that the same is true for all embeddings of the same graph with sufficiently generic choice of positions for the vertices.
See~\cite{g-cf-01} for a very nice expository introduction to combinatorial rigidity, and~\cite{gss-cr-93} for a more technical one.

In the plane, generic minimal rigidity of a graph $G=(V,E)$ is equivalent to the following \emph{Laman property}: $|E|=2|V|-3$ and any subgraph on $2\le m\le |V|$ vertices has less than $2m-3$ edges. The Laman property is a special case of the following:

\begin{definition}
A graph $G=(V,E)$ is said to be \emph{$(p,q)$-sparse} if $|E'|\le p|V(E')|-q$ for any subset $E'$ of $E$. 
\end{definition}

Sparsity for several families of parameters $(p,q)$ is relevant in rigidity theory, matroid theory and pebble games. See~\cite{ls-pgasg-07,st-shpga-07} and the references therein. Observe that Corollary~\ref{starsenumeration} implies:

\begin{corollary}
 $k$-triangulations are $\left(2k,{2k+1 \choose 2}\right)$-sparse graphs.
\end{corollary}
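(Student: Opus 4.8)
The plan is to show that any $k$-triangulation $T$, viewed as a graph $G=(V,E)$ with $V=V_n$, satisfies the sparsity bound $|E'|\le 2k|V(E')| - \binom{2k+1}{2}$ for every subset $E'\subseteq E$. The global count is already in hand: Corollary~\ref{starsenumeration} gives $|E| = k(2n-2k-1)$, and since $V(E)=V_n$ has $n$ vertices, we check $2kn - \binom{2k+1}{2} = 2kn - k(2k+1) = k(2n-2k-1) = |E|$, so the inequality holds with equality on the full edge set. The real content is therefore the bound on every proper subgraph, and the natural strategy is to reduce an arbitrary $E'$ to the case where $E'$ consists of all edges of $T$ induced on some vertex subset $W = V(E')$, since adding the remaining edges of $T$ spanned by $W$ only increases $|E'|$ without changing $V(E')$. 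Thus **first I would** replace $E'$ by the full induced subgraph $T[W]$ and assume $|W| = m$ with $2k+1 \le m \le n$ (for $m \le 2k$ every edge is automatically present and one checks the bound directly, as $\binom{m}{2}\le 2km - \binom{2k+1}{2}$ fails to be binding only in degenerate ranges that must be verified separately).

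**The key step** is to count edges of $T$ induced on a cyclic window of $m$ consecutive vertices, and more generally on an arbitrary $m$-subset $W$. The cleanest route uses the complex-of-stars structure: an induced subgraph $T[W]$ on $m$ vertices in convex position is itself a $(k+1)$-crossing-free graph, but it need not be a $k$-triangulation of the $m$-gon because $W$ sits inside the larger $n$-gon. Nevertheless, I would argue that $T[W]$ has at most as many edges as a $k$-triangulation of the convex $m$-gon, namely $k(2m-2k-1) = 2km - \binom{2k+1}{2}$, whenever $m \ge 2k+1$. The point is that $T[W]$ is $(k+1)$-crossing-free as a graph on $m$ points in convex position (crossing depends only on cyclic order, which $W$ inherits), so by the maximality characterization in Corollary~\ref{starsenumeration}, part~(2), its edge count cannot exceed that of a maximal such family, which is exactly $k(2m-2k-1)$. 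This is precisely the desired sparsity bound.

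**The main obstacle** I anticipate lies in the small-$m$ regime $m < 2k+1$, where the target bound $2km - \binom{2k+1}{2}$ can exceed $\binom{m}{2}$ and the comparison to an $m$-gon $k$-triangulation breaks down (there is no room for a $k$-star on fewer than $2k+1$ points). For $m \le 2k$ the induced graph $T[W]$ contains no $(k+1)$-crossing trivially, and in fact $\binom{m}{2} \le 2km - \binom{2k+1}{2}$ needs to be checked: writing $f(m) = 2km - \binom{2k+1}{2} - \binom{m}{2}$, one computes that $f$ is concave in $m$ with $f(2k+1) = f(2k) \ge 0$, so the inequality indeed holds throughout $1 \le m \le 2k+1$, confirming sparsity in the degenerate range. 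Assembling these cases—full induced subgraph reduction, the crossing-free edge bound from Corollary~\ref{starsenumeration} for $m \ge 2k+1$, and the elementary quadratic check for $m \le 2k$—yields $(2k, \binom{2k+1}{2})$-sparsity for all subsets, completing the proof.
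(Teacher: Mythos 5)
Your main reduction is exactly the paper's intended one-line argument: pass to the induced subgraph on $W=V(E')$, observe that crossings depend only on the cyclic order that $W$ inherits from $V_n$, so $T[W]$ is a $(k+1)$-crossing-free subset of $E_m$ ($m=|W|$), extend it to a maximal such subset (a $k$-triangulation of the $m$-gon) and invoke Corollary~\ref{starsenumeration} to get $|E'|\le k(2m-2k-1)=2km-\binom{2k+1}{2}$. For $m\ge 2k+1$ this is correct and complete, and it is precisely what the paper means by ``Corollary~\ref{starsenumeration} implies''.

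The gap is in your treatment of $m\le 2k$: your quadratic check is backwards. The function $f(m)=2km-\binom{2k+1}{2}-\binom{m}{2}$ is a concave parabola whose roots are exactly $m=2k$ and $m=2k+1$; concavity therefore forces $f(m)<0$ for $m<2k$, not $f(m)\ge 0$. Concretely, for a single edge ($m=2$) the required inequality reads $1\le 4k-\binom{2k+1}{2}=3k-2k^2$, which is false for every $k\ge 2$. So not only does your verification fail --- under the paper's literal definition of $(p,q)$-sparsity, in which the count is imposed on \emph{every} subset $E'$, no graph containing even one edge is $\bigl(2k,\binom{2k+1}{2}\bigr)$-sparse when $k\ge 2$, and no argument can close this range. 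The statement has to be read with the standard rigidity-theoretic convention (Maxwell/Laman-type counts in dimension $d=2k$): the bound $dm-\binom{d+1}{2}$ is only imposed on subgraphs spanning at least $d=2k$ vertices. Note that at $m=2k$ and $m=2k+1$ that bound equals $\binom{m}{2}$, so those cases are automatic, and everything larger is your main case. With that reading your argument coincides with the paper's; your attempt to prove the stronger, unrestricted statement was doomed, and the concavity claim is where the falsity surfaces.
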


Being $\left(d,{d+1 \choose 2}\right)$-sparse is a necessary, but not sufficient for $d>2$, condition for a graph being generically minimally rigid in dimension $d$~\cite{gss-cr-93}. This suggests the following
conjecture:

\begin{conjecture}
Every $k$-triangulation is generically minimally rigid in dimension $2k$.
\end{conjecture}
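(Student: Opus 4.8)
The plan is to prove the conjecture by induction on $n$, using the flattening and inflating operations of Section~\ref{sectionflatinflat} to relate the $k$-triangulations of the $n$-gon and of the $(n+1)$-gon. Since rigidity is a generic (Zariski-open) property, and since Corollary~\ref{starsenumeration} already gives that a $k$-triangulation of the $n$-gon has exactly $k(2n-2k-1)=2kn-{2k+1 \choose 2}$ edges --- precisely the count of a minimally rigid graph in dimension $2k$ --- it suffices to prove that $k$-triangulations are \emph{generically infinitesimally rigid} in dimension $2k$. Minimality (isostaticity) then follows automatically: a generically rigid graph with exactly $2kn-{2k+1 \choose 2}$ edges is independent, hence minimally rigid.

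For the base case I would take $n=2k+1$, where the unique $k$-triangulation is the complete graph $K_{2k+1}$, that is the $2k$-simplex, which is infinitesimally rigid in $\mathbb{R}^{2k}$. For the inductive step, given a $k$-triangulation $T$ of the $(n+1)$-gon I would flatten a $k$-boundary edge $e$ to obtain a $k$-triangulation $\underline{T}_e$ of the $n$-gon, which is generically rigid in dimension $2k$ by the inductive hypothesis. The whole argument then reduces to a single key lemma: \emph{inflating preserves generic infinitesimal rigidity in dimension $2k$}, so that $T=\overline{\underline{T}_e}^E$ inherits rigidity from $\underline{T}_e$.

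To establish this key lemma, I would try to realize inflating as a composition of operations already known to preserve generic rigidity in \emph{all} dimensions: the Henneberg $0$- and $1$-extensions (the latter via the classical trick of placing the new vertex on the line through the two endpoints of the split edge, then invoking openness of rigidity), and especially Whiteley's vertex-splitting operation, which is valid in every dimension. Indeed, inflating at $\llbracket s_1,s_k\rrbracket$ introduces a new vertex $s_0$ adjacent to $s_1$ and reroutes the ``far'' edges of $s_1$ onto $s_0$, so that it looks structurally like splitting the vertex $s_1$ into $s_0$ and $s_1$ while retaining $2k-1$ common neighbours. If this identification can be made precise, rigidity propagates through the induction and the conjecture follows.

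The hard part will be exactly this matching. Inflating is not literally a single Henneberg move: it simultaneously creates a vertex of degree generally larger than $2k$ and reassigns a whole family of existing edges (the phenomenon recorded by the ``hidden configurations''), and one must check that the common-neighbour count is exactly $2k-1$, that the required split edge is present, and that the remaining edges distribute correctly, all while tracking lengths through the relabelling. Moreover, unlike in dimension $2$, there is no Maxwell--Laman-type combinatorial characterization of generic rigidity in dimension $\ge 3$, so the correct edge count cannot by itself close the argument and one is forced into a genuinely geometric analysis of the rigidity matrix. For $k=2$ (dimension $4$) the number of local configurations is small enough that this analysis can be carried out by hand, which is how I would settle that case; obtaining a uniform argument for all $k$ is the genuinely open difficulty.
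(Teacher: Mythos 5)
Your skeleton is the same as the paper's: the statement is in fact a conjecture in the paper, proved there only for $k=2$ (Theorem~\ref{rigid}), by exactly the induction you describe --- base case $K_{2k+1}$, flattening/inflating to pass between $n$ and $n+1$ vertices, Whiteley's vertex-splitting theorem, and minimality from the edge count of Corollary~\ref{starsenumeration}. The genuine gap in your plan is in the key lemma. You propose to flatten an \emph{arbitrary} $k$-boundary edge and then show that the corresponding inflating preserves generic rigidity; but an arbitrary inflating is not a single vertex split, nor obviously a composition of rigidity-preserving moves: rule (iii) of the definition reroutes edges at \emph{all} of the vertices $s_1,\dots,s_k$ simultaneously, so several neighbourhoods change at once, and ``inflating preserves generic rigidity'' in this generality is essentially as open as the conjecture itself --- even in dimension $4$. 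The paper's proof of the $k=2$ case avoids this by choosing \emph{which} star to flatten: if the flattened $k$-star has $k$ $k$-boundary edges (equivalently, $k-1$ consecutive $k$-ears), then no rerouting occurs at the middle vertices and the inverse of the flattening is \emph{exactly} a vertex split on $2k-1$ edges. For $k=2$ such a star always exists: by Corollary~\ref{earsenumeration} there are at least four $2$-ears, and the $2$-star on the outer side of any $2$-ear has two $2$-boundary edges (Lemma~\ref{starwithboundary}). So your proposed ``by hand'' case analysis for $k=2$ is not needed and, if applied to an arbitrary flattening, would in fact encounter inflatings that are not vertex splits.

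This also means you misdiagnose where the conjecture gets stuck for $k\ge 3$. It is not the absence of a Maxwell--Laman characterization, nor the need for a geometric analysis of the rigidity matrix: Whiteley's vertex-splitting theorem is available in every dimension, and no matrix computation enters the $k=2$ proof. The obstruction is purely combinatorial: for $k\ge 3$ there exist $k$-triangulations with no $k$-star containing $k$ $k$-boundary edges, equivalently with no $k-1$ consecutive $k$-ears (see Fig.~\ref{ctrexm3}), so on such a triangulation the induction has no star whose inflating is a vertex split, and the inductive step cannot be performed at all.
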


We can prove this conjecture for $k=2$:

\begin{theorem}\label{rigid}
Every $2$-triangulation is a generically minimally rigid graph in dimension $4$.
\end{theorem}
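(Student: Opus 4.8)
The plan is to reduce the statement to \emph{generic infinitesimal rigidity} of the abstract graph underlying a $2$-triangulation. By Corollary~\ref{starsenumeration} such a graph has exactly $k(2n-2k-1)=4n-10$ edges, which equals the isostatic count $dn-\binom{d+1}{2}$ in dimension $d=4$. Hence a $2$-triangulation is rigid in $\mathbb{R}^4$ if and only if it is \emph{minimally} rigid (its edges are independent), so it suffices to prove rigidity. Moreover, generic rigidity is a property of the abstract graph alone, independent of any particular embedding; in particular the convex position of $V_n$ plays no role, and one is free to test rigidity at a generic point configuration in $\mathbb{R}^4$.

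I would then induct on $n$. The base case is $n=2k+1=5$, where the unique $2$-triangulation is $K_5$ (Example~\ref{exm:2k+2}); as a $4$-simplex it is generically minimally rigid in $\mathbb{R}^4$. For the inductive step I use the flattening/inflating correspondence of Section~\ref{sectionflatinflat}. Every $2$-triangulation $T'$ of the $(n+1)$-gon has a $k$-boundary edge $e$, and by the inverse-operation results of that section $T'=\overline{\underline{T'}_e}^{E}$ is the inflation of the $2$-triangulation $\underline{T'}_e$ of the $n$-gon along an external $2$-crossing $E$. Thus every $2$-triangulation of the $(n+1)$-gon arises from one of the $n$-gon by a single inflating, and it is enough to show that inflating preserves generic rigidity in $\mathbb{R}^4$.

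To handle the inflating step I would recast it as a rigidity-preserving combinatorial move. Reading the flattening backwards, inflating introduces the new vertex $s_0$ and redistributes the edges incident to the flattened $k$-star $S$ among $s_0,s_1,\dots,s_k$, adding $2k=d$ edges in total (matching the isostatic bookkeeping). I would identify this with \emph{Whiteley's vertex-splitting theorem}: in $\mathbb{R}^d$, splitting a vertex into two vertices joined by an edge, with the two copies sharing at least $d-1$ common neighbours and the remaining neighbours distributed arbitrarily, carries a generically rigid graph to a generically rigid graph. Concretely, for $k=2$ the edge $[s_0,s_1]$ plays the role of the splitting edge, and I would check that $s_0$ and $s_1$ share the required $d-1=3$ common neighbours, produced by the short edges $[s_0,s_1]$ and $[s_0,s_2]$ created in the inflating together with the vertices of the star $S$. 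Feeding this into the induction would then close the argument.

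The main obstacle is precisely the faithful translation of inflating into a vertex split (or a short sequence of rigidity-preserving moves) and the verification of its hypotheses. Inflating does not merely attach $s_0$ as a degree-$d$ vertex (a $0$-extension); it simultaneously shifts incidences among several star vertices, so one must argue that the prescribed redistribution of edges is exactly the one allowed in a vertex split and that the $d-1$ shared neighbours lie in general position. This is where $k=2$ is genuinely used: the $2$-star is small enough that the three shared neighbours can always be located among $s_1,s_2$ and their adjacent vertices, whereas for larger $k$ the inflating becomes an intricate multi-vertex operation to which no single known rigidity-preserving construction directly applies, which is why the general conjecture is left open. As a fallback, should the clean vertex-split identification resist, I would instead verify directly that the rigidity matrix of $\overline{T}^E$ has full rank, by extending a generic infinitesimal-rigidity certificate of $T$ across the new vertex $s_0$ and the relabelled edges.
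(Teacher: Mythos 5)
Your overall strategy is the same as the paper's: induction on $n$ with base case $K_5$, realizing the passage from the $n$-gon to the $(n+1)$-gon as a flattening/inflating, and invoking Whiteley's vertex-splitting theorem in dimension $4$. The gap is in your reduction ``every $2$-triangulation of the $(n+1)$-gon arises by a single inflating, so it suffices to show that inflating preserves generic rigidity'': it is \emph{not} true that every inflating is a vertex split, so flattening an arbitrary $2$-boundary edge does not work. Write $s_0,s_1,s_2,s_3,s_4$ for the vertices, in circle order, of the $2$-star $S$ containing the flattened edge $e=[s_0,s_2]$. Rule (iii) of the flattening always moves the star edge $[s_1,s_4]$ onto $[s_2,s_4]$, and $[s_1,s_4]$ reappears in $\underline{T}_e$ only if $[s_0,s_4]\in T$ (via the $i=0$ case of the same rule). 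If $[s_0,s_4]\notin T$ --- possible as soon as $[s_0,s_4]$ is $2$-relevant, i.e.\ as soon as $s_0-1\notin S$ --- then $T$ contains the edge $[s_1,s_4]$, which is not incident to the new vertex $s_0$, while $\underline{T}_e$ does not contain it. Since any vertex split of a graph $G$ with new vertex $v$ produces a graph $T'$ with $T'-v\subseteq G$, no choice of splitting vertex can exhibit $T$ as a vertex split of $\underline{T}_e$. This actually happens: in the $2$-triangulation $T$ of the decagon whose $2$-relevant edges are $[0,4],[0,5],[0,6],[1,4],[1,7],[1,8],[2,7],[3,6],[3,7],[6,9]$, the $2$-star containing $[0,2]$ is $\{0,1,2,4,7\}$ (and $[0,2]$ is its only boundary edge); here $[0,7]\notin T$, so flattening $[0,2]$ glues $[1,7]$ onto $[2,7]$ and the inverse operation is not a vertex split of any kind.

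This is exactly the point where the paper makes a choice that your argument omits, and it is the crux of the proof: take $S$ to be a $2$-star with \emph{at least two} $2$-boundary edges --- such a star exists because one sits on the outer side of any $2$-ear, and $2$-ears exist by Corollary~\ref{earsenumeration} --- and flatten the boundary edge $[s_0,s_0+2]$ whose predecessor $s_0-1$ is also a vertex of $S$ (by Lemma~\ref{starwithboundary} the boundary edges of $S$ are consecutive, so this choice is available). Then $s_4=s_0-1$, the edge $[s_0,s_4]$ is $2$-irrelevant and hence automatically present, the cascade above is harmless, and the inverse of the flattening is honestly a vertex split of $s_1$ into $\{s_0,s_1\}$ along the three common neighbours $s_2,s_3,s_4$. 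Note that this also corrects your diagnosis of where $k=2$ enters: it is not that the $2$-star is ``small enough'' for every inflating to be a split, but that every $2$-triangulation possesses a $2$-star with two boundary edges, whereas for $k\ge 3$ there exist $k$-triangulations with no $k$-star having $k$ boundary edges (Fig.~\ref{ctrexm3}) --- which is precisely why the paper's argument does not extend and the general conjecture remains open. Your fallback (extending a rigidity certificate across the new vertex by hand) is where all of this difficulty would reappear, so it cannot be waved through.
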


\begin{proof}
We prove by induction on $n$ that $2$-triangulations are generically rigid in dimension $4$. Minimality then follows from the fact that a generically rigid graph in dimension $d$ needs to have at least $nd-{d+1 \choose 2}$ edges.

Induction begins with the unique $2$-triangulation on $5$ points, that is,  the complete graph $K_5$, which is generically rigid in dimension $4$.

For the inductive step, 
let us recall the following graph-theoretic construction called ``vertex split''
(see Fig.~\ref{split} and~\cite{w-vsif-90} for more details). Let $G$ be a graph, $u$ a vertex of $G$ and $U$ the adjacent vertices of $u$. Let $U=X\sqcup Y\sqcup Z$ be a partition of $U$. A \emph{vertex split} of $u$ on $|Y|$ edges is the graph $G'$ obtained from $G$ as follows:
\begin{enumerate}[(i)]
\item remove the edges $[u,z]$, for $z\in Z$;
\item add a new vertex $v$;
\item add new edges $[u,v]$, $[v,y]$, for $y\in Y$ and $[v,z]$, for $z\in Z$.
\end{enumerate}

\begin{figure}
\centerline{\includegraphics[scale=1]{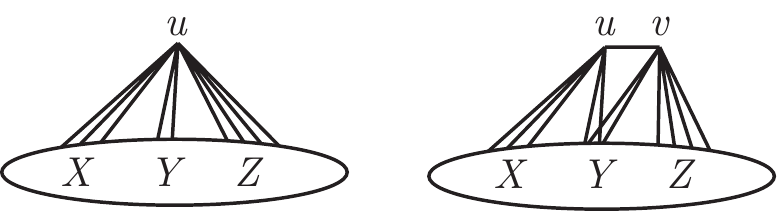}}
\caption{\small{A vertex split on $3$ edges.}}\label{split}
\end{figure}

The key result that we need is that a vertex split on $d-1$ edges in a generically rigid graph in dimension $d$ is also generically rigid in dimension $d$~\cite{w-vsif-90}.


So, let $n\ge 5$ and assume that we have already proved that every $2$-triangulation of the $n$-gon is rigid. Let $T$ be a $2$-triangulation of the $(n+1)$-gon. Let $S$ be a $2$-star of $T$ with at least two $2$-boundary edges (such a $2$-star exists since it appears in the ``outer" side of any $2$-ear). It is easy to check that inverse transformation of the flattening of $S$ is exactly a vertex split on $3$ edges. Thus, the result follows.
\end{proof}

Generalizing this proof, observe that if $T$ is a $k$-triangulation and $S$ a $k$-star of $T$ with $k$ $k$-boundary edges (or equivalently $k-1$ consecutive $k$-ears), then the inverse transformation of the flattening of $S$ is exactly a vertex split on $2k-1$ edges. However, our proof of Theorem~\ref{rigid} can not be directly applied since there exist $k$-triangulations with no $k$-star containing $k$ $k$-boundary edges, or equivalently, without $k-1$ consecutive $k$-ears, for $k\ge 3$ (see Fig.~\ref{ctrexm3}).

\begin{figure}
\centerline{\includegraphics[scale=1]{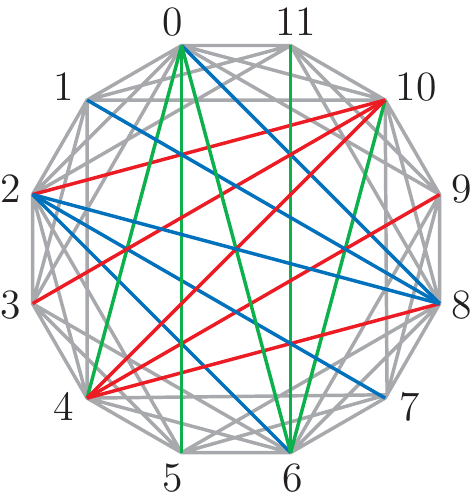}}
\caption{\small{A $3$-triangulation without $2$ consecutives $3$-ears.}}\label{ctrexm3}
\end{figure}

\bigskip

\paragraph{{\sc Multi-associahedron}}

Let $\Delta_{n,k}$ be the simplicial complex of all subsets of $k$-relevant edges of $E_n$ that do not contain any $(k+1)$-crossing.
Facets and ridges in $\Delta_{n,k}$ are $k$-triangulations of the $n$-gon and flips between them. Corollary~\ref{starsenumeration} proves that $\Delta_{n,k}$ is pure, and Corollary~\ref{graphflips} suggests that it could be a sphere; this was proved in~\cite{j-gt-03,j-gtdfssp-05}. But it remains a main open question to know whether $\Delta_{n,k}$ is polytopal. That is, whether it is the boundary complex of a polytope.

\begin{figure}[!h]
\centerline{\includegraphics[scale=.9]{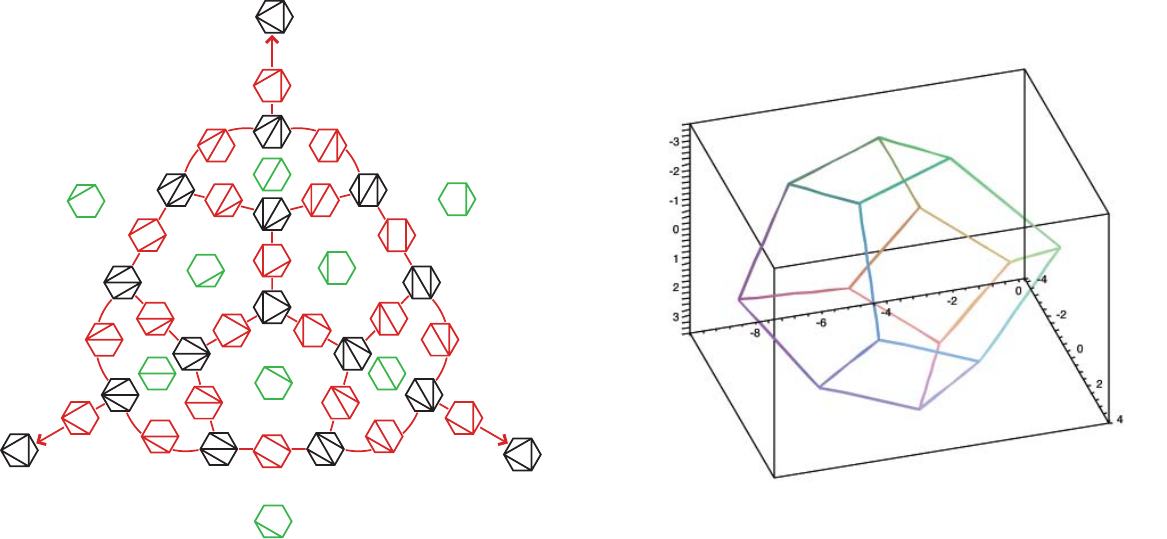}}
\caption{\small{The simplicial complex $\Delta_{6,1}$ and a realization of the corresponding associahedron.}}\label{associahedron}
\end{figure}

So far, we only know that this holds for:
\begin{itemize}
\item $k=1$: $\Delta_{n,1}$ is the boundary complex of the polar of the associahedron (see Fig.~\ref{associahedron}). Various realizations have been proposed; among others see~\cite{bfs-ccsp-90, fr-ga-05, hl-rac-07, l-at-89, l-rsp-04}.

\item $n=2k+1$: there is a unique $k$-triangulation.

\item $n=2k+2$: there are $k+1$ $k$-triangulations (Example~\ref{exm:2k+2}) 
and $\Delta_{2k+2,k}$ is realized by the $k$-simplex. 

\item $n=2k+3$:

\begin{lemma}
\label{lemma:2k+3}
For any $k\ge 1$, the simplicial complex $\Delta_{2k+3,k}$ is realized by the cyclic polytope of dimension $2k$ with $2k-3$ vertices.
\end{lemma}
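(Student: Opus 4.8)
The plan is to describe $\Delta_{2k+3,k}$ purely in terms of its minimal non-faces, to do the same for the boundary of the cyclic polytope $C(2k+3,2k)$ (whose vertex number is indeed $2k+3$, matching the number of $k$-relevant edges given by Corollary~\ref{starsenumeration}), and then to glue the two descriptions using the flip results of Section~\ref{sectionflips}. First I would pin down the combinatorics of $\Delta_{2k+3,k}$. In the $(2k+3)$-gon the only $k$-relevant edges are the $2k+3$ edges $e_i=[i,i+k+1]$ of length $k+1$ (the maximal possible length), so these are the vertices of $\Delta_{2k+3,k}$, and no edge of length $\le k$ can occur in a $(k+1)$-crossing. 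Inspecting the two arcs cut off by $e_0=[0,k+1]$ shows that $e_i$ and $e_j$ cross if and only if the cyclic distance $|i-j|$ is at most $k$, i.e. different from the maximal value $k+1$. Hence the \emph{non-crossing graph} on $\{e_0,\dots,e_{2k+2}\}$ joins $e_i$ to $e_{i\pm(k+1)}$; since $\gcd(k+1,2k+3)=1$ this is a single $(2k+3)$-cycle $C$. A $(k+1)$-crossing is therefore a clique of size $k+1$ in the complement of $C$, that is, a maximum independent set of the odd cycle $C$. Labelling the vertices $0,\dots,2k+2$ along $C$, these are exactly the $2k+3$ \emph{alternating} sets $\{j,j+2,\dots,j+2k\}$, $j\in\mathbb{Z}_{2k+3}$. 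As the minimal non-faces of $\Delta_{n,k}$ are precisely its minimal $(k+1)$-crossings, this identifies $\Delta_{2k+3,k}$ with the complex on a cyclic vertex set of size $2k+3$ whose minimal non-faces are these $2k+3$ alternating $(k+1)$-subsets.

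Next I would compute the minimal non-faces of $\partial C(2k+3,2k)$. By Gale's evenness criterion, the facets of $C(2k+3,2k)$ are the $2k$-subsets of $\{1,\dots,2k+3\}$ whose $3$-element complement $\{a<b<c\}$ satisfies $b-a$ and $c-b$ odd. The polytope $C(n,2k)$ carries a vertex-transitive $\mathbb{Z}_n$ symmetry (realize it on the trigonometric moment curve), so it suffices to treat one representative, say $N=\{1,3,\dots,2k+1\}$. Its complement consists of the even numbers $\{2,4,\dots,2k+2\}$ together with the single odd number $2k+3$; since a Gale triple needs two odd gaps and hence two elements of one parity and one of the other, while only one odd number is available, no facet avoids $N$. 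Thus $N$ is a non-face, and it is minimal because $C(2k+3,2k)$ is $k$-neighborly, so every $k$-subset is a face. By the cyclic symmetry all $2k+3$ alternating sets $\{i,i+2,\dots,i+2k\}$ are minimal non-faces, and after identifying the two cyclic vertex orders these coincide exactly with the minimal non-faces found for $\Delta_{2k+3,k}$.

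It then remains to upgrade ``the alternating sets are among the minimal non-faces of $\partial C(2k+3,2k)$'' to ``they are all of them''. Since every minimal non-face of $C(2k+3,2k)$ contains all the alternating sets, every face of $C(2k+3,2k)$ avoids all of them and is therefore a face of $\Delta_{2k+3,k}$; in particular every facet of $C$ is a facet of $\Delta_{2k+3,k}$, both complexes being pure of dimension $2k-1$. Both are also closed simplicial pseudomanifolds with a connected facet--ridge graph: for $\Delta_{2k+3,k}$ this is exactly the content of Lemma~\ref{lemma:flip} (each ridge lies in precisely two facets) and Corollary~\ref{graphflips} (connectivity of the graph of flips). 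A propagation argument along ridges---each ridge of a facet of $C$ is a ridge of $\Delta_{2k+3,k}$ lying in two facets, both of which must again be facets of $C$---then forces the inclusion $\mathrm{facets}(C)\subseteq\mathrm{facets}(\Delta_{2k+3,k})$ to be an equality. Hence the two Stanley--Reisner complexes coincide and $\Delta_{2k+3,k}\cong\partial C(2k+3,2k)$.

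The main obstacle is precisely this last step: going from ``the evident non-faces agree'' to ``the complexes are equal''. Verifying the alternating sets on each side is routine, but excluding extra minimal non-faces of the cyclic polytope (equivalently, showing no additional facets appear) is the delicate point, and the cleanest way to avoid an explicit facet count is the pseudomanifold/flip-connectivity argument above---which is exactly why it is convenient that the results of Section~\ref{sectionflips} are already in hand. A secondary point requiring care is the bookkeeping that aligns the cyclic order coming from the non-crossing cycle with the moment-curve order used in Gale's condition.
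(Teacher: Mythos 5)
Your proof is correct, but it takes a genuinely different route from the paper's. The paper's proof is a direct facet identification: it observes, as you do, that the $k$-relevant edges of the $(2k+3)$-gon form a cycle of length $2k+3$ (a $(2k+3)$-star), and then simply asserts that the sets of $k$-relevant edges of $k$-triangulations are exactly the subsets of this cycle satisfying Gale's evenness condition, so that facets correspond to facets of the cyclic polytope; no flip machinery is used. You instead match the two complexes at the level of minimal non-faces --- the $(k+1)$-crossings are the maximum independent sets of the cycle, i.e.\ the $2k+3$ ``alternating'' $(k+1)$-subsets, and your parity argument on Gale triples, combined with the cyclic symmetry of even-dimensional cyclic polytopes and $k$-neighborliness, shows these alternating sets are also minimal non-faces of $\partial C(2k+3,2k)$ --- and since this only yields that every face of $\partial C(2k+3,2k)$ is a face of $\Delta_{2k+3,k}$, you force equality by a pseudomanifold propagation resting on Lemma~\ref{lemma:flip} (each ridge of $\Delta_{2k+3,k}$ lies in exactly two facets) and Corollary~\ref{graphflips} (connectivity of the flip graph). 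What the paper's route buys is brevity: for a $2k$-subset of a $(2k+3)$-set the Gale condition involves only a triple of gaps, and comparing it with $(k+1)$-crossing-freeness is a short computation left to the reader. What your route buys is that the facet correspondence is never verified directly: the only explicit computation is the parity check for a single alternating set, and the rest is general structure (purity, the pseudomanifold property, dual-graph connectivity); the price is that the lemma now depends on the results of Section~\ref{sectionflips}, which the paper's one-line argument does not. Two small points: the phrase ``no facet avoids $N$'' should read ``no facet contains $N$'' (what your argument actually shows is that no Gale triple lies inside the complement of $N$, which is the right statement); and you silently corrected the statement's typo, since the cyclic polytope in question has $2k+3$ vertices, not $2k-3$.
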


\begin{proof}
The set of $k$-relevant edges of the $(2k+3)$-gon forms a cycle $C$ of length $2k+3$ (in fact, it forms a $(2k+3)$-star). It is easy to check that the sets of $k$-relevant edges of $k$-triangulations of the $(2k+3)$-gon are exactly the subsets of $C$ that satisfy Gale's evenness condition, and thus correspond to facets of the cyclic polytope (see Theorem 0.7 of~\cite{z-lp-95}).
\end{proof}
\end{itemize}

In our opinion, $k$-stars are a promising tool to answer this question: first because they give a better understanding of flips in $k$-triangulations and some constructions of the associahedron are based on understanding flips; but moreover because one of the most natural ways of constructing the associahedron is as a secondary polytope, which obviously uses triangles~\cite{bfs-ccsp-90, s-gbfscc-06}.

Another possible way of constructing this polytope could be based on rigity of $k$-triangulations, mimicking the construction in~\cite{rss-empppt-03} (see also~\cite{rss-pt-06}).

\bigskip

\paragraph{{\sc Surfaces}}

Regarding a $k$-triangulation $T$ of the $n$-gon as a complex of $k$-stars naturally defines a polygonal complex $\mathcal{C}(T)$ as follows:
\begin{enumerate}[(i)]
\item the vertices of $\mathcal{C}(T)$ are the vertices of the $n$-gon;
\item the edges of $\mathcal{C}(T)$ are the $k$-boundary edges and $k$-relevant edges of $T$;
\item the facets of $\mathcal{C}(T)$ are the $k$-stars of $T$, considered as $(2k+1)$-gons.
\end{enumerate}
Since every $k$-relevant edge belongs to two $k$-stars and every $k$-boundary edge belongs to one, this complex is a surface with boundary, with $\gcd(n,k)$ boundary components. Also, it is orientable since the natural orientation of each $k$-star can be inherited on each polygon. Hence, from its Euler characteristic, we derive its genus:
$$g_{n,k}=\frac{1}{2}(2-f+e-v-b)=\frac{1}{2}(2-n+k+kn-2k^2-\gcd(n,k)).$$
That is, the surface does not depend on the $k$-triangulation $T$ but only on $n$ and $k$. We denote $\mathcal{S}_{n,k}$ this surface. The polygonal complex $\mathcal{C}(T)$ of each $k$-triangulation provides a polygonal decomposition of $\mathcal{S}_{n,k}$.

Figure~\ref{surfaces} shows the surfaces associated to the $2$-triangulations $T_{n,2}^{\min}$ for $n=6,7$ and $8$.

\begin{figure}
\centerline{\includegraphics[scale=.7]{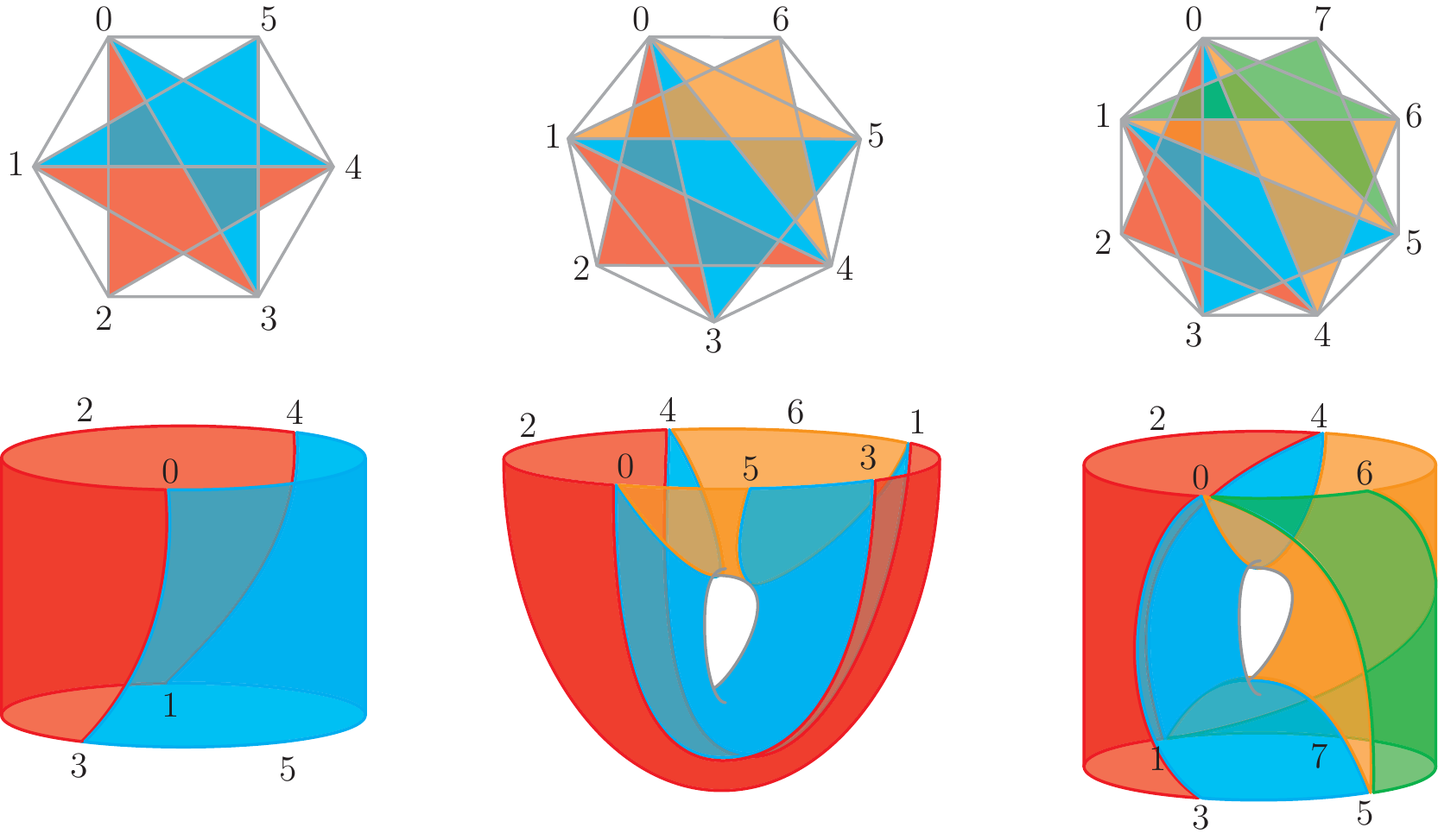}}
\caption{\small{Examples of decompositions of surfaces associated to the $2$-trangulations $T_{6,2}^{\min}$, $T_{7,2}^{\min}$ and $T_{8,2}^{\min}$.}}\label{surfaces}
\end{figure}

Apart from its beauty, this interpretation of $k$-triangulations as poly\-gonal decompositions of surfaces is interesting for the following reason.

Let $T$ be a $k$-triangulation of the $n$-gon, and let $f$ be a $k$-relevant edge of $T$. Let $R$ and $S$ be the two $k$-stars of $T$ containing $f$, and let $e$ be the common bisector of $R$ and $S$. Let $X$ and $Y$ be the two $k$-stars of $T\triangle\{e,f\}$ containing $e$.

Then $T\setminus\{f\}$ can be viewed as a decomposition of $\mathcal{S}_{n,k}$ into $n-2k-2$ $(2k+1)$-gons and one $4k$-gon, obtained from $\mathcal{C}(T)$ by gluing the two $(2k+1)$-gons $R$ and $S$ along the edge $f$. And then $T\triangle\{e,f\}$ is obtained from $T\setminus\{f\}$ by splitting this $4k$-gon into the two $(2k+1)$-gons $X$ and $Y$.

As an example, in Figure~\ref{monodromy} we have drawn the decomposition of the cylinder corresponding to the $2$-triangulation $T_{6,2}^{\min}$. The second $2$-triangulation is obtained from $T_{6,2}^{\min}$ by flipping the edge $[1,4]$, and we have represented the decomposition of the cylinder obtained by flipping the edge on the surface. If we now flip $[0,3]$ and then $[2,5]$, we obtain again the triangulation $T_{6,2}^{\min}$. 

\begin{figure}
\centerline{\includegraphics[scale=.7]{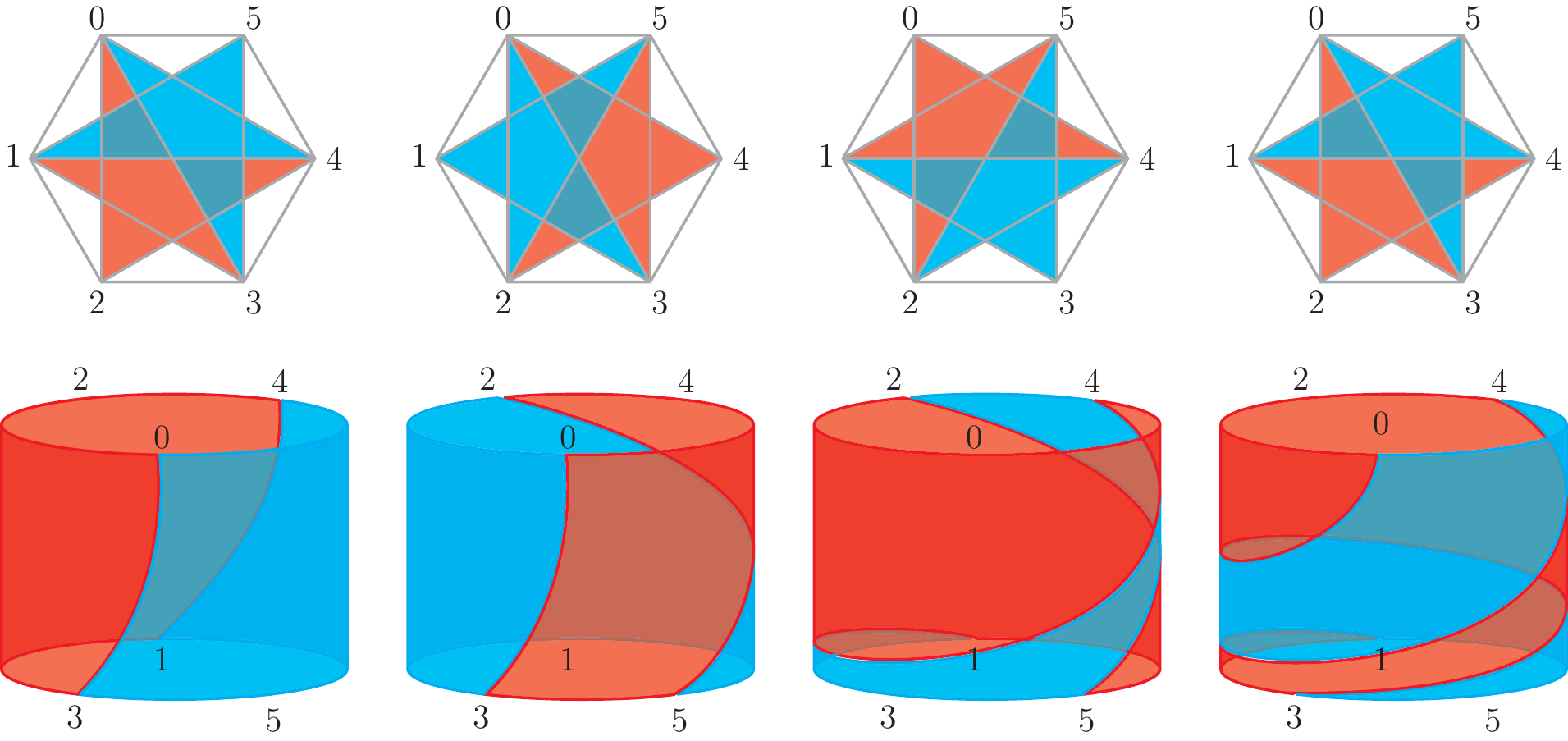}}
\caption{\small{A cycle of flips.}}\label{monodromy}
\end{figure}

Observe that although the corresponding decomposition is combinatorially the same as in the first picture, it has been ``twisted" on the surface. This phenomenon can be interpreted as a homomorphism between
\begin{enumerate}[(i)]
\item the \emph{fundamental group} $\pi_{n,k}$ of the graph of flips $G_{n,k}$ ({\it i.e.}~the set of closed walks in $G_{n,k}$, based at an initial $k$-triangulation, considered up to homotopy), and
\item the \emph{mapping class group} $\mathcal{M}_{n,k}$ of the surface $\mathcal{S}_{n,k}$ ({\it i.e.}~the set of diffeomorphisms of the surface $\mathcal{S}_{n,k}$ into itself that preserve the orientation and that fix the boundary of $\mathcal{S}_{n,k}$, considered up to isotopy~\cite{b-blmcg-74,i-mcg-02}).
\end{enumerate}
It may be interesting to understand the image and the kernel of this homomorphism. In particular, if this homomorphism is surjective, this interpretation provides a new combinatorial description of the mapping class group of $\mathcal{S}_{n,k}$.

This homomorphism is actually a \emph{monodromy action}. Let $\mathcal{G}_{n,k}$ denote the set of embeddings $f:T\to \mathcal{S}_{n,k}$, where $T$ ranges over all $k$-triangulations of the $n$-gon, and embeddings are considered modulo isotopy. We make
$\mathcal{G}_{n,k}$ into a graph, lifting flips between $k$-triangulations as in the example above. Then, the ``forgetful map'' is a covering map
$
\mathcal{G}_{n,k} \to G_{n,k}.
$
The mapping class group is the group of deck transformations of this cover and the above homomorphism is the corresponding monodromy action.

\bigskip

\paragraph{{\sc Chirotopes.}}

Let $T$ be a $k$-triangulation and let $S$ be a $k$-star of $T$. 
If we consider our $n$-gon with vertices in the unit circle $C\subset \mathbb{R}^2$, then lines that intersect $C$ form a M\"obius strip in the dual $\mathbb{R}P^2$. It is easy to check that the set of all geometric bisectors of a $k$-star $S$ (including the edges of $S$ as limit cases) form a \emph{pseudoline} $\ell_S$ in the M\"obius strip. Furthermore, as Michel Pocchiola pointed out to us, Theorem~\ref{common bisector} affirms that the two pseudolines corresponding to two $k$-stars of $T$ intersect exactly once. Thus, the set of pseudolines associated to the $k$-stars of $T$ is an arrangement of pseudolines in $\mathbb{R}P^2$. In particular, it defines a \emph{chirotope} or oriented matroid (of rank $3$) $\chi_T$ on the set of $k$-stars of $T$~\cite{blvswz-om-92,k-ah-92,rgz-om-04}. This is an analogue to the chirotope (or pseudo-line arrangement) that Pocchiola and Vegter introduce for pseudo-triangulations~\cite{pv-ot-94,pv-vc-96,rss-pt-06}.

We can also define the chirotope directly (that is, in the ``primal space") as follows. Let $S,S'$ and $S''$ be three $k$-stars of $T$. Let $u$ and $v$ be vertices of $S$ and $S'$ respectively such that $[u,v]$ is the unique common bisector of $S$ and $S'$. We define $\chi_T(S,S',S'')$ to be negative if $S''$ has more vertices in $\rrbracket u,v\llbracket$ than in $\rrbracket v,u\llbracket$, and positive in the other case (it is easy to see that $S''$ can not have has many vertices in $\rrbracket u,v\llbracket$ and in $\rrbracket v,u\llbracket$).

This chirotope provides a new interpretation of some of the results in this paper. For example, external $k$-stars ({\it i.e.}~$k$-star containing at least one $k$-boundary edge) are exactly the elements on the convex hull of this chirotope. Thus, Corollary~\ref{earsenumeration} of Section~\ref{sectionears} can be rewritten as: the number of $k$-ears of a $k$-triangulation $T$ of the $n$-gon is $n$ minus the number of $k$-stars of the convex hull of $\chi_T$. Remember also that external $k$-stars where relevant in Section~\ref{sectionflatinflat}. More technical details and further results on this topic will be given in a subsequent paper.


\section*{Acknowledgement}

We would like to thank Olivier Bernardi, Sergi Elizalde, Carsten Lange, Julian Pfeifle, Michel Pocchiola and Martin Rubey for interesting discussions and input, especially on the topics of Section~\ref{sectionopen}.

We also would like to thank the \emph{Centre de Recerca Matem\`atica} (Bar\-celona, Spain) for inviting both of us to stay there in January 2007, when most of these discussions took place.

\bibliographystyle{alpha}
\bibliography{multi-triangulations.bib}

\end{document}